\documentclass[final,leqno,onefignum,onetabnum]{siamltexmm}
\usepackage{amsmath}
\usepackage{amsbsy}
\usepackage{amssymb}
\usepackage[dvips]{graphicx}
\usepackage{arydshln}
\newtheorem{assumption}[theorem]{Assumption}

\usepackage{geometry}
\usepackage{enumerate}
\usepackage{autonum}
\usepackage{scalerel}
\usepackage{threeparttable}
\DeclareMathOperator*{\bigtimes}{\scalerel*{\times}{\sum}}
\newgeometry{left=0.75in,right=0.75in,top=1in,bottom=0.7in}

%
\title{A Spline Dimensional Decomposition for Uncertainty Quantification in High Dimensions
\thanks{This work was supported by the U.S. National Science Foundation under Grant Nos. CMMI-1607398 and CMMI-1933114.}}

\author{Sharif Rahman \and Ramin Jahanbin
\thanks{College of Engineering and Applied Mathematics \& Computational Sciences, The University of Iowa, Iowa City, IA 52242 (\email{sharif-rahman@uiowa.edu}). Questions, comments, or corrections
to this document may be directed to that email address.}}

\begin{document}

\maketitle
\newcommand{\slugmaster}{%
\slugger{juq}{xxxx}{xx}{x}{x--x}}

\begin{abstract}
This study debuts a new spline dimensional decomposition (SDD) for uncertainty quantification analysis of high-dimensional functions, including those endowed with high nonlinearity and nonsmoothness, if they exist, in a proficient manner. The decomposition creates an hierarchical expansion for an output random variable of interest with respect to measure-consistent orthonormalized basis splines (B-splines) in independent input random variables.  A dimensionwise decomposition of a spline space into orthogonal subspaces, each spanned by a reduced set of such orthonormal splines, results in SDD.  Exploiting the modulus of smoothness, the SDD approximation is shown to converge in mean-square to the correct limit.  The computational complexity of the SDD method is polynomial, as opposed to exponential, thus alleviating the curse of dimensionality to the extent possible. Analytical formulae are proposed to calculate the second-moment properties of a truncated SDD approximation for a general output random variable in terms of the expansion coefficients involved. Numerical results indicate that a low-order SDD approximation of nonsmooth functions calculates the probabilistic characteristics of an output variable with an accuracy matching or surpassing those obtained by high-order approximations from several existing methods. Finally, a 34-dimensional random eigenvalue analysis demonstrates the utility of SDD in solving practical problems.
\end{abstract}

\begin{keywords}
Polynomial chaos expansion, polynomial dimensional decomposition, sparse grids, spline chaos expansion.
\end{keywords}

\pagestyle{myheadings}
\thispagestyle{plain}
\markboth{S. RAHMAN}{A SPLINE DIMENSIONAL DECOMPOSITION}

\section{Introduction}
Uncertainty quantification, commonly referred to as UQ, is concerned with characterizing, propagating, and managing statistical variability in computer models of complex systems \cite{smith13,sullivan15}.  It is usually performed in conjunction with stochastic methods that estimate the statistical moments, probability law, and other relevant properties of an output variable of interest.  A family of popular methods, namely, the polynomial chaos expansion (PCE) \cite{cameron47,wiener38} and polynomial dimensional decomposition (PDD) methods \cite{rahman18}, is premised on the smoothness property of the output function because the polynomial basis of the expansion or decomposition is also smooth and globally supported.  Although these polynomial-based methods have played a central role in UQ for many years, their main drawback is inflexibility. While PCE and PDD seem to do all right with sufficiently low expansion orders or degrees
\footnote{The nouns \emph{degree} and \emph{order} are used synonymously in this paper when referring to a polynomial or spline expansion.}
meant for smooth functions, they are inflicted with uncontrolled fluctuations if the expansion order is excessively high, causing unreliable predictions of stochastic performance for nonsmooth functions. This observation suggests that in order to attain better approximation quality for nonsmooth functions on a large domain, one should work with smoothly connected piecewise polynomials, \emph{a.k.a} splines, of relatively low expansion orders and smaller subdomains.

Recently, a spline chaos expansion (SCE), comprising orthonormalized basis splines (B-splines) in input random variables, has been developed to tackle locally pronounced, highly nonlinear, or nonsmooth functions \cite{rahman20}.  The expansion is similar to PCE, but, by swapping polynomials for B-splines, SCE achieves a greater flexibility in selecting expansion orders and dealing with subdomains.  In consequence, a low-degree SCE approximation with an adequate mesh size produces a markedly more accurate estimate of the output variance of a nonsmooth function than the commonly used PCE with overly large expansion orders.  However, due to the tensor-product structure, SCE, like PCE, suffers from the curse of dimensionality.  The curse becomes worse for SCE as there are substantially more basis functions in SCE than in PCE.  Since PDD, equipped with a desirable dimensional hierarchy of input variables, is able to deflate PCE's curse of dimensionality to the extent possible, there is a cautious optimism that the same feat can be repeated in the context of SCE.  This is the chief motivation for this work.

The B-splines have been exploited to build the sparse-grid methods \cite{gerstner98,smolyak63}, but they are not orthogonal with respect to the probability measure of input random variables.  More often than not, the resultant approximations are only $C^0$-continuous at element boundaries, as is also the case for the multi-element collocation method \cite{foo08}. More notably, these methods are entrenched in the referential dimensional decomposition (RDD) \cite{rahman14}, also known as anchored decomposition \cite{griebel10} or cut-HDMR \cite{rabitz99}, of a high-dimensional function.  In contrast, the method developed in this work is founded on the analysis-of-variance (ANOVA) dimensional decomposition (ADD), which is generally superior to RDD. Indeed, an error analysis reveals sub-optimality of RDD approximations, indicating that an RDD approximation, regardless of how the reference point is selected, cannot be better than an ADD approximation for the same degrees of interaction \cite{rahman14}.

This paper presents a new, alternative dimensionwise orthogonal expansion, referred to as spline dimensional decomposition or SDD, for  a general UQ analysis of high-dimensional functions, including those featuring nonsmoothness, subject to independent but otherwise arbitrary probability measures of input random variables. It focuses on the fundamentals and mathematical aspects of SDD, followed by a few applications. Readers interested in further applications to isogeometric analysis and latter developments of an optimal version should consult the companion papers \cite{dixler21,jahanbin20}. The paper is organized as follows.  Section 2 starts with mathematical preliminaries and assumptions.  A brief account of univariate B-splines, including their orthonormalized version, is presented in Section 3. Section 4 explains the construction of dimensionwise multivariate B-splines for generating an orthonormal basis of a spline space of interest.  Section 5 properly introduces SDD for a square-integrable random variable and then affirms the convergence and optimality of SDD.  The formulae for the mean and variance of an SDD approximation are also deduced.  Numerical results from two example problems are discussed in Section 6. Section 7 demonstrates the power of the SDD method by solving a large-scale engineering problem from the automotive industry. Finally, the conclusions are drawn in Section 8.

\section{Input random variables}
Let $\mathbb{N}:=\{1,2,\ldots\}$, $\mathbb{N}_{0}:=\mathbb{N} \cup \{0\}$, and $\mathbb{R}:=(-\infty,+\infty)$ represent the sets of positive integer (natural), non-negative integer, and real numbers, respectively.  Denote by $[a_k,b_k]$ a finite closed interval, where $a_k, b_k \in \mathbb{R}$, $b_k>a_k$. Then, given $N \in \mathbb{N}$, $\mathbb{A}^N=\times_{k=1}^N [a_k,b_k]$ represents a closed bounded domain of $\mathbb{R}^N$.

Let $(\Omega,\mathcal{F},\mathbb{P})$ be a probability space, where $\Omega$ is a sample space representing an abstract set of elementary events, $\mathcal{F}$ is a $\sigma$-algebra on $\Omega$, and $\mathbb{P}:\mathcal{F}\to[0,1]$ is a probability measure.  Defined on this probability space, consider an $N$-dimensional input random vector $\mathbf{X}:=(X_{1},\ldots,X_{N})^\intercal$, describing the statistical uncertainties in all system parameters of a stochastic or UQ problem.  Denote by $F_{\mathbf{X}}({\mathbf{x}}):=\mathbb{P}(\cap_{i=1}^{N}\{ X_k \le x_k \})$ the joint distribution function of $\mathbf{X}$. The $k$th component of $\mathbf{X}$ is a random variable $X_k$, which has the marginal probability distribution function $F_{X_k}(x_k):=\mathbb{P}(X_k \le x_k)$.  The non-zero, finite integer $N$ represents the number of input random variables and is often referred to as the dimension of the stochastic or UQ problem. Albeit subjective, here, the stochastic dimension exceeding ten is considered to be a high-dimensional UQ problem.

A set of assumptions on input random variables used or required by SDD is as follows.

\begin{assumption}
The input random vector $\mathbf{X}:=(X_{1},\ldots,X_{N})^\intercal$ satisfies all of the following conditions:
\begin{enumerate}[{$(1)$}]

\item
All component random variables $X_k$, $k=1,\ldots,N$, are statistically independent, but not necessarily identically distributed.

\item
Each input random variable $X_k$ is defined on a bounded interval $[a_k,b_k] \subset \mathbb{R}$.  Therefore, all moments of $X_k$ exists, that is, for all $l \in \mathbb{N}_0$,
\begin{equation}
\mathbb{E} \left[ X_k^l \right] :=
\int_{\Omega} X_k^l(\omega) d\mathbb{P}(\omega) < \infty,
\label{2.1}
\end{equation}
where $\mathbb{E}$ is the expectation operator with respect to the probability measure $\mathbb{P}$.

\item
Each input random variable $X_k$ has absolutely continuous marginal probability distribution function $F_{X_k}(x_k)$ and continuous marginal probability density function $f_{X_k}(x_k):={\partial F_{X_k}(x_k)}/{\partial x_k}$ with a bounded support $[a_k,b_k] \subset \mathbb{R}$.  Consequently, with Items (1) and (2) in mind, the joint probability distribution function $F_{\mathbf{X}}({\mathbf{x}})$ and joint probability density function $f_{\mathbf{X}}({\mathbf{x}}):={\partial^N F_{\mathbf{X}}({\mathbf{x}})}/{\partial x_1 \cdots \partial x_N}$ of $\mathbf{X}$ are obtained from
\[
F_{\mathbf{X}}({\mathbf{x}})=\prod_{k=1}^{N} F_{X_k}(x_k)~~\text{and}~~
f_{\mathbf{X}}({\mathbf{x}})=\prod_{k=1}^{N} f_{X_k}(x_k),
\]
respectively, with a bounded support $\mathbb{A}^N \subset \mathbb{R}^N$ of the density function.

\end{enumerate}
\label{a1}
\end{assumption}

Assumption \ref{a1} warrants the existence of a relevant sequence of orthogonal polynomials or splines consistent with the input probability measure. The discrete distributions and dependent variables are not considered in the paper.

Given the abstract probability space $(\Omega,\mathcal{F},\mathbb{P})$ of  $\mathbf{X}$, there is an image probability space $(\mathbb{A}^N,\mathcal{B}^{N},f_{\mathbf{X}}d\mathbf{x})$, where $\mathbb{A}^N$ is the image of $\Omega$ from the mapping $\mathbf{X}:\Omega \to \mathbb{A}^N$ and $\mathcal{B}^{N}:=\mathcal{B}(\mathbb{A}^{N})$ is the Borel $\sigma$-algebra on $\mathbb{A}^N \subset \mathbb{R}^N$.  Appropriate statements and objects in the abstract probability space have obvious counterparts in the associated image probability space. Both probability spaces will be used in the paper.

\section{Univariate Orthonormal Splines}
Let $\mathbf{x}=(x_1,\ldots,x_N)$ be an arbitrary point in $\mathbb{A}^N$.  For the coordinate direction $k$, $k=1,\ldots,N$, define a non-negative integer $p_k \in \mathbb{N}_0$ and a positive integer $n_k \ge (p_k+1) \in \mathbb{N}$, representing the degree and total number of basis functions, respectively. Then, a knot sequence
\begin{equation}
\begin{array}{c}
\boldsymbol{\xi}_k:={\{\xi_{k,i_k}\}}_{i_k=1}^{n_k+p_k+1}=
\{a_k=\xi_{k,1},\xi_{k,2},\ldots,\xi_{k,n_k+p_k+1}=b_k\},  \\
\xi_{k,1} \le \xi_{k,2} \le \cdots \le \xi_{k,n_k+p_k+1},
\end{array}
\label{3.1}
\end{equation}
is defined on the interval $[a_k,b_k] \subset \mathbb{R}$ by a non-decreasing sequence of real numbers, where  $\xi_{k,i_k}$ is the $i_k$th knot with $i_k=1,2,\ldots,n_k+p_k+1$ representing the knot index for the coordinate direction $k$. A knot $\xi_{k,i_k}$ may appear up to $p_k+1$ times. Therefore, the knot sequence can also be written as
\begin{equation}
\begin{array}{c}
\boldsymbol{\xi}_k=\{a_k=\overset{m_{k,1}~\mathrm{times}}{\overbrace{\zeta_{k,1},\ldots,\zeta_{k,1}}},
\overset{m_{k,2}~\mathrm{times}}{\overbrace{\zeta_{k,2},\ldots,\zeta_{k,2}}},\ldots,
\overset{m_{k,r_{k}-1}~\mathrm{times}}{\overbrace{\zeta_{k,r_{k}-1},\ldots,\zeta_{k,r_{k}-1}}},
\overset{m_{k,r_{k}}~\mathrm{times}}{\overbrace{\zeta_{k,r_{k}},\ldots,\zeta_{k,r_{k}}}}=b_k\}, \\
a_k=\zeta_{k,1} < \zeta_{k,2} < \cdots < \zeta_{k,r_k-1} < \zeta_{k,r_k}=b_k,  \rule{0pt}{0.2in}
\end{array}
\label{3.2}
\end{equation}
where $\zeta_{k,j_k}$, $j_k=1,\ldots,r_k$, are $r_k$ unique knots, each of which has multiplicity $1 \le m_{k,j_k} \le p_k+1$. A knot sequence is said to be $(p_k+1)$-open if the first and last knots appear $p_k+1$ times. Furthermore, a knot sequence is said to be $(p_k+1)$-open with simple knots if it is $(p_k+1)$-open and all interior knots appear only once. A $(p_k+1)$-open knot sequence with or without simple knots is commonly found in applications \cite{cottrell09}. For further details, read Appendix A of this paper or Chapter 2 of the book by Cottrell \emph{et al.} \cite{cottrell09}.

\subsection{Standard B-splines}
Let $\boldsymbol{\xi}_k$ be a general knot sequence of length at least $p_k+2$ for the interval $[a_k,b_k]$,
as defined by \eqref{3.1}. Denote by $B_{i_k,p_k,\boldsymbol{\xi}_k}^k(x_k)$ the $i_k$th univariate B-spline function with degree $p_k \in \mathbb{N}_0$ for the coordinate direction $k$. Given the zero-degree basis functions,
\begin{equation}
B_{i_k,0,\boldsymbol{\xi}_k}^k(x_k) :=
\begin{cases}
1, & \xi_{k,i_k} \le x_k < \xi_{k,i_k+1}, \\
0, & \text{otherwise},
\end{cases}
\label{3.5}
\end{equation}
for $k=1,\ldots,N$, all higher-order B-spline functions on $\mathbb{R}$ are defined recursively by \cite{cox72,deboor72,piegl97}
\begin{equation}
B_{i_k,p_k,\boldsymbol{\xi}_k}^k(x_k) :=
\frac{x_k - \xi_{k,i_k}}{\xi_{k,i_k+p_k} - \xi_{k,i_k}} B_{i_k,p_k-1,\boldsymbol{\xi}_k}^k(x_k) +
\frac{\xi_{k,i_k+p_k+1}-x_k}{\xi_{k,i_k+p_k+1}-\xi_{k,i_k+1}} B_{i_k+1,p_k-1,\boldsymbol{\xi}_k}^k(x_k),
\label{3.6}
\end{equation}
where $1 \le k \le N$, $1 \le i_k \le n_k$, $1 \le p_k < \infty$, and $0/0$ is considered as \emph{zero}.

The B-spline functions are endowed with a number of desirable properties \cite{cox72,deboor72,piegl97}.  They are non-negative, locally supported on a subinterval, linearly independent, and committed to partition of unity \cite{cottrell09}. A B-spline is also everywhere pointwise $C^\infty$-continuous except at the knots $\zeta_{k,i_k}$ of multiplicity $m_{k,i_k}$, where it is $C^{p_k-m_{k,i_k}}$-continuous, provided that $1 \le m_{k,i_k} < p_k+1$.  Some of these properties will be exploited in the development of SDD.

\subsection{Spline space}
Suppose for $n_k > p_k \ge 0$, a knot sequence $\boldsymbol{\xi}_k$ has been specified on the interval $[a_k,b_k]$.  The associated spline space of degree $p_k$, denoted by $\mathcal{S}_{k,p_k,\boldsymbol{\xi}_k}$, is conveniently defined using an appropriate polynomial space. Define such a polynomial space as a finite-dimensional linear space
\[
\Pi_{p_k}:=
\left\{
g(x_k) =
\displaystyle
\sum_{l=0}^{p_k} c_{k,l} x_k^l: c_{k,l} \in \mathbb{R}
\right\}
\]
of real-valued polynomials in $x_k$ of degree at most $p_k$.

\begin{definition}[Schumaker \cite{schumaker07}]
For $n_k > p_k \ge 0$, let $\boldsymbol{\xi}_k$ be a
$(p_k+1)$-open knot sequence on the interval $[a_k,b_k]$, as defined by \eqref{3.2} with $m_{k,1}=m_{k,r_k}=p_k+1$. Then the space
\begin{equation}
\mathcal{S}_{k,p_k,\boldsymbol{\xi}_k} :=
\left \{
\begin{array}{l}
g_k:[a_k,b_k] \to \mathbb{R}: ~\text{there exist polynomials}~g_{k,1},g_{k,2},\ldots,g_{k,r_k-1} ~\text{in}~ \Pi_{p_k}
\\
\text{such that}~g_k(x_k)=g_{k,i_k}(x_k)~\text{for}~x_k \in [\xi_{k,i_k},\xi_{k,i_k+1}),~i_k=1,\ldots,r_k-1, \\
\text{and}~
\displaystyle
\frac{\partial^{j_k} g_{k,i_k-1}}{\partial x_k}(\xi_{k,i_k}) =
\displaystyle
\frac{\partial^{j_k} g_{k,i_k}}{\partial x_k}(\xi_{k,i_k})~\text{for}~j_k=0,1,\ldots,p_k-m_{k,i_k},
\\
i_k=2,\ldots,r_k-1
\end{array}
\right\}
\label{3.7}
\end{equation}
is defined as the spline space of degree $p_k$ with distinct knots $\zeta_{k,1},\ldots,\zeta_{k,r_k}$ of multiplicities $m_{k,1}=p_k+1$, $1 \le m_{k,2} \le p_k+1$, $\ldots$, $1 \le m_{k,r_k-1} \le p_k+1$, $m_{k,r_k}=p_k+1$.
\label{d5}
\end{definition}

The spline space is uniquely described by distinct interior knots $\zeta_{k,2},\ldots,\zeta_{k,r_k-1}$ of multiplicities $m_{k,2},\ldots,m_{k,r_k-1}$.  Indeed, the multiplicities determine the structure of $\mathcal{S}_{k,p_k,\boldsymbol{\xi}_k}$ by regulating the smoothness of the splines at interior knots.  For instance, if $m_{k,i_k}=p_k+1$, $i_k=2,\ldots,r_k-1$, then two polynomial pieces $g_{k,i_k-1}$ and $g_{k,i_k}$ in the sub-intervals associated with the knot $\xi_{k,i_k}$ are unattached, possibly creating a jump discontinuity at $\xi_{k,i_k}$.  In this case, $\mathcal{S}_{k,p_k,\boldsymbol{\xi}_k}$ will be the roughest space of splines.  If $m_{k,i_k}<p_k+1$, $i_k=2,\ldots,r_k-1$, then the two aforementioned polynomial pieces are joined smoothly in the sense that the first $p_k-m_{k,i_k}$ derivatives are all continuous across the knot. More specifically, if $m_{k,i_k}=1$, $i_k=2,\ldots,r_k-1$, then there are simple knots with the corresponding spline space becoming the smoothest space of piecewise polynomials of degree at most $p_k$.

\begin{proposition}
The spline space $\mathcal{S}_{k,p_k,\boldsymbol{\xi}_k}$ is a linear space of dimension
\begin{equation}
\dim \mathcal{S}_{k,p_k,\boldsymbol{\xi}_k} = n_k = \displaystyle \sum_{i_k=2}^{r_k-1} m_{k,i_k} + p_k +1.
\label{3.8}
\end{equation}
\label{p1}
\end{proposition}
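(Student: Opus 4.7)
The plan is to first verify that $\mathcal{S}_{k,p_k,\boldsymbol{\xi}_k}$ is a linear space, and then compute its dimension by a constraint-counting argument on piecewise polynomials.

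First, I would observe linearity. The zero function trivially satisfies Definition \ref{d5}. If $g_k, h_k \in \mathcal{S}_{k,p_k,\boldsymbol{\xi}_k}$ with associated polynomial pieces $g_{k,i_k}, h_{k,i_k} \in \Pi_{p_k}$, then for any scalars $\alpha,\beta \in \mathbb{R}$ the piecewise function $\alpha g_k + \beta h_k$ has polynomial pieces $\alpha g_{k,i_k} + \beta h_{k,i_k} \in \Pi_{p_k}$, and the derivative-matching conditions in \eqref{3.7} carry over by linearity of differentiation. Hence $\mathcal{S}_{k,p_k,\boldsymbol{\xi}_k}$ is a real linear space.

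Next, I would count dimensions. Before imposing any continuity, a generic element of $\mathcal{S}_{k,p_k,\boldsymbol{\xi}_k}$ is specified by the $r_k-1$ polynomial pieces $g_{k,1},\ldots,g_{k,r_k-1}$, each belonging to $\Pi_{p_k}$, which is $(p_k+1)$-dimensional. This yields $(r_k-1)(p_k+1)$ free parameters. At each interior knot $\zeta_{k,i_k}$ with $i_k=2,\ldots,r_k-1$, Definition \ref{d5} imposes matching of derivatives of orders $j_k=0,1,\ldots,p_k-m_{k,i_k}$, supplying $p_k+1-m_{k,i_k}$ independent linear constraints on the coefficients of $g_{k,i_k-1}$ and $g_{k,i_k}$. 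Subtracting gives
\[
\dim \mathcal{S}_{k,p_k,\boldsymbol{\xi}_k} = (r_k-1)(p_k+1) - \sum_{i_k=2}^{r_k-1}(p_k+1-m_{k,i_k}) = (p_k+1) + \sum_{i_k=2}^{r_k-1} m_{k,i_k},
\]
which is the stated formula.

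To close, I would verify that this common value equals $n_k$ using the length of the knot sequence. From \eqref{3.1} the multiset $\boldsymbol{\xi}_k$ has cardinality $n_k+p_k+1$, while \eqref{3.2} expresses this cardinality as $\sum_{i_k=1}^{r_k} m_{k,i_k}$ with $m_{k,1}=m_{k,r_k}=p_k+1$. Hence $\sum_{i_k=2}^{r_k-1} m_{k,i_k} = n_k-p_k-1$, so the right-hand side of \eqref{3.8} indeed reduces to $n_k$. The main subtlety — not really an obstacle, but the step most worth care — is ensuring that the derivative-matching conditions at different interior knots act on disjoint pairs of pieces and hence are genuinely independent, so that no inclusion-exclusion correction is needed in the constraint count; this is immediate because each condition at $\zeta_{k,i_k}$ constrains only coefficients of $g_{k,i_k-1}$ and $g_{k,i_k}$, and every polynomial piece has $p_k+1$ coefficients available to absorb at most $p_k+1$ constraints. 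Alternatively, one can invoke the Curry--Schoenberg theorem, by which the $n_k$ B-splines $B_{i_k,p_k,\boldsymbol{\xi}_k}^k$ defined in Section 3.1 form a basis of $\mathcal{S}_{k,p_k,\boldsymbol{\xi}_k}$, giving the same conclusion.
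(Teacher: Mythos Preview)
Your argument is correct. The paper, however, does not actually supply a proof of this proposition: it is stated without proof as a standard fact from spline theory, with the surrounding definition attributed to Schumaker \cite{schumaker07}. Your constraint-counting derivation---free parameters $(r_k-1)(p_k+1)$ minus $\sum_{i_k=2}^{r_k-1}(p_k+1-m_{k,i_k})$ independent smoothness conditions---is precisely the classical textbook computation, and your closing check that the result equals $n_k$ via the total knot count $\sum_{j_k=1}^{r_k} m_{k,j_k}=n_k+p_k+1$ with $m_{k,1}=m_{k,r_k}=p_k+1$ is exactly right. The alternative you mention, invoking the Curry--Schoenberg theorem so that the $n_k$ B-splines form a basis, is in fact what the paper leans on implicitly: Proposition~\ref{p2} immediately afterward asserts that the B-splines span $\mathcal{S}_{k,p_k,\boldsymbol{\xi}_k}$, which together with their linear independence gives the dimension directly. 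So your two routes---direct constraint counting and the B-spline basis---both land on the same conclusion, and the paper effectively takes the second for granted.
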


\begin{proposition}
For $n_k > p_k \ge 0$, let $\boldsymbol{\xi}_k$ be a $(p_k+1)$-open knot sequence on the interval $[a_k,b_k]$. Denote by
\begin{equation}
\left\{
B_{1,p_k,\boldsymbol{\xi}_k}^k(x_k),\ldots,B_{n_k,p_k,\boldsymbol{\xi}_k}^k(x_k)
\right\}
\label{3.9}
\end{equation}
a set of $n_k$ B-splines of degree $p_k$. Then
\begin{equation}
\mathcal{S}_{k,p_k,\boldsymbol{\xi}_k} :=
\operatorname{span}\{B_{i_k,p_k,\boldsymbol{\xi}_k}^k(x_k)\}_{i_k=1,\ldots,n_k}.
\label{3.10}
\end{equation}
\label{p2}
\end{proposition}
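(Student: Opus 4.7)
The proposition essentially packages two claims: first, that the recursion \eqref{3.6} applied to the $(p_k+1)$-open knot sequence $\boldsymbol{\xi}_k$ of length $n_k+p_k+1$ produces exactly $n_k$ well-defined B-splines of degree $p_k$; and second, that the linear span of these $n_k$ functions coincides with the spline space $\mathcal{S}_{k,p_k,\boldsymbol{\xi}_k}$ already introduced in Definition \ref{d5}, so that the ``$:=$'' in \eqref{3.10} is a consistent alternative characterization rather than a conflicting definition. My plan is to verify both halves: the count by direct inspection of the recursion, and the span claim by combining a containment step with the dimension count supplied by Proposition \ref{p1}.

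For the count, \eqref{3.6} for $B_{i_k,p_k,\boldsymbol{\xi}_k}^k$ requires the knots $\xi_{k,i_k},\ldots,\xi_{k,i_k+p_k+1}$ to exist in $\boldsymbol{\xi}_k$, so the admissible index range is $i_k=1,\ldots,n_k$, producing exactly the indicated collection. For the containment $\operatorname{span}\{B_{i_k,p_k,\boldsymbol{\xi}_k}^k\}_{i_k=1}^{n_k}\subseteq \mathcal{S}_{k,p_k,\boldsymbol{\xi}_k}$, I would invoke the two structural properties recalled just after \eqref{3.6}: an easy induction on $p_k$ using \eqref{3.6} shows that each $B_{i_k,p_k,\boldsymbol{\xi}_k}^k$ is a polynomial of degree at most $p_k$ on every half-open knot interval $[\xi_{k,j_k},\xi_{k,j_k+1})$, while the stated $C^{p_k-m_{k,j_k}}$-smoothness at each interior knot $\zeta_{k,j_k}$ of multiplicity $m_{k,j_k}$ matches precisely the cross-knot continuity required in \eqref{3.7}. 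Hence every B-spline, and therefore the span, lies inside $\mathcal{S}_{k,p_k,\boldsymbol{\xi}_k}$.

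To upgrade containment to equality, Proposition \ref{p1} gives $\dim \mathcal{S}_{k,p_k,\boldsymbol{\xi}_k}=n_k$, so it suffices to show that the $n_k$ B-splines are linearly independent on $[a_k,b_k]$. This is the principal obstacle. The argument I would use exploits the local support $[\xi_{k,i_k},\xi_{k,i_k+p_k+1}]$ of each $B_{i_k,p_k,\boldsymbol{\xi}_k}^k$: on every nontrivial knot subinterval $[\xi_{k,j_k},\xi_{k,j_k+1})$ only the $p_k+1$ consecutive B-splines with indices $i_k=j_k-p_k,\ldots,j_k$ can be nonzero, so a vanishing combination $\sum_{i_k} c_{i_k} B_{i_k,p_k,\boldsymbol{\xi}_k}^k\equiv 0$ collapses on that subinterval to a linear relation among these $p_k+1$ pieces. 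Invoking Marsden's identity (equivalently, the B-spline reproduction of polynomials) shows that the $p_k+1$ local B-splines form a basis of $\Pi_{p_k}$ on the subinterval, forcing their coefficients to vanish; sweeping across all nontrivial subintervals, and using the $(p_k+1)$-open endpoints to pin down the boundary coefficients $c_1$ and $c_{n_k}$, yields $c_{i_k}=0$ for every $i_k$. Combined with the containment and Proposition \ref{p1}, this establishes the equality $\operatorname{span}\{B_{i_k,p_k,\boldsymbol{\xi}_k}^k\}_{i_k=1}^{n_k}=\mathcal{S}_{k,p_k,\boldsymbol{\xi}_k}$ asserted in \eqref{3.10}.
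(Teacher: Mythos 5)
Your argument is correct, but note that the paper itself offers no proof of this proposition: it is stated as a known fact of spline theory (essentially the Curry--Schoenberg theorem; see Schumaker's book, which the paper cites for Definition \ref{d5} and Proposition \ref{p1}), so there is no ``paper's approach'' to compare against. What you have written is the standard textbook argument, and each step holds up: the index count from the recursion \eqref{3.6} is right; the containment in $\mathcal{S}_{k,p_k,\boldsymbol{\xi}_k}$ follows from the piecewise-polynomial structure and the $C^{p_k-m_{k,j_k}}$ smoothness recalled after \eqref{3.6}, which match the defining conditions in \eqref{3.7} exactly (including the vacuous case $m_{k,j_k}=p_k+1$); and local linear independence via Marsden's identity on each nontrivial knot interval, combined with $\dim\mathcal{S}_{k,p_k,\boldsymbol{\xi}_k}=n_k$ from Proposition \ref{p1}, upgrades containment to equality. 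One small point worth making explicit in the sweep: you need every index $i_k$ to be ``active'' on at least one nontrivial subinterval, which holds because each knot has multiplicity at most $p_k+1$, so the $p_k+2$ knots $\xi_{k,i_k},\ldots,\xi_{k,i_k+p_k+1}$ cannot all coincide and $B_{i_k,p_k,\boldsymbol{\xi}_k}^k$ is strictly positive on the interior of its (nontrivial) support; once that is said, the separate remark about pinning down $c_1$ and $c_{n_k}$ at the open endpoints is redundant rather than wrong.
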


\subsection{Orthonormalized B-splines}
The B-splines presented in the previous subsection, although they form a basis of the spline space $\mathcal{S}_{k,p_k,\boldsymbol{\xi}_k}$, are not orthogonal with respect to the probability measure $f_{X_k}(x_k)dx_k$.  A popular choice for building an orthogonal or orthonormal basis is the well-known Gram-Schmidt procedure \cite{golub96}. However, it is known to be ill-conditioned.  In this section, a linear transformation, originally proposed during the development of SCE \cite{rahman20}, is briefly summarized here in three steps to generate their orthonormal version, as follows.

\vspace{0.1in}
\begin{enumerate}[{$(1)$}]

\item
Given the set of B-splines in \eqref{3.9}, replace any one of its elements with an arbitrary non-zero constant, thus creating an auxiliary set. Without loss of generality, substitute the first element of \eqref{3.9} with 1, producing an $n_k$-dimensional vector
\begin{equation}
\mathbf{P}_k(X_k):=(1,B_{2,p_k,\boldsymbol{\xi}_k}^k(X_k),\ldots,B_{n_k,p_k,\boldsymbol{\xi}_k}^k(X_k))^\intercal
\label{4.1}
\end{equation}
of the elements of the auxiliary set. The auxiliary B-splines in \eqref{4.1} are also linearly independent \cite{rahman20}.

\item
Assemble an $n_k \times n_k$ spline moment matrix
\begin{equation}
\mathbf{G}_k:=\mathbb{E}[\mathbf{P}_k(X_k) \mathbf{P}_k^\intercal(X_k)],
\label{4.5}
\end{equation}
which exists because $X_k$ has finite moments up to order $2 p_k$, as mandated by Assumption \ref{a1}. Furthermore, it is symmetric and positive-definite, ensuring the Cholesky factorization: $\mathbf{G}_k = \mathbf{Q}_k \mathbf{Q}_k^\intercal$, where $\mathbf{Q}_k$ is an $n_k \times n_k$ lower-triangular matrix.

\item
Employ a whitening transformation to generate an $n_k$-dimensional vector of orthonormalized B-splines
\begin{equation}
\boldsymbol{\psi}_k(X_k)= \mathbf{Q}_k \mathbf{P}_k(X_k),
\label{4.7}
\end{equation}
consisting of the components $\psi_{i_k,p_k,\boldsymbol{\xi}_k}^k(X_k)$, $i_k=1,\ldots,n_k$, $k=1,\ldots,N$.

\end{enumerate}

The whitening transformation in \eqref{4.7} is a linear transformation that alters $\mathbf{P}_k(X_k)$ into $\boldsymbol{\psi}_k(X_k)$ in such a way that the latter has uncorrelated random splines.  The transformation is called ``whitening'' because it converts one random vector to the other, which has statistical properties akin to that of a white noise vector \cite{kessy16,rahman18b}.

\begin{proposition}
Given the preamble of Proposition \ref{p2} and linear independence of auxiliary B-splines, the set of elements of $\boldsymbol{\psi}_k(x_k)$ from \eqref{4.7} also spans the spline space $\mathcal{S}_{k,p_k,\boldsymbol{\xi}_k}$, that is,
\begin{equation}
\mathcal{S}_{k,p_k,\boldsymbol{\xi}_k} :=
\operatorname{span}\{\psi_{i_k,p_k,\boldsymbol{\xi}_k}^k(x_k)\}_{i_k=1,\ldots,n_k}.
\label{4.9}
\end{equation}
\label{p5}
\end{proposition}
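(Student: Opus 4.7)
The plan is to chain two span equalities: first $\operatorname{span}\{\mathbf{P}_k\}=\mathcal{S}_{k,p_k,\boldsymbol{\xi}_k}$, and then $\operatorname{span}\{\boldsymbol{\psi}_k\}=\operatorname{span}\{\mathbf{P}_k\}$. For the first, I combine Proposition \ref{p2} with the partition-of-unity property of the B-spline basis. For the second, I argue that the Cholesky factor $\mathbf{Q}_k$ is invertible, so the whitening transformation in \eqref{4.7} carries a basis to a basis.

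More concretely, by Proposition \ref{p2} the B-splines $\{B_{i_k,p_k,\boldsymbol{\xi}_k}^k\}_{i_k=1}^{n_k}$ form a basis of $\mathcal{S}_{k,p_k,\boldsymbol{\xi}_k}$, and, by the partition-of-unity property recalled in Section 3.1, $\sum_{i_k=1}^{n_k}B_{i_k,p_k,\boldsymbol{\xi}_k}^k(x_k)=1$ on $[a_k,b_k]$. This identity gives
\[
B_{1,p_k,\boldsymbol{\xi}_k}^k(x_k)=1-\sum_{i_k=2}^{n_k}B_{i_k,p_k,\boldsymbol{\xi}_k}^k(x_k),
\]
so replacing the first standard B-spline by the constant $1$ does not alter the linear span, yielding $\operatorname{span}\{\mathbf{P}_k\}=\mathcal{S}_{k,p_k,\boldsymbol{\xi}_k}$. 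Combined with the stated linear independence of the $n_k$ components of $\mathbf{P}_k$ (cited from \cite{rahman20}), the entries of $\mathbf{P}_k$ themselves form a basis of $\mathcal{S}_{k,p_k,\boldsymbol{\xi}_k}$.

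It remains to show that the whitening transformation preserves this span. By construction $\mathbf{G}_k$ is symmetric positive definite, so its Cholesky factor $\mathbf{Q}_k$ is lower-triangular with strictly positive diagonal, hence invertible. Writing \eqref{4.7} componentwise,
\[
\psi_{i_k,p_k,\boldsymbol{\xi}_k}^k(x_k)=\sum_{j_k=1}^{n_k}\bigl[\mathbf{Q}_k\bigr]_{i_k j_k}\,\bigl[\mathbf{P}_k(x_k)\bigr]_{j_k},
\]
shows that each $\psi_{i_k,p_k,\boldsymbol{\xi}_k}^k$ lies in $\operatorname{span}\{\mathbf{P}_k\}=\mathcal{S}_{k,p_k,\boldsymbol{\xi}_k}$; conversely, $\mathbf{P}_k=\mathbf{Q}_k^{-1}\boldsymbol{\psi}_k$ expresses every element of $\mathbf{P}_k$ as a linear combination of the $\psi_{i_k,p_k,\boldsymbol{\xi}_k}^k$, yielding the opposite inclusion. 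This establishes \eqref{4.9}.

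The only delicate step is verifying that the auxiliary set $\mathbf{P}_k$ still spans the full spline space after the constant $1$ has been swapped in; this is where the partition-of-unity is essential. Once that equality is in hand, the remainder is purely linear-algebraic, resting on invertibility of the Cholesky factor, which is immediate from positive definiteness of $\mathbf{G}_k$. No new smoothness or measure-theoretic ingredient is required beyond Assumption \ref{a1}, which is already invoked to guarantee existence of $\mathbf{G}_k$ in Section 3.3.
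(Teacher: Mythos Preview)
The paper states Proposition~\ref{p5} without proof, so there is no argument of the authors to compare against. Your proof is correct and supplies exactly the justification one would expect: the partition-of-unity identity (noted among the B-spline properties in Section~3.1) shows that swapping $B_{1,p_k,\boldsymbol{\xi}_k}^k$ for the constant $1$ leaves the span unchanged, so the auxiliary vector $\mathbf{P}_k$ still spans $\mathcal{S}_{k,p_k,\boldsymbol{\xi}_k}$; and since $\mathbf{G}_k$ is positive definite, the Cholesky factor $\mathbf{Q}_k$ is invertible, so the linear map \eqref{4.7} carries one spanning set to another. Both steps are clean and require nothing beyond what the paper already records.

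One minor remark: you do not actually need the cited linear independence of the components of $\mathbf{P}_k$ for the span equality \eqref{4.9} itself---the two inclusions you establish via $\mathbf{Q}_k$ and $\mathbf{Q}_k^{-1}$ suffice. Linear independence is only needed if you additionally want to conclude that $\{\psi_{i_k,p_k,\boldsymbol{\xi}_k}^k\}_{i_k=1}^{n_k}$ is a \emph{basis}, which the proposition as stated does not assert (though it is of course true).
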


Figure \ref{fig1} presents a set of six second-order ($p=2$) B-spline functions on $[-1,1]$ with the uniformly spaced knot sequence $\boldsymbol{\xi}=\{-1,-1,-1,-0.5,0,0.5,1,1,1\}$ before and after orthonormalization.  The standard B-splines in Figure \ref{fig1}(a) are derived from the Cox-de Boor formula. They are non-negative and locally supported but not orthonormal with respect to the probability measure of the random variable $X$ defined on $[-1,1]$. Consider three cases of $X$ following uniform, truncated Gaussian, and Beta measures with their probability densities $f_X:[-1,1] \to \mathbb{R}\setminus(-\infty,0]$ defined by
\[
f_X(x) =
\begin{cases}
\displaystyle
\frac{1}{2},                                           & \text{(uniform)},  \\[12pt]
\displaystyle
\frac{2\phi(2x+1)}{\Phi(3)-\Phi(-1)},                  & \text{(truncated Gaussian)}, \\[12pt]
\displaystyle
\frac{\Gamma(5)(x+1)^2(1-x)}{16\Gamma(3)\Gamma(2)},    & \text{(Beta)}.
\end{cases}
\]
Here, $\Phi(\cdot)$ and $\phi(\cdot)$ are the probability distribution and density functions, respectively, of a standard Gaussian random variable. Given these probability densities and the Cholesky factorization of respective spline moment matrices, Figures \ref{fig1}(b) through \ref{fig1}(d) describe the associated orthonormalized B-splines. They depend not only on the spacing of knots but also on the probability measure of $X$.  Note that after orthonormalization, the non-constant B-splines are neither non-negative nor locally supported. Having orthonormal basis functions, however, is essential before proceeding with SDD.
\begin{figure}[htbp]
\begin{centering}
\includegraphics[width=0.73\textwidth]{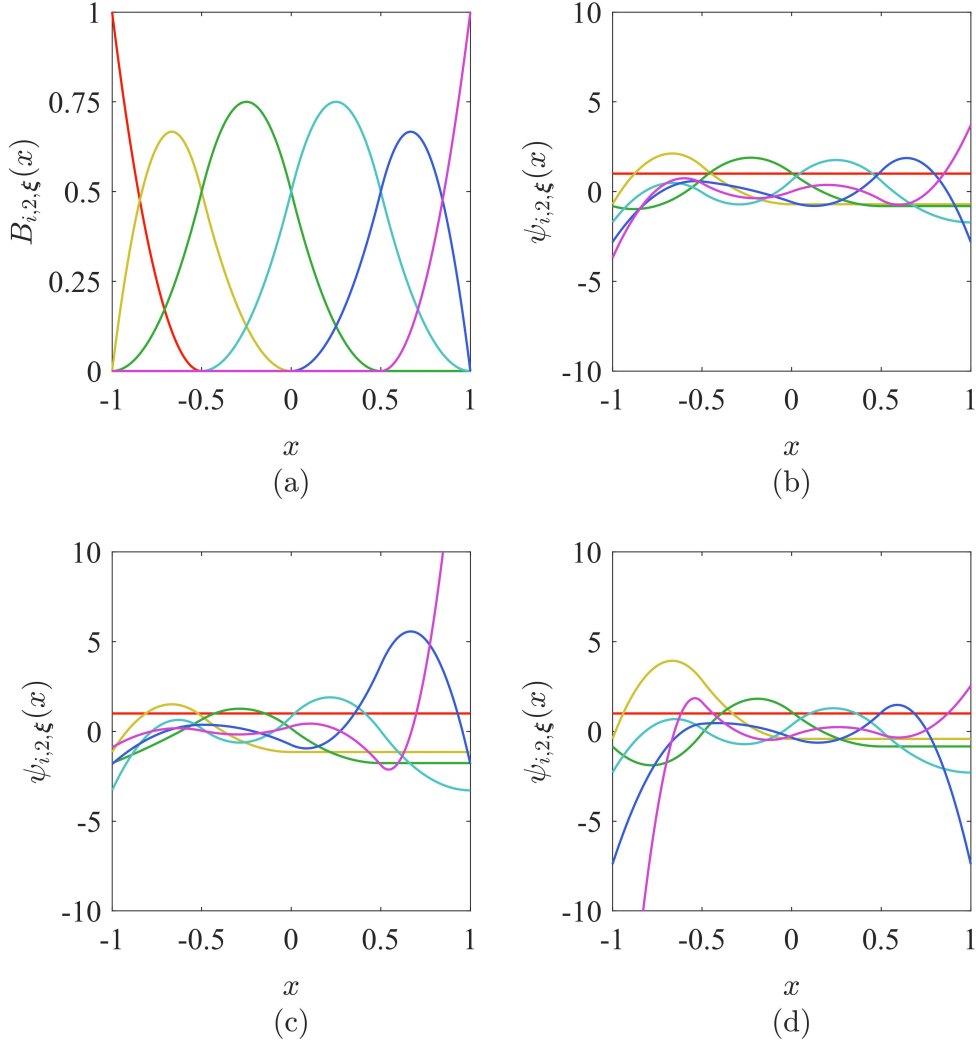}
\par\end{centering}
\caption{A set of B-splines associated with the knot sequence $\boldsymbol{\xi}=\{-1,-1,-1,-0.5,0,0.5,1,1,1\}$ and order $p=2$;
(a) non-orthonormal basis;
(b) orthonormal basis for uniform measure;
(c) orthonormal basis for truncated Gaussian measure;
(d) orthonormal basis for Beta measure.
}
\label{fig1}
\end{figure}

\subsection{Statistical properties}
Similar to $\mathbf{P}_k(X_k)$, $\boldsymbol{\psi}_k(X_k)$ is also a function of random input variable $X_k$. The first- and second-moment properties of $\boldsymbol{\psi}_k(X_k)$ are \cite{rahman20}

\begin{equation}
\mathbb{E} \left[ \boldsymbol{\psi}_k(X_k) \right] = (1, 0, \ldots, 0)^\intercal
\label{4.10}
\end{equation}
and
\begin{equation}
\mathbb{E} \left[ \boldsymbol{\psi}_k(X_k) \boldsymbol{\psi}_k^\intercal(X_k) \right] = \mathbf{I}_{n_k},
\label{4.11}
\end{equation}
respectively, where $\mathbf{I}_{n_k}$ is the $n_k \times n_k$ identity matrix.

The orthonormalized B-splines engender Fourier-like series expansion in a Hilbert space, resulting in succinct forms of the expansion and second-moment properties of an output random variable of interest.

\section{Multivariate orthonormal splines}
From Assumption \ref{a1}, the joint probability density function of the input vector $\mathbf{X}=(X_{1},\ldots,X_{N})^\intercal$ is the product of marginal density functions.  Therefore, measure-consistent multivariate orthonormal B-splines in $\mathbf{x}$ are easily constructed \cite{rahman20}, but their number will skyrocket if $N$ is too large. To circumvent the escalation, appropriate dimensionwise tensor products of measure-consistent univariate B-splines are proposed instead.

For $N\in\mathbb{N}$, denote by $\{1,\ldots,N\}$ an index set, so that $u = \{k_1,\ldots,k_{|u|}\} \subseteq \{1,\ldots,N\}$ is a subset, including the empty set $\emptyset$, with cardinality $0 \le |u| \le N$.  Let $\mathbf{X}_u:=(X_{k_1},\ldots,X_{k_{|u|}})^\intercal$, a subvector of $\mathbf{X}$, be defined on the abstract probability space $(\Omega^u,\mathcal{F}^u,\mathbb{P}^u)$, where $\Omega^u$ is the sample space of $\mathbf{X}_u$, $\mathcal{F}^u$ is a $\sigma$-algebra on $\Omega^u$, and $\mathbb{P}^u$ is a probability measure.  The corresponding image probability space is $(\mathbb{A}^u,\mathcal{B}^{u},f_{\mathbf{X}_u}d\mathbf{x}_u)$, where $\mathbb{A}^u:=\times_{k \in u} \mathbb{A}^{\{k\}} \subset \mathbb{R}^{|u|}$ is the image sample space of $\mathbf{X}_u$, $\mathcal{B}^{u}$ is the Borel $\sigma$-algebra on $\mathbb{A}^u$, and $f_{\mathbf{X}_u}(\mathbf{x}_u)$ is the marginal probability density function of $\mathbf{X}_u$ supported on $\mathbb{A}^u$.  Under Assumption \ref{a1},
\[
f_{\mathbf{X}_u}(\mathbf{x}_u)=
\displaystyle
\prod_{k \in u} f_{X_k}(x_k) =
\prod_{l=1}^{|u|} f_{X_{k_l}}(x_{k_l}),~
\mathbf{x}_u:=(x_{k_1},\ldots,x_{k_{|u|}})^\intercal.
\]

\subsection{Dimensionwise orthonormalized B-splines}
For each $k \in u \ne \emptyset$, suppose the knot sequence $\boldsymbol{\xi}_k$ on the interval $\mathbb{A}^{\{k\}}=[a_k,b_k]$, number of basis functions $n_k$, and degree $p_k$ have been specified.  The associated vector of measure-consistent univariate orthonormal splines in $x_k$ is
\[
\boldsymbol{\psi}_k(x_k):=(\psi_{1,p_k,\boldsymbol{\xi}_k}^k(x_k),\ldots,
\psi_{n_k,p_k,\boldsymbol{\xi}_k}^k(x_k))^\intercal,
~~k \in u.
\]
Correspondingly, the spline space is $\mathcal{S}_{k,p_k,\boldsymbol{\xi}_k}$, as expressed by \eqref{3.7}.  To define tensor-product B-splines in $\mathbf{x}_u$
and the associated spline space, define a multi-index $\mathbf{p}_u:=(p_{k_1},\ldots,p_{k_{|u|}}) \in \mathbb{N}_0^{|u|}$, representing the degrees of splines in all $|u|$ coordinate directions.  Denote by
$\boldsymbol{\Xi}_u:=\{ \boldsymbol{\xi}_{k_1},\ldots,\boldsymbol{\xi}_{k_{|u|}} \}$ a family of all $|u|$ knot sequences.  Because of the tensor nature of the resulting space, many properties of univariate splines carry over, described as follows.

\begin{definition}
Given $\mathbf{p}_u:=(p_{k_1},\ldots,p_{k_{|u|}}) \in \mathbb{N}_0^{|u|}$ and $\boldsymbol{\Xi}_u:=\{ \boldsymbol{\xi}_{k_1},\ldots,\boldsymbol{\xi}_{k_{|u|}} \}$, the tensor-product spline space, denoted by $\mathcal{S}_{\mathbf{p}_u,\boldsymbol{\Xi}_u}^u$, is defined by
\begin{equation}
\mathcal{S}_{\mathbf{p}_u,\boldsymbol{\Xi}_u}^u :=
\bigotimes_{k \in u} \mathcal{S}_{k,p_k,\boldsymbol{\xi}_k}=
\bigotimes_{l=1}^{|u|} \mathcal{S}_{k_l,p_{k_l},\boldsymbol{\xi}_{k_l}},
\label{5.1}
\end{equation}
\label{d8}
where the symbol $\bigotimes$ implies tensor product.
\end{definition}

It is clear from Definition \ref{d8} that $\mathcal{S}_{\mathbf{p}_u,\boldsymbol{\Xi}_u}^u$ is a linear space of dimension
\[
\displaystyle
\prod_{k\in u} n_k,
\]
where $n_k$, the dimension of the spline space $\mathcal{S}_{k,p_k,\boldsymbol{\xi}_k}$, is obtained from \eqref{3.8} when each knot sequence is chosen according to \eqref{3.2}.  Each spline function $g_u \in \mathcal{S}_{\mathbf{p}_u,\boldsymbol{\Xi}_u}^u$ is defined on a $|u|$-dimensional rectangular domain
\[
\mathbb{A}^{u}:=\displaystyle \bigtimes_{k \in u} \mathbb{A}^{k} =
\displaystyle \bigtimes_{k \in u} [a_k,b_k].
\]
Define another multi-index $\mathbf{i}_u:=(i_{k_1},\ldots,i_{k_{|u|}})$ and denote by
\[
\mathcal{I}_{u,\mathbf{n}_u} :=
\left\{ \mathbf{i}_u=(i_{k_1},\ldots,i_{k_{|u|}}): 1 \le i_{k_l} \le n_{k_l}, l=1,\ldots,|u| \right\}
\subset \mathbb{N}^{|u|}
\]
the associated multi-index set.  The index set has cardinality
\[
|\mathcal{I}_{u,\mathbf{n}_u}|=\displaystyle \prod_{k \in u} n_k,
\]
which tallies with the dimension of $\mathcal{S}_{\mathbf{p}_u,\boldsymbol{\Xi}_u}^u$.  For the coordinate direction $k_l$, define by
\begin{equation}
I_{k_l}=r_{k_l}-1,
\end{equation}
the number of subintervals corresponding to the knot sequence $\boldsymbol{\xi}_{k_l}$ with $r_{k_l}$ distinct knots. Then the partition, defined by the knot sequences $\boldsymbol{\xi}_{k_1},\ldots,\boldsymbol{\xi}_{k_{|u|}}$, splits the $|u|$-dimensional rectangle $\mathbb{A}^{u}:=\times_{k \in u}[a_k,b_k]$ into smaller rectangles
\begin{equation}
\left\{
\mathbf{x}_u=(x_{k_1},\ldots,x_{k_{|u|}})
: \zeta_{k_l,j_{k_l}} \le x_{k_l} < \zeta_{k_l,j_{k_l+1}}, ~l=1,\ldots,|u|
\right\},~j_{k_l} = 1,\ldots,I_{k_l},
\end{equation}
where $\zeta_{k_l,j_{k_l}}$ is the $j_{k_l}$th distinct knot in the coordinate direction $k_l$. A mesh is defined by the partition of $\mathbb{A}^u$ into such rectangular elements. Define the largest element size in each coordinate direction $k_l$ by
\[
h_{u,{k_l}} := \max_{j_{k_l} = 1, \ldots, I_{k_l}} \left( \zeta_{k_l,j_{k_l}+1}-\zeta_{k_l,j_{k_l}} \right),~l=1,\ldots,|u|.
\]
Then, given the knot sequences $\boldsymbol{\Xi}_u =\{ \boldsymbol{\xi}_{k_1},\ldots,\boldsymbol{\xi}_{k_{|u|}} \}$,
\[
\mathbf{h}_u:=(h_{u,{k_1}},\ldots,h_{u,{k_{|u|}}})~~\text{and}~~
h_u := \max_{l=1,\ldots,|u|} h_{u,{k_l}}
\]
define a vector of the largest element sizes in all $|u|$ coordinates and the global mesh size, respectively, for the domain $\mathbb{A}^u$.

Given the univariate B-splines in all $|u|$ coordinate directions, a formal definition of tensor-product multivariate B-splines in $\mathbf{x}_u$ is defined as follows.

\begin{definition}
Let $\mathbf{X}:=(X_{1},\ldots,X_{N})^\intercal:(\Omega,\mathcal{F})\to(\mathbb{A}^{N},\mathcal{B}^{N})$ be a vector of $N \in \mathbb{N}$ input random variables fulfilling Assumption \ref{a1}. Given a specified degree $\mathbf{p}=(p_{1},\ldots,p_{N}) \in \mathbb{N}_0^{|u|}$ and family of knot sequences $\boldsymbol{\Xi}=\{ \boldsymbol{\xi}_{1},\ldots,\boldsymbol{\xi}_{N} \}$, suppose the univariate orthonormal B-splines consistent with the marginal probability measures in all coordinate directions have been obtained as the sets
$\{\psi_{1,p_k,\boldsymbol{\xi}_k}^k(x_k),\ldots,\psi_{n_k,p_k,\boldsymbol{\xi}_k}^k(x_k)\}$,
$k=1,\ldots,N$. Then, for $\emptyset \ne u = \{k_1,\ldots,k_{|u|}\} \subseteq \{1,\ldots,N\}$, with
$\mathbf{p}_u=(p_{k_1},\ldots,p_{k_{|u|}}) \in \mathbb{N}_0^{|u|}$ and $\boldsymbol{\Xi}_u=\{ \boldsymbol{\xi}_{k_1},\ldots,\boldsymbol{\xi}_{k_{|u|}} \}$ in mind,
the multivariate orthonormal B-splines in $\mathbf{x}_u=(x_{k_1},\ldots,x_{k_{|u|}})$ consistent with the probability measure $f_{\mathbf{X}_u}(\mathbf{x}_u)d\mathbf{x}_u$ is
\begin{equation}
\Psi_{\mathbf{i}_u,\mathbf{p}_u,\boldsymbol{\Xi}_u}^u(\mathbf{x}_u) =
\displaystyle
\prod_{k \in u}
\psi_{i_k,p_k,\boldsymbol{\xi}_k}^k(x_k) =
\displaystyle
\prod_{l=1}^{|u|}
\psi_{i_{k_l},p_{k_l},\boldsymbol{\xi}_{k_l}}^k(x_{k_l}),
~~\mathbf{i}_u=(i_{k_1},\ldots,i_{k_{|u|}}) \in \mathcal{I}_{u,\mathbf{n}_u}.
\label{5.2}
\end{equation}
\label{d9}
\end{definition}

\subsection{Dimensionwise orthogonal decomposition of spline spaces}
An orthogonal decomposition of spline spaces entailing dimensionwise splitting leads to SDD.  Here, to facilitate such splitting of the spline space $\mathcal{S}_{\mathbf{p}_u,\boldsymbol{\Xi}_u}^u$ for any $\emptyset \ne u = \{k_1,\ldots,k_{|u|}\} \subseteq \{1,\ldots,N\}$, limit the index $i_{k_l}$, $l=1,\ldots,|u|$, associated with the $k_l$-th variable $x_{k_l}$, to run from 2 to $n_{k_l}$.  That is, remove the first constant element of $\boldsymbol{\psi}_k(x_k)$ to form a reduced basis
\[
\left\{\psi_{2,p_k,\boldsymbol{\xi}_k}^k(x_k),\ldots,\psi_{n_k,p_k,\boldsymbol{\xi}_k}^k(x_k)\right \},
~~k \in u,
\]
spanning the corresponding spline space
\[
\bar{\mathcal{S}}_{k,p_k,\boldsymbol{\xi}_k} :=
\operatorname{span} \left\{ \psi_{i_k,p_k,\boldsymbol{\xi}_k}^k(x_k) \right\}_{i_k=2,\ldots,n_k}
\subset \mathcal{S}_{k,p_k,\boldsymbol{\xi}_k}.
\]
Here, $\bar{\mathcal{S}}_{k,p_k,\boldsymbol{\xi}_k}$ is a subspace of $\mathcal{S}_{k,p_k,\boldsymbol{\xi}_k}$ of functions, which have \emph{zero} means because each element of the reduced basis has a \emph{zero} mean, as per \eqref{4.10}.  It is orthogonal to the subspace $\boldsymbol{1}:=\operatorname{span}\{1\}$ of constant functions.  This induces an orthogonal decomposition of
\begin{equation}
\mathcal{S}_{k,p_k,\boldsymbol{\xi}_k} = \boldsymbol{1} \oplus \bar{\mathcal{S}}_{k,p_k,\boldsymbol{\xi}_k},
\label{5.3}
\end{equation}
good for any $k \in u$.  From \eqref{4.10} and \eqref{4.11}, any two distinct subspaces $\bar{\mathcal{S}}_{k,p_k,\boldsymbol{\xi}_k}$ and $\bar{\mathcal{S}}_{l,p_l,\boldsymbol{\xi}_l}$ are orthogonal to each other whenever $k \ne l$ or $i_k \ne i_l$.  Therefore, combining \eqref{5.1} and \eqref{5.3}, the spline space $\mathcal{S}_{\mathbf{p}_u,\boldsymbol{\Xi}_u}^u$ becomes
\begin{equation}
\mathcal{S}_{\mathbf{p}_u,\boldsymbol{\Xi}_u}^u =
\displaystyle
\bigotimes_{k \in u} \left( \boldsymbol{1} \oplus \bar{\mathcal{S}}_{k,p_k,\boldsymbol{\xi}_k} \right) =
\boldsymbol{1} \oplus
\displaystyle
\bigoplus_{\emptyset \ne v \subseteq u}~
\bigotimes_{k \in v} \bar{\mathcal{S}}_{k,p_k,\boldsymbol{\xi}_k} =
\boldsymbol{1} \oplus
\displaystyle
\bigoplus_{\emptyset \ne v \subseteq u}
\bar{\mathcal{S}}_{\mathbf{p}_v,\boldsymbol{\Xi}_v}^v,
\label{5.4}
\end{equation}
with the symbol $\oplus$ representing an orthogonal sum and
\begin{equation}
\bar{\mathcal{S}}_{\mathbf{p}_v,\boldsymbol{\Xi}_v}^v :=
\bigotimes_{k \in v} \bar{\mathcal{S}}_{k,p_k,\boldsymbol{\xi}_k}
\label{5.5}
\end{equation}
defining a subspace of $\mathcal{S}_{\mathbf{p}_v,\boldsymbol{\Xi}_v}^v$.

Since the decomposition in \eqref{5.4} is valid for any $\emptyset \ne u \subseteq \{1,\ldots,N\}$, set $u = \{1,\ldots,N\}$ in \eqref{5.4} to obtain $\mathcal{S}_{\mathbf{p},\boldsymbol{\Xi}}$, which defines the space of all real-valued splines associated with $\mathbf{p}=(p_1,\ldots,p_N) \in \mathbb{N}_0^N$ and
$\boldsymbol{\Xi}=\{ \boldsymbol{\xi}_1,\ldots,\boldsymbol{\xi}_N \}$.  Then, swapping $v$ for $u$ in \eqref{5.4}
produces yet another orthogonal decomposition of
\begin{equation}
\mathcal{S}_{\mathbf{p},\boldsymbol{\Xi}}=
\displaystyle
\bigotimes_{k=1}^N \left( \boldsymbol{1} \oplus \bar{\mathcal{S}}_{k,p_k,\boldsymbol{\xi}_k} \right) =
\boldsymbol{1} \oplus
\displaystyle
\bigoplus_{\emptyset \ne u \subseteq \{1,\ldots,N\}}~
\bigotimes_{k=1}^N \bar{\mathcal{S}}_{k,p_k,\boldsymbol{\xi}_k} =
\boldsymbol{1} \oplus
\displaystyle
\bigoplus_{\emptyset \ne u \subseteq \{1,\ldots,N\}}
\bar{\mathcal{S}}_{\mathbf{p}_u,\boldsymbol{\Xi}_u}^u,
\label{5.6}
\end{equation}
comprising $2^N$ distinct subspaces.  Accordingly, a spline function $g \in \mathcal{S}_{\mathbf{p},\boldsymbol{\Xi}}$ is also decomposed as
\begin{equation}
g(\mathbf{x}) =  g_{\emptyset} +
\displaystyle \sum_{\emptyset \ne u\subseteq\{1,\ldots,N\}}g_{u}(\mathbf{x}_{u})
\label{5.7}
\end{equation}
into $2^N$ distinct terms, where $g_{\emptyset} \in \boldsymbol{1}$ and $g_{u}(\mathbf{x}_{u}) \in \bar{\mathcal{S}}_{\mathbf{p}_u,\boldsymbol{\Xi}_u}^u$ are various component functions, describing a constant or an $|u|$-variate interaction effect of $\mathbf{x}_{u}=(x_{k_{1}},\ldots,x_{k_{|u|}})$ on $g$ when $|u|=0$ or $|u|>0$.  The expansion in \eqref{5.7}, referred to as dimensional decomposition by Rahman \cite{rahman14}, is unique as long as the probability measure of $\mathbf{X}$ is fixed.  The decomposition, first presented by Hoeffding \cite{hoeffding48} in relation to his seminal work on $U$-statistics, has been studied by many researchers \cite{efron81,griebel10,kuo10,owen97,rabitz99,rahman14,sobol03}.

For a product-type probability measure $f_{\mathbf{X}}(\mathbf{x})d\mathbf{x}$, as assumed here, the component functions of $g$ can be obtained as \cite{rahman14}
\begin{subequations}
\begin{alignat}{2}
g_{\emptyset} = & \int_{\mathbb{A}^{N}}g(\mathbf{x})f_{\mathbf{X}}(\mathbf{x})d\mathbf{x},
\label{5.8a} \\
g_{u}(\mathbf{x}_{u}) = &
\displaystyle \int_{\mathbb{A}^{N-|u|}}g(\mathbf{x}_{u},\mathbf{x}_{-u})
f_{\mathbf{X}_{-u}}(\mathbf{x}_{-u})d\mathbf{x}_{-u}-
\displaystyle \sum_{v\subset u}g_{v}(\mathbf{X}_{v}),
\label{5.8b}
\end{alignat}
\end{subequations}
where $-u=\{1,\ldots,N\}\setminus u$ is a complementary set of $u\subseteq\{1,\ldots,N\}$ and $f_{\mathbf{X}_{-u}}(\mathbf{x}_{-u})$ is the marginal density function of $\mathbf{X}_{-u}$.  The decomposition in \eqref{5.7} with its component functions obtained from \eqref{5.8a} and \eqref{5.8b} is the well-known analysis-of-variance (ANOVA) decomposition \cite{efron81,sobol03}.  Readers interested in other variants of dimensional decomposition, such as those obtained for a Dirac measure and a non-product-type probability measure, are referred to the works of Griebel and Holtz \cite{griebel10} and Rahman \cite{rahman14b}, respectively.

\subsection{Statistical properties}
When the input random variables $X_1,\ldots,X_N$, instead of real variables $x_1,\ldots,x_N$, are inserted in the argument, the multivariate splines $\Psi_{\mathbf{i}_u,\mathbf{p}_u,\boldsymbol{\Xi}_u}^u(\mathbf{X}_u)$, $\emptyset \ne u\subseteq\{1,\ldots,N\}$, $\mathbf{i}_u \in \mathcal{I}_{u,\mathbf{n}_u}$, become functions of random input variables.  Therefore, it is important to establish their second-moment properties, to be exploited in Section 6.  However, as the dimensionwise decomposition, described in the previous subsection, excludes the first constant element of $\boldsymbol{\psi}_{k_l}(x_{k_l})$, the definition of a reduced index set, expressed by
\[
\bar{\mathcal{I}}_{u,\mathbf{n}_u} := \left\{ \mathbf{i}_u=(i_{k_1},\ldots,i_{k_{|u|}}): 2 \le i_{k_l} \le n_{k_l}, 1 \le l \le |u| \right\}
\subset (\mathbb{N}\setminus \{1\})^{|u|},
\]
is required. The principal difference between $\bar{\mathcal{I}}_{u,\mathbf{n}_u}$ and $\mathcal{I}_{u,\mathbf{n}_u}$ stems from the fact that the former index set restricts the range of index $i_{k_l}$, $l=1,\ldots,|u|$, to vary from 2 to $n_{k_l}$. As a result, any reduction of the degree of interaction of the corresponding multivariate spline basis below $|u|$ is avoided.

The reduced index set has cardinality
\[
|\bar{\mathcal{I}}_{u,\mathbf{n}_u}| := \prod_{k \in u} (n_k-1).
\]
It is elementary to verify that the sum
\[
\displaystyle
1 + \sum_{\emptyset \ne u \subseteq \{1,\ldots,N\}} |\bar{\mathcal{I}}_{u,\mathbf{n}_u}| =
1 + \sum_{\emptyset \ne u \subseteq \{1,\ldots,N\}} \prod_{k \in u} (n_k-1) =
\prod_{k=1}^N n_k,
\]
thus matching the dimension of $\mathcal{S}_{\mathbf{p},\boldsymbol{\Xi}}$.

\begin{proposition}
Let $\mathbf{X}:=(X_{1},\ldots,X_{N})^\intercal:(\Omega,\mathcal{F})\to(\mathbb{A}^{N},\mathcal{B}^{N})$ be a vector of $N \in \mathbb{N}$ input random variables fulfilling Assumption \ref{a1}.  Then, for $\emptyset \ne u,v \subseteq \{1,\ldots,N\}$, $\mathbf{i}_u \in \bar{\mathcal{I}}_{u,\mathbf{n}_u}$, and $\mathbf{j}_v \in \bar{\mathcal{I}}_{v,\mathbf{n}_v}$, the first- and second-order moments of multivariate orthonormal B-splines are
\begin{equation}
\mathbb{E} \left[ \Psi_{\mathbf{i}_u,\mathbf{p}_u,\boldsymbol{\Xi}_u}^u(\mathbf{X}_u) \right] = 0
\label{5.9}
\end{equation}
and
\begin{equation}
\mathbb{E} \left[ \Psi_{\mathbf{i}_u,\mathbf{p}_u,\boldsymbol{\Xi}_u}^u(\mathbf{X}_u)
\Psi_{\mathbf{j}_v,\mathbf{p}_v,\boldsymbol{\Xi}_v}^v(\mathbf{X}_v) \right] =
\begin{cases}
1,              & u = v~\text{and}~\mathbf{i}_u=\mathbf{j}_v, \\
0,              & \text{otherwise},
\end{cases}
\label{5.10}
\end{equation}
respectively.
\label{p7}
\end{proposition}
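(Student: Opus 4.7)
The plan is to reduce both moment computations to the univariate orthonormality relations \eqref{4.10} and \eqref{4.11} via the independence of the components of $\mathbf{X}$ granted by Assumption \ref{a1}. Because the multivariate basis function $\Psi^u_{\mathbf{i}_u,\mathbf{p}_u,\boldsymbol{\Xi}_u}(\mathbf{X}_u) = \prod_{k \in u} \psi^k_{i_k,p_k,\boldsymbol{\xi}_k}(X_k)$ is a pure product over disjoint coordinates, and the $X_k$ are mutually independent, the expectation of any such product factors across coordinates.

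For the first moment, I would write
\[
\mathbb{E}\!\left[ \Psi^u_{\mathbf{i}_u,\mathbf{p}_u,\boldsymbol{\Xi}_u}(\mathbf{X}_u) \right] = \prod_{k \in u} \mathbb{E}\!\left[ \psi^k_{i_k,p_k,\boldsymbol{\xi}_k}(X_k) \right].
\]
Since $\mathbf{i}_u \in \bar{\mathcal{I}}_{u,\mathbf{n}_u}$ restricts $i_k \geq 2$, each univariate factor vanishes by \eqref{4.10}, and $u \neq \emptyset$ guarantees that at least one such factor is present, yielding \eqref{5.9}.

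For the second moment, I would partition the coordinate set $u \cup v$ into the three disjoint pieces $u \cap v$, $u \setminus v$, and $v \setminus u$, and, using independence, express
\[
\mathbb{E}\!\left[ \Psi^u_{\mathbf{i}_u}(\mathbf{X}_u) \Psi^v_{\mathbf{j}_v}(\mathbf{X}_v) \right] = \prod_{k \in u \cap v} \mathbb{E}\!\left[ \psi^k_{i_k}(X_k)\psi^k_{j_k}(X_k) \right] \prod_{k \in u \setminus v} \mathbb{E}\!\left[ \psi^k_{i_k}(X_k) \right] \prod_{k \in v \setminus u} \mathbb{E}\!\left[ \psi^k_{j_k}(X_k) \right],
\]
with degree/knot subscripts suppressed for brevity. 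Now I would case-split: (i) if $u \neq v$, then either $u \setminus v$ or $v \setminus u$ is nonempty, and because indices in $\bar{\mathcal{I}}$ are all $\geq 2$, \eqref{4.10} forces at least one factor to vanish; (ii) if $u = v$ but $\mathbf{i}_u \neq \mathbf{j}_v$, then some $i_{k_0} \neq j_{k_0}$ in $u \cap v$, and the corresponding factor vanishes by the off-diagonal entries of \eqref{4.11}; (iii) if $u = v$ and $\mathbf{i}_u = \mathbf{j}_v$, every factor in $u \cap v = u$ equals $1$ by the diagonal entries of \eqref{4.11}, and the other two products are empty. This gives \eqref{5.10}.

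There is no real obstacle here beyond bookkeeping; the structural ingredients (product density, product basis, univariate orthonormality) all line up exactly. The only point that warrants care is to verify that the hypothesis $\mathbf{i}_u \in \bar{\mathcal{I}}_{u,\mathbf{n}_u}$ (and similarly for $\mathbf{j}_v$) is actually used, because it is precisely this restriction $i_k \geq 2$ that allows the coordinates in $u \setminus v$ and $v \setminus u$ to force a zero in case (i); with the full index set $\mathcal{I}_{u,\mathbf{n}_u}$, the constant mode $i_k = 1$ would survive and cross-terms between basis functions on different subsets $u \neq v$ would not generally vanish.
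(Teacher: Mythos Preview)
Your proof is correct and follows exactly the approach indicated in the paper, which simply states that the univariate relations \eqref{4.10} and \eqref{4.11} lead to the desired result. You have supplied the explicit factorization and case analysis that the paper omits, and your closing remark about why the restriction $\mathbf{i}_u \in \bar{\mathcal{I}}_{u,\mathbf{n}_u}$ is essential is a useful observation that the paper does not make explicit.
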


The statistical properties of univariate orthonormal B-splines in \eqref{4.10} and \eqref{4.11} lead to the desired result of Proposition \ref{p7}.

\subsection{Orthonormal basis}
The following proposition shows that a reduced set of multivariate orthonormal splines from Definition \ref{d9} span the spline space of interest.

\begin{proposition}
Let $\mathbf{X}:=(X_{1},\ldots,X_{N})^\intercal:(\Omega,\mathcal{F})\to(\mathbb{A}^{N},\mathcal{B}^{N})$ be a vector of $N \in \mathbb{N}$ input random variables fulfilling Assumption \ref{a1} and $\mathbf{X}_u:=(X_{k_1},\ldots,X_{k_{|u|}})^\intercal:(\Omega^u,\mathcal{F}^u)\to(\mathbb{A}^u,\mathcal{B}^u)$, $\emptyset \ne u \subseteq \{1,\ldots,N\}$, be a subvector of $\mathbf{X}$.  Then $\{\Psi_{\mathbf{i}_u,\mathbf{p}_u,\boldsymbol{\Xi}_u}(\mathbf{x}_u): \mathbf{i}_u \in \bar{\mathcal{I}}_{u,\mathbf{n}_u}\}$, the reduced set of multivariate orthonormal B-splines for a chosen degree $\mathbf{p}_u$ and family of knot sequences $\boldsymbol{\Xi}_u$, consistent with the probability measure $f_{\mathbf{X}_u}(\mathbf{x}_u)d\mathbf{x}_u$, is a basis of $\bar{\mathcal{S}}_{\mathbf{p}_u,\boldsymbol{\Xi}_u}^u$, that is,
\begin{equation}
\bar{\mathcal{S}}_{\mathbf{p}_u,\boldsymbol{\Xi}_u}^u =
\operatorname{span} \left\{ \Psi_{\mathbf{i}_u,\mathbf{p}_u,\boldsymbol{\Xi}_u}^u(\mathbf{x}_u) \right\}_{\mathbf{i}_u \in \bar{\mathcal{I}}_{u,\mathbf{n}_u}} =
\operatorname{span} \left\{ \psi_{i_k,p_k,\boldsymbol{\xi}_k}^k(x_k) \right\}_{i_k=2,\ldots,n_k}.
\label{5.10b}
\end{equation}
Moreover,
\begin{equation}
\mathcal{S}_{\mathbf{p},\boldsymbol{\Xi}}=
\boldsymbol{1} \oplus
\displaystyle
\bigoplus_{\emptyset \ne u \subseteq \{1,\ldots,N\}}
\operatorname{span} \left\{ \Psi_{\mathbf{i}_u,\mathbf{p}_u,\boldsymbol{\Xi}_u}^u(\mathbf{x}_u) \right\}_{\mathbf{i}_u \in \bar{\mathcal{I}}_{u,\mathbf{n}_u}} =
\boldsymbol{1} \oplus
\displaystyle
\bigoplus_{\emptyset \ne u \subseteq \{1,\ldots,N\}}
\bigotimes_{k \in u}
\operatorname{span} \left\{ \psi_{i_k,p_k,\boldsymbol{\xi}}^k(x_k) \right\}_{i_k=2,\ldots,n_k}.
\label{5.11}
\end{equation}
\label{p8}
\end{proposition}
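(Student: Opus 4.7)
The plan is to first establish the chain of equalities in \eqref{5.10b} for a fixed non-empty $u$, and then to derive \eqref{5.11} by a direct substitution of this result into the orthogonal decomposition \eqref{5.6} already in hand. I would begin from the definition \eqref{5.5}, which reads $\bar{\mathcal{S}}_{\mathbf{p}_u,\boldsymbol{\Xi}_u}^u = \bigotimes_{k \in u} \bar{\mathcal{S}}_{k,p_k,\boldsymbol{\xi}_k}$, where each univariate factor is, by its very construction in the preamble to \eqref{5.3}, $\operatorname{span}\{\psi_{i_k,p_k,\boldsymbol{\xi}_k}^k(x_k)\}_{i_k=2,\ldots,n_k}$. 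This immediately yields the right-most expression of \eqref{5.10b}. Realizing each tensor factor as a linear space of real-valued functions on $[a_k,b_k]$, the tensor product $\bar{\mathcal{S}}_{\mathbf{p}_u,\boldsymbol{\Xi}_u}^u$ is represented concretely as the space of finite sums of pointwise products $\prod_{k \in u} f_k(x_k)$ with $f_k \in \bar{\mathcal{S}}_{k,p_k,\boldsymbol{\xi}_k}$. Applying the standard multilinear-algebra fact that tensoring spanning sets delivers a spanning set of the tensor product, this space coincides with the linear span of all products $\prod_{k\in u} \psi_{i_k,p_k,\boldsymbol{\xi}_k}^k(x_k)$ with $i_k \in \{2,\ldots,n_k\}$. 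By Definition \ref{d9}, every such product is precisely a multivariate B-spline $\Psi_{\mathbf{i}_u,\mathbf{p}_u,\boldsymbol{\Xi}_u}^u(\mathbf{x}_u)$ with $\mathbf{i}_u \in \bar{\mathcal{I}}_{u,\mathbf{n}_u}$, delivering the middle expression of \eqref{5.10b}.

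To upgrade this spanning statement to a genuine basis claim---and so rule out any hidden redundancy---I would verify linear independence directly from Proposition \ref{p7}: any vanishing combination $\sum_{\mathbf{i}_u \in \bar{\mathcal{I}}_{u,\mathbf{n}_u}} c_{\mathbf{i}_u} \Psi_{\mathbf{i}_u,\mathbf{p}_u,\boldsymbol{\Xi}_u}^u(\mathbf{X}_u) = 0$ in $L^2(\mathbb{A}^u, f_{\mathbf{X}_u}d\mathbf{x}_u)$ forces $c_{\mathbf{i}_u}=0$ upon taking the inner product against each basis element in turn. As a consistency check, the cardinality $|\bar{\mathcal{I}}_{u,\mathbf{n}_u}| = \prod_{k\in u}(n_k-1)$ matches $\prod_{k\in u}\dim \bar{\mathcal{S}}_{k,p_k,\boldsymbol{\xi}_k}$ via \eqref{3.8}, which in turn equals $\dim \bar{\mathcal{S}}_{\mathbf{p}_u,\boldsymbol{\Xi}_u}^u$.

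For \eqref{5.11}, I would simply substitute \eqref{5.10b} (with $v$ replaced by $u$) into the chain \eqref{5.6}; the mutual orthogonality of the summands is preserved because it is inherited from the orthogonality of $\bar{\mathcal{S}}_{k,p_k,\boldsymbol{\xi}_k}$ to the constant subspace $\boldsymbol{1}$ in each coordinate, already the ingredient used to assemble \eqref{5.4}. The only subtlety worth flagging---and the main place where one must tread carefully---is the identification of the abstract tensor product $\bigotimes_{k\in u} \bar{\mathcal{S}}_{k,p_k,\boldsymbol{\xi}_k}$ with the concrete function space of finite sums of pointwise products on $\mathbb{A}^u$; this is the convention implicit in Definition \ref{d8} and equation \eqref{5.1}, and once accepted, the remainder of the argument reduces to the universal property of tensor products coupled with the orthonormality supplied by Proposition \ref{p7}.
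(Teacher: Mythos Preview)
Your proposal is correct and follows essentially the same approach as the paper, which merely remarks that the orthonormality from Proposition~\ref{p7} yields linear independence of $\{\Psi_{\mathbf{i}_u,\mathbf{p}_u,\boldsymbol{\Xi}_u}^u(\mathbf{x}_u)\}_{\mathbf{i}_u \in \bar{\mathcal{I}}_{u,\mathbf{n}_u}}$ and that the remaining claims then follow readily. Your write-up simply fills in the details that the paper leaves implicit---the spanning statement via tensoring the univariate bases from \eqref{5.5} and Definition~\ref{d9}, and the substitution into \eqref{5.6} to obtain \eqref{5.11}.
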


The statistical properties in Proposition \ref{p7} result in linear independence of the elements of $\{ \Psi_{\mathbf{i}_u,\mathbf{p}_u,\boldsymbol{\Xi}_u}^u(\mathbf{x}_u)\}_{\mathbf{i}_u \in \bar{\mathcal{I}}_{u,\mathbf{n}_u}}$.  The desired results can be obtained readily.

The multivariate and univariate orthonormalized B-splines described in this paper are both consistent with an arbitrary probability measure of input random variables with bounded domains. For unbounded domains, such as those associated with Gaussian random variables, appropriate probability preserving transformations are required. In this case, a random variable with an unbounded domain needs to be mapped to another random variable with a bounded domain. Readers interested in additional details and examples are referred to the authors' other works \cite{dixler21,jahanbin20}.

\section{Spline dimensional decomposition}
Given an input random vector $\mathbf{X}:=(X_{1},\ldots,X_{N})^\intercal:(\Omega,\mathcal{F})\to(\mathbb{A}^{N},\mathcal{B}^{N})$
with known probability density function $f_{\mathbf{X}}({\mathbf{x}})$ on $\mathbb{A}^N \subset \mathbb{R}^N$, designate by $y(\mathbf{X}):=y(X_{1},\ldots,X_{N})$ a real-valued, square-integrable, measurable transformation on $(\Omega, \mathcal{F})$.  Here, $y: \mathbb{A}^N \to \mathbb{R}$ represents an output function from a mathematical model, describing relevant stochastic performance of a complex system.  A principal objective of UQ is to estimate accurately, if it is not possible to determine exactly, the probabilistic characteristics of $y(\mathbf{X})$.  More often than not, $y$ is assumed to belong to a reasonably large function class, such as the well-known $L^2$ space.

Corresponding to the image probability space $(\mathbb{A}^N,\mathcal{B}^{N},f_{\mathbf{X}}d\mathbf{x})$,
let
\[
L^2(\mathbb{A}^N,\mathcal{B}^{N},f_{\mathbf{X}}d\mathbf{x}):=
\left\{
y: \mathbb{A}^N \to \mathbb{R}:
~\int_{\mathbb{A}^N} \left| y(\mathbf{x})\right|^2  f_{\mathbf{X}}({\mathbf{x}})d\mathbf{x} < \infty
\right\}
\]
define a weighted $L^2$-space of interest.  Clearly, $L^2(\mathbb{A}^N,\mathcal{B}^{N},f_{\mathbf{X}}d\mathbf{x})$ is a Hilbert space, which is endowed with an inner product
\[
\left( y(\mathbf{x}),z(\mathbf{x}) \right)_{L^2(\mathbb{A}^N,\mathcal{B}^{N},f_{\mathbf{X}}d\mathbf{x})}:=
\int_{\mathbb{A}^N} y(\mathbf{x})z(\mathbf{x})f_{\mathbf{X}}(\mathbf{x})d\mathbf{x}
\]
and induced norm
\[
\|y(\mathbf{x})\|_{L^2(\mathbb{A}^N,\mathcal{B}^{N},f_{\mathbf{X}}d\mathbf{x})}=
\sqrt{(y(\mathbf{x}),y(\mathbf{x}))_{L^2(\mathbb{A}^N,\mathcal{B}^{N},f_{\mathbf{X}}d\mathbf{x})}}.
\]
Similarly, for the abstract probability space $(\Omega,\mathcal{F},\mathbb{P})$, there is an isomorphic Hilbert space
\[
L^2(\Omega,\mathcal{F},\mathbb{P}):=
\left\{Y:\Omega \to \mathbb{R}:
~\int_{\Omega} \left|y(\mathbf{X}(\omega))\right|^2 d\mathbb{P}(\omega) < \infty \right\}
\]
of equivalent classes of output random variables $Y=y(\mathbf{X})$ with the corresponding inner product
\[
\left( y(\mathbf{X}),z(\mathbf{X}) \right)_{L^2(\Omega, \mathcal{F}, \mathbb{P})}:=
\int_{\Omega} y(\mathbf{X}(\omega))z(\mathbf{X}(\omega))d\mathbb{P}(\omega)
\]
and norm
\[
\|y(\mathbf{X})\|_{L^2(\Omega, \mathcal{F}, \mathbb{P})}:=
\sqrt{(y(\mathbf{X}),y(\mathbf{X}))_{L^2(\Omega, \mathcal{F}, \mathbb{P})}}.
\]
It is elementary to show that $y(\mathbf{X}(\omega)) \in L^2(\Omega,\mathcal{F},\mathbb{P})$ if and only if $y(\mathbf{x}) \in L^2(\mathbb{A}^N,\mathcal{B}^{N},f_{\mathbf{X}}d\mathbf{x})$.

\subsection{SDD approximation}
An SDD approximation of a square-integrable random variable $y(\mathbf{X}) \in L^2(\Omega,\mathcal{F},\mathbb{P})$ is simply its dimensionwise orthogonal projection onto the spline space $\mathcal{S}_{\mathbf{p},\boldsymbol{\Xi}}$, formally presented as follows.

\begin{theorem}
Let $\mathbf{X}:=(X_{1},\ldots,X_{N})^\intercal:(\Omega,\mathcal{F})\to(\mathbb{A}^{N},\mathcal{B}^{N})$ be a vector of $N \in \mathbb{N}$ input random variables fulfilling Assumption \ref{a1}.  Suppose the degree and family of knot sequences in all coordinate directions have been specified as $\mathbf{p}=(p_{1},\ldots,p_{N}) \in \mathbb{N}_0^{|u|}$ and $\boldsymbol{\Xi}=\{ \boldsymbol{\xi}_{1},\ldots,\boldsymbol{\xi}_{N} \}$, respectively.
For $\emptyset \ne u \subseteq \{1,\ldots,N\}$ and $\mathbf{X}_u:=(X_{k_1},\ldots,X_{k_{|u|}})^\intercal: (\Omega^u,\mathcal{F}^u)\to(\mathbb{A}^u,\mathcal{B}^u)$, with $\mathbf{p}_u=(p_{k_1},\ldots,p_{k_{|u|}}) \in \mathbb{N}_0^{|u|}$ and $\boldsymbol{\Xi}_u=\{ \boldsymbol{\xi}_{k_1},\ldots,\boldsymbol{\xi}_{k_{|u|}} \}$ in mind, denote by $\{\Psi_{\mathbf{i}_u,\mathbf{p}_u,\boldsymbol{\Xi}_u}^u(\mathbf{X}_u): \mathbf{i}_u \in \bar{\mathcal{I}}_{u,\mathbf{n}_u}\}$ a reduced set comprising multivariate orthonormal B-splines that is consistent with the probability measure $f_{\mathbf{X}_u}(\mathbf{x}_u)d\mathbf{x}_u$.  Then, for any random variable $y(\mathbf{X}) \in L^2(\Omega, \mathcal{F}, \mathbb{P})$, a hierarchically expanded Fourier-like series in multivariate orthonormal splines in $\mathbf{X}_u$, referred to as the SDD approximation of $y(\mathbf{X})$, is derived as
\begin{equation}
y_{\mathbf{p},\boldsymbol{\Xi}}(\mathbf{X}) :=
y_\emptyset + \displaystyle \sum_{\emptyset \ne u\subseteq\{1,\ldots,N\}}
\sum_{\mathbf{i}_u \in  \bar{\mathcal{I}}_{u,\mathbf{n}_u}}
C_{\mathbf{i}_u,\mathbf{p}_u,\boldsymbol{\Xi}_u}^u \Psi_{\mathbf{i}_u,\mathbf{p}_u,\boldsymbol{\Xi}_u}^u(\mathbf{X}_u),
\label{6.1}
\end{equation}
where the SDD expansion coefficients $y_\emptyset \in \mathbb{R}$ and $C_{\mathbf{i}_u,\mathbf{p}_u,\boldsymbol{\Xi}_u}^u \in \mathbb{R}$, $\emptyset \ne u\subseteq\{1,\ldots,N\}$, $\mathbf{i}_u \in \bar{\mathcal{I}}_{u,\mathbf{n}_u}$, are defined as
\begin{equation}
y_\emptyset :=
\mathbb{E} \left[  y(\mathbf{X}) \right] :=
\int_{\mathbb{A}^N} y(\mathbf{x}) f_{\mathbf{X}}(\mathbf{x})d\mathbf{x},
\label{6.2}
\end{equation}
\begin{equation}
C_{\mathbf{i}_u,\mathbf{p}_u,\boldsymbol{\Xi}_u}^u :=
\mathbb{E} \left[  y(\mathbf{X}) \Psi_{\mathbf{i}_u,\mathbf{p}_u,\boldsymbol{\Xi}_u}^u(\mathbf{X}_u) \right] :=
\int_{\mathbb{A}^N} y(\mathbf{x})
\Psi_{\mathbf{i}_u,\mathbf{p}_u,\boldsymbol{\Xi}_u}^u(\mathbf{x}_u) f_{\mathbf{X}}(\mathbf{x})d\mathbf{x},~~
\mathbf{i}_u \in \bar{\mathcal{I}}_{u,\mathbf{n}_u}.
\label{6.3}
\end{equation}
\label{t1}
\end{theorem}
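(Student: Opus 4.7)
The plan is to realize the SDD approximation as the orthogonal projection of $y(\mathbf{X})$ onto the finite-dimensional spline space $\mathcal{S}_{\mathbf{p},\boldsymbol{\Xi}}$ sitting inside the Hilbert space $L^2(\mathbb{A}^N,\mathcal{B}^N,f_{\mathbf{X}}d\mathbf{x})$. Because $\mathcal{S}_{\mathbf{p},\boldsymbol{\Xi}}$ is finite-dimensional, it is automatically closed, so the projection of any $y\in L^2$ exists, is unique, and takes the explicit Fourier form once an orthonormal basis is fixed. All the substantive work needed to carry this out has already been established upstream: Proposition \ref{p8} supplies the decomposition $\mathcal{S}_{\mathbf{p},\boldsymbol{\Xi}}=\boldsymbol{1}\oplus\bigoplus_{\emptyset\ne u}\operatorname{span}\{\Psi_{\mathbf{i}_u,\mathbf{p}_u,\boldsymbol{\Xi}_u}^u\}_{\mathbf{i}_u\in\bar{\mathcal{I}}_{u,\mathbf{n}_u}}$, and Proposition \ref{p7} together with \eqref{4.10} and \eqref{4.11} shows that the constant $1$ and the splines $\Psi_{\mathbf{i}_u,\mathbf{p}_u,\boldsymbol{\Xi}_u}^u(\mathbf{X}_u)$ form an orthonormal system with respect to the inner product $(\cdot,\cdot)_{L^2(\mathbb{A}^N,\mathcal{B}^N,f_{\mathbf{X}}d\mathbf{x})}$.

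With these two ingredients in place, I would write $y_{\mathbf{p},\boldsymbol{\Xi}}$ as an arbitrary element of $\mathcal{S}_{\mathbf{p},\boldsymbol{\Xi}}$ expressed in this orthonormal basis,
\[
y_{\mathbf{p},\boldsymbol{\Xi}}(\mathbf{X})
= \alpha_{\emptyset}\cdot 1
+\sum_{\emptyset\ne u\subseteq\{1,\ldots,N\}}\ \sum_{\mathbf{i}_u\in\bar{\mathcal{I}}_{u,\mathbf{n}_u}}
\alpha_{\mathbf{i}_u}^{u}\,\Psi_{\mathbf{i}_u,\mathbf{p}_u,\boldsymbol{\Xi}_u}^u(\mathbf{X}_u),
\]
and then enforce the orthogonal projection condition $(y-y_{\mathbf{p},\boldsymbol{\Xi}},\varphi)_{L^2}=0$ for each basis element $\varphi$. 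Pairing with $1$ and with a generic $\Psi_{\mathbf{j}_v,\mathbf{p}_v,\boldsymbol{\Xi}_v}^v$ and using \eqref{5.9}--\eqref{5.10} collapses every sum to a single term, which identifies $\alpha_{\emptyset}=\mathbb{E}[y(\mathbf{X})]=y_{\emptyset}$ in \eqref{6.2} and $\alpha_{\mathbf{i}_u}^u=\mathbb{E}[y(\mathbf{X})\Psi_{\mathbf{i}_u,\mathbf{p}_u,\boldsymbol{\Xi}_u}^u(\mathbf{X}_u)]=C_{\mathbf{i}_u,\mathbf{p}_u,\boldsymbol{\Xi}_u}^u$ in \eqref{6.3}. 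Well-definedness of the integrals in \eqref{6.2}--\eqref{6.3} follows from Cauchy--Schwarz, since $y\in L^2$ and each basis element is in $L^2$ by \eqref{5.10}; the isomorphism between $L^2(\Omega,\mathcal{F},\mathbb{P})$ and $L^2(\mathbb{A}^N,\mathcal{B}^N,f_{\mathbf{X}}d\mathbf{x})$ noted in the preceding paragraphs lets me pass freely between expectations and $\mathbf{x}$-integrals.

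There is no genuine obstacle here: the whole statement is essentially an application of the best-approximation theorem in a Hilbert space to an explicit finite-dimensional subspace whose orthonormal basis has already been constructed. The only place where care is needed is bookkeeping — making sure the reduced index sets $\bar{\mathcal{I}}_{u,\mathbf{n}_u}$ are used consistently so that the basis elements in different subspaces $\bar{\mathcal{S}}_{\mathbf{p}_u,\boldsymbol{\Xi}_u}^u$ stay mutually orthogonal (no index collisions across different $u$), and that the constant term $y_{\emptyset}$ is separated out cleanly to match the $\boldsymbol{1}$ summand in \eqref{5.6}. Once the indexing is set up as in Proposition \ref{p8}, the Fourier projection argument goes through term by term without additional analysis.
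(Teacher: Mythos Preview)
Your proposal is correct and follows essentially the same approach as the paper: both identify $y_{\mathbf{p},\boldsymbol{\Xi}}$ as the orthogonal projection of $y$ onto the finite-dimensional spline space $\mathcal{S}_{\mathbf{p},\boldsymbol{\Xi}}$ using the orthonormal basis supplied by Propositions~\ref{p7} and~\ref{p8}. The only cosmetic difference is that the paper extracts the coefficients by differentiating the mean-square error $e_{\text{SDD}}=\mathbb{E}[(y-y_{\mathbf{p},\boldsymbol{\Xi}})^2]$ with respect to each coefficient and setting the derivatives to zero, whereas you invoke the equivalent orthogonality characterization $(y-y_{\mathbf{p},\boldsymbol{\Xi}},\varphi)_{L^2}=0$ directly; both routes are standard and immediately yield \eqref{6.2}--\eqref{6.3}.
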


\begin{proof}
If $y(\mathbf{x}) \in L^2(\mathbb{A}^N,\mathcal{B}^{N},f_{\mathbf{X}}d\mathbf{x})$, then an orthogonal projection operator
\[
P_{\mathcal{S}_{\mathbf{p},\boldsymbol{\Xi}}}: L^2(\mathbb{A}^N,\mathcal{B}^{N},f_{\mathbf{X}}d\mathbf{x})\to \mathcal{S}_{\mathbf{p},\boldsymbol{\Xi}},
\]
defined by
\begin{equation}
P_{\mathcal{S}_{\mathbf{p},\boldsymbol{\Xi}}} y :=
y_\emptyset + \displaystyle \sum_{\emptyset \ne u\subseteq\{1,\ldots,N\}}
\sum_{\mathbf{i}_u \in  \bar{\mathcal{I}}_{u,\mathbf{n}_u}}
C_{\mathbf{i}_u,\mathbf{p}_u,\boldsymbol{\Xi}_u}^u \Psi_{\mathbf{i}_u,\mathbf{p}_u,\boldsymbol{\Xi}_u}^u(\mathbf{x}_u),
\label{6.6}
\end{equation}
can be established. By definition of the random vector $\mathbf{X}$, the set
\[
\left\{
1,\Psi_{\mathbf{i}_u,\mathbf{p}_u,\boldsymbol{\Xi}_u}^u(\mathbf{X}_u):
\emptyset \ne u\subseteq\{1,\ldots,N\},~\mathbf{i}_u \in  \bar{\mathcal{I}}_{u,\mathbf{n}_u}
\right\}
\]
is a basis of the spline subspace $\mathcal{S}_{\mathbf{p},\boldsymbol{\Xi}}$ of $L^2(\Omega,\mathcal{F},\mathbb{P})$, inheriting the properties of the basis
\[
\left\{
1,\Psi_{\mathbf{i}_u,\mathbf{p}_u,\boldsymbol{\Xi}_u}^u(\mathbf{x}_u):
\emptyset \ne u\subseteq\{1,\ldots,N\},~\mathbf{i}_u \in  \bar{\mathcal{I}}_{u,\mathbf{n}_u}
\right\}
\]
of the spline subspace $\mathcal{S}_{\mathbf{p},\boldsymbol{\Xi}}$ of $L^2(\mathbb{A}^N,\mathcal{B}^{N},f_{\mathbf{X}}d\mathbf{x})$.  Here, with a certain abuse of notation, $\mathcal{S}_{\mathbf{p},\boldsymbol{\Xi}}$ is used as a set of spline functions of both real variables ($\mathbf{x}$) and random variables ($\mathbf{X}$). Therefore, \eqref{6.6} yields the expansion in \eqref{6.1}.

To derive the expressions of the expansion coefficients, define a second moment error,
\begin{equation}
e_{\text{SDD}}:= \mathbb{E}\Biggl[ y(\mathbf{X}) -
y_\emptyset -
\displaystyle \sum_{\emptyset \ne u\subseteq\{1,\ldots,N\}} \sum_{\mathbf{i}_u \in  \bar{\mathcal{I}}_{u,\mathbf{n}_u}}
C_{\mathbf{i}_u,\mathbf{p}_u,\boldsymbol{\Xi}_u}^u \Psi_{\mathbf{i}_u,\mathbf{p}_u,\boldsymbol{\Xi}_u}^u(\mathbf{X}_u) \Biggr]^2,
\label{6.7}
\end{equation}
committed by the SDD approximation.  Differentiate both sides of \eqref{6.7} with respect to $y_\emptyset$ and $C_{\mathbf{i}_u,\mathbf{p}_u,\boldsymbol{\Xi}_u}^u$, $\emptyset \ne u\subseteq\{1,\ldots,N\}$, $\mathbf{i}_u \in \bar{\mathcal{I}}_{u,\mathbf{n}_u}$, to write
\begin{equation}
\begin{array}{rcl}
\displaystyle \frac{\partial e_{\text{SDD}}}{\partial y_\emptyset}
& = &
\displaystyle
\frac{\partial}{\partial y_\emptyset}
\mathbb{E} \Biggl[  y(\mathbf{X}) - y_\emptyset - \displaystyle \sum_{\emptyset \ne v\subseteq\{1,\ldots,N\}} \sum_{\mathbf{i}_v \in  \bar{\mathcal{I}}_{v,\mathbf{n}_v}}
C_{\mathbf{i}_v,\mathbf{p}_v,\boldsymbol{\Xi}_v}^v \Psi_{\mathbf{i}_v,\mathbf{p}_v,\boldsymbol{\Xi}_v}^v(\mathbf{X}_v) \Biggr]^2
 \\
&  = &
\displaystyle
\mathbb{E} \Biggl[
\frac{\partial}{\partial y_\emptyset} \Biggl\{
y(\mathbf{X}) -  y_\emptyset - \displaystyle \sum_{\emptyset \ne v\subseteq\{1,\ldots,N\}} \sum_{\mathbf{i}_v \in  \bar{\mathcal{I}}_{v,\mathbf{n}_v}}
C_{\mathbf{i}_v,\mathbf{p}_v,\boldsymbol{\Xi}_v}^v \Psi_{\mathbf{i}_v,\mathbf{p}_v,\boldsymbol{\Xi}_v}^v(\mathbf{X}_v)
  \Biggr\}^2
\Biggr]
 \\
&  = &
\displaystyle
 2 \mathbb{E} \Biggl[
 \Biggl\{
 y_\emptyset + \displaystyle \sum_{\emptyset \ne v\subseteq\{1,\ldots,N\}} \sum_{\mathbf{i}_v \in  \bar{\mathcal{I}}_{v,\mathbf{n}_v}}
C_{\mathbf{i}_v,\mathbf{p}_v,\boldsymbol{\Xi}_v}^v \Psi_{\mathbf{i}_v,\mathbf{p}_v,\boldsymbol{\Xi}_v}^v(\mathbf{X}_v) - y(\mathbf{X}) \Biggr\}
\times 1
 \Biggr]
 \\
&  = &
\displaystyle
2 \left\{ y_\emptyset - \mathbb{E}\left[ y(\mathbf{X})\right] \right\}
\end{array}
\label{6.8}
\end{equation}
and
\begin{equation}
\begin{array}{rcl}
\displaystyle \frac{\partial e_{\text{SDD}}}{\partial C_{\mathbf{i}_u,\mathbf{p}_u,\boldsymbol{\Xi}_u}^u}
& = &
\displaystyle
\frac{\partial}{\partial C_{\mathbf{i}_u,\mathbf{p}_u,\boldsymbol{\Xi}_u}^u}
\mathbb{E} \Biggl[  y(\mathbf{X}) - y_\emptyset - \displaystyle \sum_{\emptyset \ne v\subseteq\{1,\ldots,N\}} \sum_{\mathbf{i}_v \in  \bar{\mathcal{I}}_{v,\mathbf{n}_v}}
C_{\mathbf{i}_v,\mathbf{p}_v,\boldsymbol{\Xi}_v}^v \Psi_{\mathbf{i}_v,\mathbf{p}_v,\boldsymbol{\Xi}_v}^v(\mathbf{X}_v) \Biggl]^2
 \\
&  = &
\displaystyle
\mathbb{E} \Biggl[
\frac{\partial}{\partial C_{\mathbf{i}_u,\mathbf{p}_u,\boldsymbol{\Xi}_u}^u} \Biggl\{
y(\mathbf{X}) -  y_\emptyset - \displaystyle \sum_{\emptyset \ne v\subseteq\{1,\ldots,N\}} \sum_{\mathbf{i}_v \in  \bar{\mathcal{I}}_{v,\mathbf{n}_v}}
C_{\mathbf{i}_v,\mathbf{p}_v,\boldsymbol{\Xi}_v}^v \Psi_{\mathbf{i}_v,\mathbf{p}_v,\boldsymbol{\Xi}_v}^v(\mathbf{X}_v)
  \Biggr\}^2
\Biggr]
 \\
&  = &
\displaystyle
 2 \mathbb{E} \Biggl[
 \Biggl\{
 y_\emptyset + \displaystyle \sum_{\emptyset \ne v\subseteq\{1,\ldots,N\}} \sum_{\mathbf{i}_v \in  \bar{\mathcal{I}}_{v,\mathbf{n}_v}}
C_{\mathbf{i}_v,\mathbf{p}_v,\boldsymbol{\Xi}_v}^v \Psi_{\mathbf{i}_v,\mathbf{p}_v,\boldsymbol{\Xi}_v}^v(\mathbf{X}_v) - y(\mathbf{X}) \Biggr\}
\Psi_{\mathbf{i}_u,\mathbf{p}_u,\boldsymbol{\Xi}_u}^u(\mathbf{X}_u)
 \Biggr]
 \\
&  = &
\displaystyle
2\left\{ C_{\mathbf{i}_u,\mathbf{p}_u,\boldsymbol{\Xi}_u}^u - \mathbb{E}\left[ y(\mathbf{X})
\Psi_{\mathbf{i}_u,\mathbf{p}_u,\boldsymbol{\Xi}_u}^u(\mathbf{X}_u) \right] \right\}.
\end{array}
\label{6.9}
\end{equation}
Here, the second, third, and last lines of both \eqref{6.8} and \eqref{6.9} are obtained by interchanging the differential and expectation operators, performing the differentiation, and swapping the expectation and summation operators and applying \eqref{4.10} and \eqref{4.11}, respectively.   Setting ${\partial e_{\text{SDD}}}/{\partial y_\emptyset}=0$ in \eqref{6.8} and ${\partial e_{\text{SDD}}}/{\partial C_{\mathbf{i}_u,\mathbf{p}_u,\boldsymbol{\Xi}_u}^u}=0$ in \eqref{6.9} yields \eqref{6.2} and \eqref{6.3}, respectively.
\end{proof}

\begin{proposition}
The SDD approximation $y_{\mathbf{p},\boldsymbol{\Xi}}(\mathbf{X})$ is the best approximation of $y(\mathbf{X})$ in the sense that
\begin{equation}
\displaystyle
\mathbb{E}\left[ y(\mathbf{X})-y_{\mathbf{p},\boldsymbol{\Xi}}(\mathbf{X}) \right]^2 =
\inf_{g \in \mathcal{S}_{\mathbf{p},\boldsymbol{\Xi}}}
\mathbb{E}\left[ y(\mathbf{X})-g(\mathbf{X}) \right]^2
\label{6.4}
\end{equation}
or, equivalently,
\begin{equation}
\displaystyle
{\left\| y(\mathbf{x})-y_{\mathbf{p},\boldsymbol{\Xi}}(\mathbf{x}) \right\|}_{L^2(\mathbb{A}^N,\mathcal{B}^{N},f_{\mathbf{X}}d\mathbf{x})} =
\inf_{g \in \mathcal{S}_{\mathbf{p},\boldsymbol{\Xi}}}
{\left\| y(\mathbf{x})-g(\mathbf{x}) \right\|}_{L^2(\mathbb{A}^N,\mathcal{B}^{N},f_{\mathbf{X}}d\mathbf{x})}.
\label{6.5}
\end{equation}
\label{p9}
\end{proposition}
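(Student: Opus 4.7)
The plan is to invoke the standard best-approximation property of orthogonal projections onto a closed subspace of a Hilbert space, specialized to the finite-dimensional (hence closed) subspace $\mathcal{S}_{\mathbf{p},\boldsymbol{\Xi}} \subset L^2(\mathbb{A}^N,\mathcal{B}^{N},f_{\mathbf{X}}d\mathbf{x})$. Since the two norms in \eqref{6.4} and \eqref{6.5} coincide under the isomorphism between $L^2(\Omega,\mathcal{F},\mathbb{P})$ and $L^2(\mathbb{A}^N,\mathcal{B}^{N},f_{\mathbf{X}}d\mathbf{x})$, it suffices to prove the second form, from which \eqref{6.4} then follows.

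The first step is to verify that the residual $y(\mathbf{x})-y_{\mathbf{p},\boldsymbol{\Xi}}(\mathbf{x})$ is orthogonal in $L^2(\mathbb{A}^N,\mathcal{B}^{N},f_{\mathbf{X}}d\mathbf{x})$ to every element of $\mathcal{S}_{\mathbf{p},\boldsymbol{\Xi}}$. For this I would compute the inner product of $y-y_{\mathbf{p},\boldsymbol{\Xi}}$ with the constant function $1$ and with each basis element $\Psi^u_{\mathbf{i}_u,\mathbf{p}_u,\boldsymbol{\Xi}_u}$; combining the orthonormality relations of Proposition \ref{p7} with the defining formulae \eqref{6.2} and \eqref{6.3} for the expansion coefficients, each such inner product collapses to zero exactly. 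Linearity extends the orthogonality to every $g \in \mathcal{S}_{\mathbf{p},\boldsymbol{\Xi}}$, since Proposition \ref{p8} guarantees that $\{1\}$ together with $\{\Psi^u_{\mathbf{i}_u,\mathbf{p}_u,\boldsymbol{\Xi}_u}: \emptyset \ne u \subseteq \{1,\ldots,N\},~\mathbf{i}_u \in \bar{\mathcal{I}}_{u,\mathbf{n}_u}\}$ spans $\mathcal{S}_{\mathbf{p},\boldsymbol{\Xi}}$.

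With the orthogonality in hand, the second step is Pythagoras. For any $g \in \mathcal{S}_{\mathbf{p},\boldsymbol{\Xi}}$, write $y-g = (y-y_{\mathbf{p},\boldsymbol{\Xi}}) + (y_{\mathbf{p},\boldsymbol{\Xi}}-g)$ and observe that $y_{\mathbf{p},\boldsymbol{\Xi}}-g \in \mathcal{S}_{\mathbf{p},\boldsymbol{\Xi}}$, so the two summands are $L^2$-orthogonal. The resulting identity
\[
\|y-g\|^2_{L^2(\mathbb{A}^N,\mathcal{B}^{N},f_{\mathbf{X}}d\mathbf{x})} = \|y-y_{\mathbf{p},\boldsymbol{\Xi}}\|^2_{L^2(\mathbb{A}^N,\mathcal{B}^{N},f_{\mathbf{X}}d\mathbf{x})} + \|y_{\mathbf{p},\boldsymbol{\Xi}}-g\|^2_{L^2(\mathbb{A}^N,\mathcal{B}^{N},f_{\mathbf{X}}d\mathbf{x})}
\]
shows that the infimum on the right-hand side of \eqref{6.5} is attained, uniquely, at $g = y_{\mathbf{p},\boldsymbol{\Xi}}$. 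An alternative, essentially equivalent route is already embedded in the proof of Theorem \ref{t1}: the stationarity conditions \eqref{6.8} and \eqref{6.9} identify the critical points of the strictly convex quadratic functional $e_{\text{SDD}}$, whose Hessian is the identity matrix by the orthonormality encoded in \eqref{5.9}--\eqref{5.10}, so those critical values are automatically the unique global minimizers.

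I do not foresee any substantive obstacle here, as the argument is the textbook Hilbert-space projection theorem and the only computation needed is the orthogonality check, which is immediate from \eqref{5.9}--\eqref{5.10} together with \eqref{6.2}--\eqref{6.3}. The mildly delicate point is simply the notational bookkeeping across the reduced index sets $\bar{\mathcal{I}}_{u,\mathbf{n}_u}$ and the tensor-product structure of the basis, but no new analytic content enters.
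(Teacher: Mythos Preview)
Your proposal is correct. The paper's own proof of Proposition~\ref{p9} takes precisely the ``alternative route'' you sketch at the end: it writes an arbitrary $g\in\mathcal{S}_{\mathbf{p},\boldsymbol{\Xi}}$ in the orthonormal basis with undetermined coefficients, differentiates the quadratic error $\mathbb{E}[\{y(\mathbf{X})-g(\mathbf{X})\}^2]$ with respect to each coefficient, and observes via \eqref{6.8}--\eqref{6.9} that the stationarity conditions force the coefficients to coincide with the SDD coefficients \eqref{6.2}--\eqref{6.3}. Your primary argument instead establishes orthogonality of the residual to $\mathcal{S}_{\mathbf{p},\boldsymbol{\Xi}}$ first and then applies Pythagoras; this is exactly the content the paper records separately as Proposition~\ref{p10}, proved there \emph{after} Proposition~\ref{p9}. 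Either ordering is fine and the two arguments are logically equivalent; your version has the minor advantage of yielding uniqueness of the minimizer and the explicit error decomposition for free, whereas the paper's calculus argument is slightly more self-contained in that it does not need to appeal forward to Proposition~\ref{p10}.
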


\begin{proof}
Any spline function $g \in \mathcal{S}_{\mathbf{p},\boldsymbol{\Xi}}$ can be expressed by
\begin{equation}
g(\mathbf{X}) =
\bar{y}_\emptyset + \displaystyle \sum_{\emptyset \ne u\subseteq\{1,\ldots,N\}}
\sum_{\mathbf{i}_u \in  \bar{\mathcal{I}}_u}
\bar{C}_{\mathbf{i}_u,\mathbf{p}_u,\boldsymbol{\Xi}_u}^u \Psi_{\mathbf{i}_u,\mathbf{p}_u,\boldsymbol{\Xi}_u}^u(\mathbf{X}_u)
\label{6.10}
\end{equation}
with some real-valued coefficients $\bar{y}_\emptyset$ and $\bar{C}_{\mathbf{i}_u,\mathbf{p}_u,\boldsymbol{\Xi}_u}^u$, $\emptyset \ne u\subseteq\{1,\ldots,N\}$, $\mathbf{i}_u \in \bar{\mathcal{I}}_u$.  To minimize $\mathbb{E}[ \{y(\mathbf{X})-g(\mathbf{X})\}^2]$, its derivatives with respect to the coefficients must be \emph{zero}, that is,
\begin{equation}
\displaystyle
\frac{\partial}{\partial \bar{y}_\emptyset}
\mathbb{E} \left[ \{y(\mathbf{X}) - g(\mathbf{X}) \}^2 \right] =
\displaystyle
\frac{\partial}{\partial \bar{C}_{\mathbf{i}_u,\mathbf{p}_u,\boldsymbol{\Xi}_u}^u}
\mathbb{E} \left[ \{y(\mathbf{X}) - g(\mathbf{X}) \}^2 \right] =
 0,
~~\mathbf{i}_u \in \bar{\mathcal{I}}_u.
\label{6.11}
\end{equation}
From the preceding paragraph, for instance, \eqref{6.8} and \eqref{6.9} and the following text, the derivatives are \emph{zero} only when the coefficients $\bar{y}_\emptyset$ and $\bar{C}_{\mathbf{i}_u,\mathbf{p}_u,\boldsymbol{\Xi}_u}^u$  match
the Fourier coefficients $y_\emptyset$ and $C_{\mathbf{i}_u,\mathbf{p}_u,\boldsymbol{\Xi}_u}^u$ of SDD defined in \eqref{6.2} and \eqref{6.3}.  Therefore, the SDD approximation is the best one from the function pool $\mathcal{S}_{\mathbf{p},\boldsymbol{\Xi}}$, as claimed.
\end{proof}

\begin{proposition}
For any $y(\mathbf{X}) \in L^2(\Omega,\mathcal{F},\mathbb{P})$, let
$y_{\mathbf{p},\boldsymbol{\Xi}}(\mathbf{X})$ from \eqref{6.1} be the SDD approximation associated with a chosen degree $\mathbf{p}$ and family of knot sequences $\boldsymbol{\Xi}$.  Then the residual error $y(\mathbf{X})-y_{\mathbf{p},\boldsymbol{\Xi}}(\mathbf{X})$ is orthogonal to the subspace
\begin{equation}
\mathcal{S}_{\mathbf{p},\boldsymbol{\Xi}} =
\boldsymbol{1} \oplus
\displaystyle
\bigoplus_{\emptyset \ne u \subseteq \{1,\ldots,N\}}
\operatorname{span} \left\{ \Psi_{\mathbf{i}_u,\mathbf{p}_u,\boldsymbol{\Xi}_u}^u(\mathbf{X}_u) \right\}_{\mathbf{i}_u \in \bar{\mathcal{I}}_{u,\mathbf{n}_u}}
\subseteq L^2(\Omega,\mathcal{F},\mathbb{P}).
\label{6.12}
\end{equation}
\label{p10}
\end{proposition}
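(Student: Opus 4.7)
The plan is to exploit the fact, established in Propositions \ref{p7} and \ref{p8}, that $\mathcal{S}_{\mathbf{p},\boldsymbol{\Xi}}$ is spanned by the orthonormal system $\{1\}\cup\{\Psi_{\mathbf{i}_u,\mathbf{p}_u,\boldsymbol{\Xi}_u}^u(\mathbf{X}_u):\emptyset\neq u\subseteq\{1,\ldots,N\},\ \mathbf{i}_u\in\bar{\mathcal{I}}_{u,\mathbf{n}_u}\}$. Because the $L^2(\Omega,\mathcal{F},\mathbb{P})$ inner product is linear in each argument, to show that the residual is orthogonal to every element of the subspace in \eqref{6.12} it is enough to check orthogonality against each basis element separately; any other $g\in\mathcal{S}_{\mathbf{p},\boldsymbol{\Xi}}$ is a finite linear combination of these basis elements, and orthogonality then extends by bilinearity.

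First I would test the residual against the constant basis element $1$. Substituting the SDD expansion \eqref{6.1}, using linearity of expectation, and invoking \eqref{5.9} to annihilate every term indexed by a nonempty $u$, one finds $\mathbb{E}[y_{\mathbf{p},\boldsymbol{\Xi}}(\mathbf{X})]=y_\emptyset$, which matches $\mathbb{E}[y(\mathbf{X})]$ by the definition \eqref{6.2}. Hence $\mathbb{E}[y(\mathbf{X})-y_{\mathbf{p},\boldsymbol{\Xi}}(\mathbf{X})]=0$.

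Next, I would fix an arbitrary $\emptyset\neq v\subseteq\{1,\ldots,N\}$ and $\mathbf{j}_v\in\bar{\mathcal{I}}_{v,\mathbf{n}_v}$, multiply the residual by $\Psi_{\mathbf{j}_v,\mathbf{p}_v,\boldsymbol{\Xi}_v}^v(\mathbf{X}_v)$, and take expectation. The contribution from the constant $y_\emptyset$ vanishes by \eqref{5.9}. In the double sum over $u$ and $\mathbf{i}_u$, the second-moment formula \eqref{5.10} collapses every term to zero except the one with $u=v$ and $\mathbf{i}_u=\mathbf{j}_v$, which contributes $C_{\mathbf{j}_v,\mathbf{p}_v,\boldsymbol{\Xi}_v}^v$. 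By the defining identity \eqref{6.3}, this coincides with $\mathbb{E}[y(\mathbf{X})\,\Psi_{\mathbf{j}_v,\mathbf{p}_v,\boldsymbol{\Xi}_v}^v(\mathbf{X}_v)]$, so the two contributions cancel and orthogonality is established for each basis element.

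There is essentially no analytic obstacle: the proposition is a direct consequence of the fact that $y_{\mathbf{p},\boldsymbol{\Xi}}$ is an orthogonal projection (it also follows abstractly from the best-approximation identity in Proposition \ref{p9}, via the normal equations for the convex quadratic $\mathbb{E}[(y-g)^2]$ on $\mathcal{S}_{\mathbf{p},\boldsymbol{\Xi}}$). The only mildly delicate point is the bookkeeping of the nested index sets $u$, $v$, $\mathbf{i}_u$, $\mathbf{j}_v$ and making sure the orthonormality statement \eqref{5.10} is applied in both its $u\neq v$ and $u=v,\ \mathbf{i}_u\neq\mathbf{j}_v$ guises, which is routine.
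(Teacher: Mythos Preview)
Your proposal is correct and follows essentially the same approach as the paper: both arguments rely on the orthonormality relations \eqref{5.9}--\eqref{5.10} together with the coefficient definitions \eqref{6.2}--\eqref{6.3} to show that the residual annihilates every element of $\mathcal{S}_{\mathbf{p},\boldsymbol{\Xi}}$. The only cosmetic difference is that the paper tests against a generic $g\in\mathcal{S}_{\mathbf{p},\boldsymbol{\Xi}}$ written in its basis expansion all at once, whereas you check each basis element separately and then invoke bilinearity; the underlying computation is identical.
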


\begin{proof}
Let $g$ described in \eqref{6.10}, with any real-valued coefficients $\bar{y}_\emptyset$ and $\bar{C}_{\mathbf{i}_u,\mathbf{p}_u,\boldsymbol{\Xi}_u}^u$, $\emptyset \ne u\subseteq\{1,\ldots,N\}$, $\mathbf{i}_u \in \bar{\mathcal{I}}_u$, be an arbitrary element of $\mathcal{S}_{\mathbf{p},\boldsymbol{\Xi}}$.  Then
\begin{equation}
\begin{array}{ll}
\displaystyle
  &
\mathbb{E}
\left[  \left\{ y(\mathbf{X}) - y_{\mathbf{p},\boldsymbol{\Xi}}(\mathbf{X}) \right\}
g(\mathbf{X}) \right] \\
= &
\mathbb{E}
\left[
\left\{
y(\mathbf{X}) -
y_\emptyset - \displaystyle \sum_{\emptyset \ne u\subseteq\{1,\ldots,N\}}
\sum_{\mathbf{j}_u \in  \bar{\mathcal{I}}_u}
C_{\mathbf{i}_u,\mathbf{p}_u,\boldsymbol{\Xi}_u}^u \Psi_{\mathbf{j}_u,\mathbf{p}_u,\boldsymbol{\Xi}_u}^u(\mathbf{X}_u)
\right\}
\right.
\\
  &
\times
\left.
\left\{
\displaystyle
\bar{y}_\emptyset + \displaystyle \sum_{\emptyset \ne u\subseteq\{1,\ldots,N\}}
\sum_{\mathbf{i}_u \in  \bar{\mathcal{I}}_u}
\bar{C}_{\mathbf{i}_u,\mathbf{p}_u,\boldsymbol{\Xi}_u}^u \Psi_{\mathbf{i}_u,\mathbf{p}_u,\boldsymbol{\Xi}_u}^u(\mathbf{X}_u)
\right\}
\right]
\\
= &
y_\emptyset \bar{y}_\emptyset +
\displaystyle \sum_{\emptyset \ne u\subseteq\{1,\ldots,N\}}
\sum_{\mathbf{i}_u \in \bar{\mathcal{I}}_u}
C_{\mathbf{i}_u,\mathbf{p}_u,\boldsymbol{\Xi}_u}^u  \bar{C}_{\mathbf{i}_u,\mathbf{p}_u,\boldsymbol{\Xi}_u}^u  -
y_\emptyset \bar{y}_\emptyset -
\displaystyle \sum_{\emptyset \ne u\subseteq\{1,\ldots,N\}}
\sum_{\mathbf{i}_u \in \bar{\mathcal{I}}_u}
C_{\mathbf{i}_u,\mathbf{p}_u,\boldsymbol{\Xi}_u}^u  \bar{C}_{\mathbf{i}_u,\mathbf{p}_u,\boldsymbol{\Xi}_u}^u
\\
= &
0,
\end{array}
\label{6.13}
\end{equation}
where the second equality follows from \eqref{6.2}, \eqref{6.3}, and Proposition \ref{p7}.  Hence, the proposition is proved.
\end{proof}

\subsection{Error bound and convergence}
The results in this section show an error bound describing the approximation power of SDD proposed. However, a theoretical clarification on the boundedness of a linear operator and a formal definition of modulus of smoothness need to be presented first.

\begin{proposition}
The projection operator $P_{\mathcal{S}_{\mathbf{p},\boldsymbol{\Xi}}}:L^2(\mathbb{A}^N,\mathcal{B}^{N},f_{\mathbf{X}}d\mathbf{x}) \to \mathcal{S}_{\mathbf{p},\boldsymbol{\Xi}}$
is a linear, bounded operator.
\label{p11}
\end{proposition}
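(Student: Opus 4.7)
The plan is to verify linearity and boundedness separately, using the orthonormality already established in Proposition \ref{p7} together with the orthogonality of the residual from Proposition \ref{p10}. Because the subspace $\mathcal{S}_{\mathbf{p},\boldsymbol{\Xi}}$ is finite-dimensional (its dimension $\prod_{k=1}^N n_k$ was computed just after Definition \ref{d8}), it is automatically closed in the Hilbert space $L^2(\mathbb{A}^N,\mathcal{B}^N,f_{\mathbf{X}}d\mathbf{x})$, so the orthogonal projection onto it is well-defined, and the classical Hilbert-space fact that such a projection has operator norm at most one will do all the work.

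For linearity, I would observe that the expansion coefficients $y_\emptyset$ and $C_{\mathbf{i}_u,\mathbf{p}_u,\boldsymbol{\Xi}_u}^u$ defined in \eqref{6.2} and \eqref{6.3} are Lebesgue integrals of $y$ against fixed weight functions (the density $f_{\mathbf{X}}$ and the fixed basis splines), hence linear functionals of $y$. Since $P_{\mathcal{S}_{\mathbf{p},\boldsymbol{\Xi}}} y$ is a finite linear combination of the fixed basis functions with coefficients depending linearly on $y$, the operator $P_{\mathcal{S}_{\mathbf{p},\boldsymbol{\Xi}}}$ inherits linearity directly from the linearity of the expectation.

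For boundedness, I would compute the squared norm of the projection using the orthonormality relation \eqref{5.10}, together with $\mathbb{E}[\Psi_{\mathbf{i}_u,\mathbf{p}_u,\boldsymbol{\Xi}_u}^u(\mathbf{X}_u)] = 0$ from \eqref{5.9}, to obtain
\[
\|P_{\mathcal{S}_{\mathbf{p},\boldsymbol{\Xi}}} y\|_{L^2(\mathbb{A}^N,\mathcal{B}^{N},f_{\mathbf{X}}d\mathbf{x})}^2
= y_\emptyset^2 + \sum_{\emptyset \ne u \subseteq \{1,\ldots,N\}} \sum_{\mathbf{i}_u \in \bar{\mathcal{I}}_{u,\mathbf{n}_u}} \bigl( C_{\mathbf{i}_u,\mathbf{p}_u,\boldsymbol{\Xi}_u}^u \bigr)^2.
\]
Then I would invoke the Pythagorean decomposition afforded by Proposition \ref{p10}: writing $y = P_{\mathcal{S}_{\mathbf{p},\boldsymbol{\Xi}}} y + (y - P_{\mathcal{S}_{\mathbf{p},\boldsymbol{\Xi}}} y)$ with the residual orthogonal to $\mathcal{S}_{\mathbf{p},\boldsymbol{\Xi}}$, one has
\[
\|y\|_{L^2(\mathbb{A}^N,\mathcal{B}^{N},f_{\mathbf{X}}d\mathbf{x})}^2 = \|P_{\mathcal{S}_{\mathbf{p},\boldsymbol{\Xi}}} y\|_{L^2(\mathbb{A}^N,\mathcal{B}^{N},f_{\mathbf{X}}d\mathbf{x})}^2 + \|y - P_{\mathcal{S}_{\mathbf{p},\boldsymbol{\Xi}}} y\|_{L^2(\mathbb{A}^N,\mathcal{B}^{N},f_{\mathbf{X}}d\mathbf{x})}^2,
\]
and dropping the nonnegative residual term gives $\|P_{\mathcal{S}_{\mathbf{p},\boldsymbol{\Xi}}} y\|_{L^2} \le \|y\|_{L^2}$, so the operator norm satisfies $\|P_{\mathcal{S}_{\mathbf{p},\boldsymbol{\Xi}}}\| \le 1$.

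There is no real obstacle here; the argument is essentially bookkeeping on top of results already proved. The only point requiring a touch of care is making sure the constant function $1$ is genuinely orthonormal to every $\Psi_{\mathbf{i}_u,\mathbf{p}_u,\boldsymbol{\Xi}_u}^u(\mathbf{X}_u)$ with $\mathbf{i}_u \in \bar{\mathcal{I}}_{u,\mathbf{n}_u}$, which is exactly the content of \eqref{5.9} combined with $\mathbb{E}[1 \cdot 1] = 1$, so the constant term $y_\emptyset^2$ appears on the same footing as the other squared coefficients in the Parseval-type identity above.
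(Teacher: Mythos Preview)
Your argument is correct and is precisely the standard Hilbert-space verification one would expect: linearity from the integral definition of the Fourier coefficients, and boundedness with operator norm at most one from the Pythagorean identity supplied by Proposition~\ref{p10}. The paper itself omits the proof of this proposition entirely, referring instead to the analogous result in the prior SCE work \cite{rahman20}, so there is no in-paper argument to compare against; your write-up is exactly the kind of self-contained justification the reader would need to reconstruct.
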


The proof is omitted here as it is similar to the one presented for the projection operator in the context of SCE \cite{rahman20}. Interested readers should consult the prior work.

\begin{definition}[Schumaker \cite{schumaker07}]
Given a positive integer $\alpha_k \in \mathbb{N}$ and $0 < h_k \le (b_k-a_k)/\alpha_k$, the $\alpha_k$th modulus of smoothness of a function $y \in L^2[a_k,b_k]$ in the $L^2$-norm is defined by
\begin{equation}
\omega_{\alpha_k}(y;h_k)_{L^2[a_k,b_k]} := \sup_{ 0 \le u_k \le h_k}
\left\| \Delta_{u_k}^{\alpha_k} y\right\|_{L^2[a_k,b_k-\alpha_k u_k]},~~h_k > 0,
\label{6.14}
\end{equation}
where
\[
\Delta_{u_k}^{\alpha_k} y(h_k) := \sum_{i=0}^{\alpha_k} (-1)^{\alpha_k-i} \binom{\alpha_k}{i} y(h_k+iu_k)
\]
is the $\alpha_k$th forward difference of $y$ at $h_k$ for any $0 \le u_k \le h_k$.

Moreover, given a multi-index $\boldsymbol{\alpha}=(\alpha_1,\ldots,\alpha_N) \in \mathbb{N}^N$ and any vector $\mathbf{u} \ge \boldsymbol{0}$, let
\[
\Delta_{\mathbf{u}}^{\boldsymbol{\alpha}} = \prod_{k=1}^N \Delta_{u_k}^{\alpha_k}.
\]
Then the $\boldsymbol{\alpha}$-modulus of smoothness of a function $y \in L^2[\mathbb{A}^N]$ in the $L^2$-norm is the function defined by
\begin{equation}
\omega_{\boldsymbol{\alpha}}(y;\mathbf{h})_{L^2[\mathbb{A}^N]} :=
\sup_{ \boldsymbol{0} \le \mathbf{u} \le \mathbf{h}}
\left\| \Delta_{\mathbf{u}}^{\boldsymbol{\alpha}} y \right\|_{L^2[\mathbb{A}_{\boldsymbol{\alpha},\mathbf{u}}^N]},~~
\mathbf{h} > \boldsymbol{0},
\label{6.15}
\end{equation}
where
\[
\mathbb{A}_{\boldsymbol{\alpha},\mathbf{u}}^N = \left\{  \mathbf{x} \in \mathbb{A}^N:
\mathbf{x} + \boldsymbol{\alpha}\otimes\mathbf{u} \in \mathbb{A}^N \right\},~~
\boldsymbol{\alpha}\otimes\mathbf{u} = (\alpha_1 u_1,\ldots,\alpha_N u_N).
\]
\label{d10}
\end{definition}

\begin{lemma}[Rahman \cite{rahman20}]
Let $L^2(\mathbb{A}^N)$ be an unweighted Hilbert space defined as
\begin{equation}
L^2\left( \mathbb{A}^N \right):=
\left\{ y:\mathbb{A}^N \to \mathbb{R}:
\int_{\mathbb{A}^N} |y(\mathbf{x})|^2 d \mathbf{x} < \infty \right\}
\label{6.16}
\end{equation}
with standard norm $\| \cdot \|_{L^2(\mathbb{A}^N)}$.  Then, for any function $y(\mathbf{x}) \in L^2(\mathbb{A}^N,\mathcal{B}^{N},f_{\mathbf{X}}d\mathbf{x})$, it holds that
\begin{equation}
\left\|  y(\mathbf{x}) \right\|_{L^2(\mathbb{A}^N,\mathcal{B}^{N},f_{\mathbf{X}}d\mathbf{x})}  \le
\sqrt{\left\|  f_{\mathbf{X}}(\mathbf{x})\right\|_{L^\infty(\mathbb{A}^N)}}
\left\|  y(\mathbf{x}) \right\|_{L^2(\mathbb{A}^N)},
\label{6.17}
\end{equation}
where $\| \cdot \|_{L^\infty(\mathbb{A}^N)}$ is the infinity norm. Here, it is further assumed that $f_{\mathbf{X}} \in L^\infty(\mathbb{A}^N)$.
\label{l1}
\end{lemma}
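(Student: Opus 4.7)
The plan is to argue directly from the definition of the weighted $L^2$-norm and the essential-supremum bound afforded by the hypothesis $f_{\mathbf{X}} \in L^\infty(\mathbb{A}^N)$. First, I would expand the square of the weighted norm as an integral over $\mathbb{A}^N$ of $|y(\mathbf{x})|^2 f_{\mathbf{X}}(\mathbf{x})$. Since $f_{\mathbf{X}}(\mathbf{x})$ is a nonnegative measurable function that, by hypothesis, is bounded above by $\|f_{\mathbf{X}}\|_{L^\infty(\mathbb{A}^N)}$ for almost every $\mathbf{x} \in \mathbb{A}^N$, the next step is to invoke monotonicity of the Lebesgue integral to pull this essential-sup constant outside the integral.

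Concretely, the chain of inequalities reads
\[
\|y(\mathbf{x})\|_{L^2(\mathbb{A}^N,\mathcal{B}^{N},f_{\mathbf{X}}d\mathbf{x})}^2
= \int_{\mathbb{A}^N} |y(\mathbf{x})|^2 f_{\mathbf{X}}(\mathbf{x})\, d\mathbf{x}
\le \|f_{\mathbf{X}}\|_{L^\infty(\mathbb{A}^N)} \int_{\mathbb{A}^N} |y(\mathbf{x})|^2 \, d\mathbf{x}
= \|f_{\mathbf{X}}\|_{L^\infty(\mathbb{A}^N)}\, \|y(\mathbf{x})\|_{L^2(\mathbb{A}^N)}^2,
\]
and taking square roots of both sides produces the claimed bound \eqref{6.17}. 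Equivalently, the inequality is a one-line application of H\"older's inequality with conjugate exponents $1$ and $\infty$ applied to the pair $(|y|^2, f_{\mathbf{X}})$, so no inductive or approximation scheme is needed.

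There is essentially no obstacle to overcome. The only subtle point worth flagging is the degenerate case: when $y \notin L^2(\mathbb{A}^N)$ in the unweighted sense, the right-hand side of \eqref{6.17} equals $+\infty$ and the inequality is trivially true; when $y \in L^2(\mathbb{A}^N)$, the chain above remains finite throughout, and the bound is nontrivial. The boundedness assumption $f_{\mathbf{X}} \in L^\infty(\mathbb{A}^N)$ is essential, because without it no finite multiplicative constant can in general control the weighted norm by the unweighted one; this is why the hypothesis is stated explicitly in the final sentence of the lemma.
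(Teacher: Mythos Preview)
Your argument is correct and complete: bounding $f_{\mathbf{X}}$ by its essential supremum inside the integral and taking square roots is exactly the right one-line proof, and your remark about the degenerate case $y\notin L^2(\mathbb{A}^N)$ is a nice touch. Note, however, that the paper does not actually supply its own proof of this lemma---it is quoted from the cited prior work \cite{rahman20}---so there is no in-paper argument to compare against; your direct derivation is precisely what any proof of this inequality would look like.
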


\begin{proposition}
For any $y(\mathbf{X}) \in L^2(\Omega, \mathcal{F}, \mathbb{P})$, a sequence of SDD approximations
$\{y_{\mathbf{p},\boldsymbol{\Xi}}(\mathbf{X})\}_{\mathbf{h} > \boldsymbol{0}}$, with $\mathbf{h}=(h_1,\ldots,h_N)$ representing a vector of the largest element sizes, converges to $y(\mathbf{X})$ in mean-square, \emph{i.e.},
\[
\lim_{\mathbf{h} \to \boldsymbol{0}} \mathbb{E}\left[ \left|
y(\mathbf{X})-y_{\mathbf{p},\boldsymbol{\Xi}}(\mathbf{X})\right|^2 \right] = 0.
\]
Furthermore, the sequence of SDD approximations converges in probability, that is, for any $\epsilon>0$,
\[
\lim_{\mathbf{h} \to \boldsymbol{0}} \mathbb{P}\left( \left|y(\mathbf{X})-y_{\mathbf{p},\boldsymbol{\Xi}}(\mathbf{X})\right| > \epsilon \right) = 0;
\]
and converges in distribution, that is, for all points $\xi \in \mathbb{R}$ where $F(\xi)$ is continuous,
\[
\lim_{\mathbf{h} \to \boldsymbol{0}} F_{\mathbf{p},\boldsymbol{\Xi}}(\xi) = F(\xi)
\]
such that $F_{\mathbf{p},\boldsymbol{\Xi}}(\xi):=\mathbb{P}(y_{\mathbf{p},\boldsymbol{\Xi}}(\mathbf{X}) \le \xi)$ and $F(\xi):=\mathbb{P}(y(\mathbf{X}) \le \xi)$ are distribution functions of $y_{\mathbf{p},\boldsymbol{\Xi}}(\mathbf{X})$ and $y(\mathbf{X})$, respectively.  If $F(\xi)$ is continuous on $\mathbb{R}$, then the distribution functions converge uniformly.
\label{p12}
\end{proposition}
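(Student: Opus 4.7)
The plan is to prove mean-square convergence first, since the remaining convergence modes follow by standard probabilistic reductions. The key device is the best-approximation property of Proposition \ref{p9}: for any $\mathbf{h}>\boldsymbol{0}$ and any $g \in \mathcal{S}_{\mathbf{p},\boldsymbol{\Xi}}$, one has
\[
\mathbb{E}\bigl[\,|y(\mathbf{X})-y_{\mathbf{p},\boldsymbol{\Xi}}(\mathbf{X})|^2\,\bigr]
= \bigl\|y-y_{\mathbf{p},\boldsymbol{\Xi}}\bigr\|^2_{L^2(\mathbb{A}^N,\mathcal{B}^N,f_{\mathbf{X}}d\mathbf{x})}
\le \|y-g\|^2_{L^2(\mathbb{A}^N,\mathcal{B}^N,f_{\mathbf{X}}d\mathbf{x})}.
\]
Hence it suffices to exhibit \emph{some} sequence of splines $g_{\mathbf{h}} \in \mathcal{S}_{\mathbf{p},\boldsymbol{\Xi}}$ with $\|y-g_{\mathbf{h}}\|_{L^2(f_{\mathbf{X}}d\mathbf{x})}\to 0$ as $\mathbf{h}\to\boldsymbol{0}$.

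To construct $g_{\mathbf{h}}$, I would first approximate $y$ by a continuous function: given $\epsilon>0$, the density of $C(\mathbb{A}^N)$ in $L^2(\mathbb{A}^N,\mathcal{B}^N,f_{\mathbf{X}}d\mathbf{x})$ yields $\tilde{y}\in C(\mathbb{A}^N)$ with $\|y-\tilde{y}\|_{L^2(f_{\mathbf{X}}d\mathbf{x})}<\epsilon$. Since $\tilde{y}$ is continuous on a compact set, it lies in the unweighted $L^2(\mathbb{A}^N)$, so the standard tensor-product spline approximation theorem (Schumaker's Theorem 12.7 applied dimensionwise, in the same spirit used for SCE in \cite{rahman20}) produces $g_{\mathbf{h}}\in\mathcal{S}_{\mathbf{p},\boldsymbol{\Xi}}$ with
\[
\|\tilde{y}-g_{\mathbf{h}}\|_{L^2(\mathbb{A}^N)}
\le C\,\omega_{\mathbf{p}+\mathbf{1}}(\tilde{y};\mathbf{h})_{L^2(\mathbb{A}^N)},
\]
where the $\boldsymbol{\alpha}$-modulus of smoothness from Definition \ref{d10} tends to zero as $\mathbf{h}\to\boldsymbol{0}$ by uniform continuity of $\tilde{y}$. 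Lemma \ref{l1} then transfers the bound to the weighted norm, giving $\|\tilde{y}-g_{\mathbf{h}}\|_{L^2(f_{\mathbf{X}}d\mathbf{x})}\le \sqrt{\|f_{\mathbf{X}}\|_{L^\infty(\mathbb{A}^N)}}\,\|\tilde{y}-g_{\mathbf{h}}\|_{L^2(\mathbb{A}^N)}$. Combining the triangle inequality with the best-approximation bound,
\[
\|y-y_{\mathbf{p},\boldsymbol{\Xi}}\|_{L^2(f_{\mathbf{X}}d\mathbf{x})}
\le \|y-\tilde{y}\|_{L^2(f_{\mathbf{X}}d\mathbf{x})} + \|\tilde{y}-g_{\mathbf{h}}\|_{L^2(f_{\mathbf{X}}d\mathbf{x})}
< \epsilon + C\sqrt{\|f_{\mathbf{X}}\|_{L^\infty}}\,\omega_{\mathbf{p}+\mathbf{1}}(\tilde{y};\mathbf{h})_{L^2(\mathbb{A}^N)}.
\]
Sending $\mathbf{h}\to\boldsymbol{0}$ and then $\epsilon\to 0$ yields mean-square convergence.

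The other two modes are then routine. Convergence in probability follows from Markov's inequality, $\mathbb{P}(|y-y_{\mathbf{p},\boldsymbol{\Xi}}|>\epsilon)\le \epsilon^{-2}\mathbb{E}[|y-y_{\mathbf{p},\boldsymbol{\Xi}}|^2]\to 0$. Convergence in probability implies convergence in distribution at every continuity point of $F$, and when $F$ is continuous on all of $\mathbb{R}$ the pointwise convergence of $F_{\mathbf{p},\boldsymbol{\Xi}}$ to $F$ upgrades to uniform convergence by P\'olya's theorem (monotonicity of the CDFs plus continuity of the limit forces uniformity on compacts, with the tails handled by $F(\pm\infty)=0,1$).

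The main obstacle is the multivariate spline approximation estimate in the second step: producing $g_{\mathbf{h}}$ with a quantitative bound in terms of $\omega_{\mathbf{p}+\mathbf{1}}(\tilde{y};\mathbf{h})$. On the tensor-product mesh this can be handled by iterating univariate B-spline quasi-interpolants (de~Boor--Fix or local $L^2$-projections) coordinate by coordinate and invoking Fubini, which is exactly the route taken for SCE in \cite{rahman20}; alternatively, one appeals directly to the Jackson-type estimate in Schumaker's monograph. Everything else -- the density of $C(\mathbb{A}^N)$, Lemma \ref{l1}, Markov's inequality, and P\'olya's theorem -- is standard.
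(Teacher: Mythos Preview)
Your proposal is correct and hits the same key ingredients as the paper's proof --- Lemma~\ref{l1} to pass from the weighted to the unweighted $L^2$-norm, a Jackson-type estimate from Schumaker controlling spline approximation error by the tensor modulus of smoothness $\omega_{\mathbf{p}+\mathbf{1}}(\cdot;\mathbf{h})$, and the standard chain $L^2 \Rightarrow$ probability $\Rightarrow$ distribution (with P\'olya for uniformity).

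The organization differs slightly. The paper argues more directly: it invokes the boundedness of the projection operator (Proposition~\ref{p11}) together with Schumaker's Theorem~12.8 to obtain the single bound $\|y-y_{\mathbf{p},\boldsymbol{\Xi}}\|_{L^2(f_{\mathbf{X}}d\mathbf{x})} \le C\,\omega_{\mathbf{p}+\mathbf{1}}(y;\mathbf{h})_{L^2(\mathbb{A}^N)}$ applied to $y$ itself, and then argues that this modulus tends to zero. You instead use the best-approximation property (Proposition~\ref{p9}) and an $\epsilon/2$-argument through a continuous intermediary $\tilde{y}$, applying the Jackson estimate only to $\tilde{y}$. Your route is a touch longer but arguably cleaner on one point: the vanishing of $\omega_{\boldsymbol{\alpha}}(\tilde{y};\mathbf{h})$ for continuous $\tilde{y}$ follows immediately from uniform continuity on the compact domain, whereas the paper's verification that $\omega_{\boldsymbol{\alpha}}(y;\mathbf{h})\to 0$ for a general $L^2$ function relies on an interchange-of-limit argument that is correct but stated somewhat informally. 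Either way, the substance is the same.
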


\begin{proof}
According to Proposition \ref{p11}, $P_{\mathcal{S}_{\mathbf{p},\boldsymbol{\Xi}}}$ is a linear, bounded operator.  With the tensor modulus of stiffness in mind, use Lemma \ref{l1} and Theorem 12.8 of Schumaker's book \cite{schumaker07} to claim that the $L^2$-error from the SDD approximation is bounded by
\begin{equation}
\left\|  y(\mathbf{x}) - y_{\mathbf{p},\boldsymbol{\Xi}}(\mathbf{x})
\right\|_{L^2(\mathbb{A}^N,\mathcal{B}^{N},f_{\mathbf{X}}d\mathbf{x})}  \le
C \omega_{\mathbf{p}+\boldsymbol{1}}(y;\mathbf{h})_{L^2(\mathbb{A}^N)},
\label{6.16}
\end{equation}
where $C$ is a constant depending only on $\mathbf{p}$, $N$, and $f_{\mathbf{X}}(\mathbf{x})$, and
$\mathbf{p}+\boldsymbol{1} = (p_1+1,\ldots,p_N+1)$.

From Definition \ref{d10}, as $h_k$ approaches \emph{zero}, so does $0 \le u_k \le h_k$.  Taking the limit $u_k \to 0$ inside the integral of the $L^2$ norm, which is permissible for a finite interval and uniformly convergent integrand, the forward difference
\[
\displaystyle
\lim_{u_k \to 0}
\Delta_{u_k}^{\alpha_k} y(x_k) =
y(x_k) \sum_{i=0}^{\alpha_k} (-1)^{\alpha_k-i} \binom{\alpha_k}{i} = 0,
\]
as the sum vanishes for any $\alpha_k \in \mathbb{N}$.  Consequently, the coordinate modulus of smoothness
\[
\omega_{\alpha_k}(y;h_k)_{L^2[a_k,b_k]} \to 0 ~~\text{as}~ h_k \to 0~~\forall \alpha_k \in \mathbb{N}.
\]
Following similar considerations, the tensor modulus of smoothness
\[
\omega_{\boldsymbol{\alpha}}(y;\mathbf{h})_{L^2(\mathbb{A}^N)} \to 0 ~~\text{as}~\mathbf{h} \to \boldsymbol{0}~~\forall \boldsymbol{\alpha} \in \mathbb{N}^N.
\]
Therefore,
\begin{equation}
\displaystyle
\lim_{\mathbf{h} \to \boldsymbol{0}}
\left\|  y(\mathbf{x}) - y_{\mathbf{p},\boldsymbol{\Xi}}(\mathbf{x})
\right\|_{L^2(\mathbb{A}^N,\mathcal{B}^{N},f_{\mathbf{X}}d\mathbf{x})}  = 0,
\label{6.17}
\end{equation}
thus proving the mean-square convergence of $y_{\mathbf{p},\boldsymbol{\Xi}}(\mathbf{X})$ to $y(\mathbf{X})$ for any degree $\mathbf{p} \in \mathbb{N}_0^N$.

In addition, as the SDD approximation converges in mean-square, it does so in probability.  Moreover, as the expansion converges in probability, it also converges in distribution.
\end{proof}

\subsection{Truncation}
According to \eqref{6.1}, the full SDD contains $\prod_{k=1}^N n_k$ basis functions or coefficients.  Therefore, SDD also suffers from the curse of dimensionality if all terms of SDD are retained in the concomitant expansion.  However, in a practical setting, the output function $y(\mathbf{X})$ is likely to have an effective dimension \cite{bellman57} much lower than $N$, meaning that the right side of \eqref{6.1} can be effectively approximated by a sum of lower-dimensional component functions of $y_{\mathbf{p},\boldsymbol{\Xi}}(\mathbf{X})$, but still maintain all random variables $\mathbf{X}$ of a high-dimensional UQ problem.

A straightforward approach to achieving this truncation entails keeping all orthonormal splines in at most $1 \le S \le N$ variables, thereby retaining the degrees of interaction among input variables less than or equal to $S$.  The result is an $S$-variate SDD approximation
\begin{equation}
y_{S,\mathbf{p},\boldsymbol{\Xi}}(\mathbf{X}) :=
\displaystyle
y_\emptyset +
\sum_{\substack{\emptyset \ne u \subseteq \{1,\ldots,N\} \\ 1 \le |u| \le S}}
\sum_{\mathbf{i}_u \in  \bar{\mathcal{I}}_{u,\mathbf{n}_u}}
C_{\mathbf{i}_u,\mathbf{p}_u,\boldsymbol{\Xi}_u}^u \Psi_{\mathbf{i}_u,\mathbf{p}_u,\boldsymbol{\Xi}_u}^u(\mathbf{X}_u)
\label{6.18}
\end{equation}
of $y(\mathbf{X})$, comprising
\begin{equation}
L_{S,\mathbf{p},\boldsymbol{\Xi}} =
\displaystyle
1 +
\sum_{\substack{\emptyset \ne u \subseteq \{1,\ldots,N\} \\ 1 \le |u| \le S}}
\prod_{k \in u} (n_k-1) \le
\prod_{k=1}^N n_k
\label{6.19}
\end{equation}
expansion coefficients including $y_\emptyset$.  Here, the upper bound of $L_{S,\mathbf{p},\boldsymbol{\Xi}}$ kicks in only when $S=N$. In which case, there is no gain in computational efficiency by the SDD approximation.  However, if $S<<N$, as it is anticipated to hold in real-life applications, the number of coefficients in the SDD approximation drops precipitously, ushering in substantial savings of computational effort.

It is important to clarify a few things about the truncated SDD proposed.  First, the right side of \eqref{6.18} contains sums of at most $S$-dimensional orthonormal splines, representing at most $S$-variate SDD component functions of $y_{\mathbf{p},\boldsymbol{\Xi}}(\mathbf{X})$. Therefore, the term ``$S$-variate'' used for the truncated SDD approximation should be interpreted in the context of including at most $S$-degree interaction of input variables, even though $y_{S,\mathbf{p},\boldsymbol{\Xi}}(\mathbf{X})$ is strictly an $N$-variate function.

Second, if $S=N$, then $y_{N,\mathbf{p},\boldsymbol{\Xi}}(\mathbf{X})=y_{\mathbf{p},\boldsymbol{\Xi}}(\mathbf{X})$.  Therefore, the sequence of SDD approximations
$\{y_{S,\mathbf{p},\boldsymbol{\Xi}}(\mathbf{X})\}_{1\le S\le N,~\mathbf{h} > \boldsymbol{0}}$ converges to $y(\mathbf{X})$ in mean-square, that is,
\[
\lim_{S \to N,~\mathbf{h} \to \boldsymbol{0}} \mathbb{E}\left[ \left|
y(\mathbf{X})-y_{S,\mathbf{p},\boldsymbol{\Xi}}(\mathbf{X})\right|^2 \right] = 0.
\]
Furthermore, the sequence of SDD approximations converges in probability, that is, for any $\epsilon>0$,
\[
\lim_{S \to N,~\mathbf{h} \to \boldsymbol{0}} \mathbb{P}\left( \left|y(\mathbf{X})-y_{S,\mathbf{p},\boldsymbol{\Xi}}(\mathbf{X}\right| > \epsilon \right) = 0;
\]
and converges in distribution, that is, for any $\xi \in \mathbb{R}$,
\[
\lim_{S \to N,~\mathbf{h} \to \boldsymbol{0}} F_{S,\mathbf{p},\boldsymbol{\Xi}}(\xi) = F(\xi)
\]
such that $F_{S,\mathbf{p},\boldsymbol{\Xi}}(\xi):=\mathbb{P}(y_{S,\mathbf{p},\boldsymbol{\Xi}}(\mathbf{X}) \le \xi)$ is the distribution function of $y_{S,\mathbf{p},\boldsymbol{\Xi}}(\mathbf{X})$.

Finally, the properties entailing the best approximation and the residual error being orthogonal, as described by Propositions \ref{p9} and \ref{p10}, also apply in the context of the truncated SDD approximation.

\subsection{Computational effort}
Due to identical hierarchical structures of function decompositions, the SDD method has the same order of computational complexity as the existing PDD method.  To expound on SDD's scalability with respect to the problem size or stochastic dimension $N$, consider the univariate ($S=1$) SDD approximation $y_{1,\mathbf{p},\boldsymbol{\Xi}}(\mathbf{X})$ and bivariate ($S=2$) SDD approximation $y_{2,\mathbf{p},\boldsymbol{\Xi}}(\mathbf{X})$, expressed by
\begin{equation}
y_{1,\mathbf{p},\boldsymbol{\Xi}}(\mathbf{X})=
y_\emptyset +
\displaystyle \sum_{k=1}^N \sum_{i_k=2}^{n_k}
C_{i_k,p_k,\boldsymbol{\xi}_k}^k
\psi_{{i_k,p_k,\boldsymbol{\xi}_k}}^k(X_k)
~~\text{and}~~
\label{6.19b}
\end{equation}
\begin{equation}
\begin{array}{rcl}
y_{2,\mathbf{p},\boldsymbol{\Xi}}(\mathbf{X})
&  =  &
y_\emptyset +
\displaystyle \sum_{k=1}^N \sum_{i_k=2}^{n_k}
C_{i_k,p_k,\boldsymbol{\xi}_k}^k
\psi_{{i_k,p_k,\boldsymbol{\xi}_k}}^k(X_k)+
\\
&     & +
\displaystyle \sum_{k_1=1}^{N-1} \sum_{k_2=k_1+1}^{N}
\displaystyle \sum_{i_{k_1}=2}^{n_{k_1}} \sum_{i_{k_2}=2}^{n_{k_2}}
C_{(i_{k_1},i_{k_2}),(p_{k_1},p_{k_2}),\{\boldsymbol{\xi}_{k_1},\boldsymbol{\xi}_{k_2}\}}^{\{k_1,k_2\}}
\psi_{(i_{k_1},i_{k_2}),(p_{k_1},p_{k_2}),\{\boldsymbol{\xi}_{k_1},\boldsymbol{\xi}_{k_2}\}}^{\{k_1,k_2\}}
\left( X_{k_1},X_{k_2} \right),
\end{array}
\label{6.19c}
\end{equation}
respectively. In either approximation, the requisite computational effort can be judged by the associated number of basis functions involved.  For instance, in \eqref{6.19b} and \eqref{6.19c}, there are, respectively,
\[
1+\displaystyle \sum_{k=1}^N (n_k-1)~~\text{and}~~
1+\displaystyle \sum_{k=1}^N (n_k-1)+
\displaystyle \sum_{k_1=1}^{N-1} \sum_{k_2=k_1+1}^{N} (n_{k_1}-1)(n_{k_2}-1)
\]
basis functions. Hence, given the values of $n_k$, $k=1,\ldots,N$, which are decided by $\mathbf{p}$ and $\boldsymbol{\Xi}$, the computational effort with respect to $N$ grows linearly for univariate approximation and quadratically for bivariate approximation.  For example, when $N=15$ and $n_1=\cdots=n_{15}=5$, the univariate and bivariate SDD approximations involve 61 and 1741 basis functions, respectively. In contrast, the numbers of basis functions in the SCE and tensor-product-truncated PCE approximations with five bases in each direction both jump to $5^{15}$, which is significantly greater than that required by either of the two SDD approximations.  In general,  from \eqref{6.19}, the computational effort by an $S$-variate SDD approximation scales $S$-degree-polynomially with respect to $N$.  Therefore, the computational complexity of a truncated SDD is polynomial, as opposed to exponential, thereby deflating the curse of dimensionality to a substantial extent.

\subsection{A few special cases}
The truncated SDD can be viewed as a generalized version subsuming several well-known expansions, namely, SCE, PDD, and PCE, that are commonly used or known in the UQ community.  Three special cases clarify this observation.

\vspace{0.1in}
\begin{enumerate}[{$(1)$}]

\item
Consider the first case where $S=N$.  The resulting SDD approximation is the full SDD, that is, $y_{N,\mathbf{p},\boldsymbol{\Xi}}(\mathbf{X})=y_{\mathbf{p},\boldsymbol{\Xi}}(\mathbf{X})$.  Then the terms of SDD can be reshuffled to be written as
\begin{subequations}
\begin{alignat}{2}
y_{N,\mathbf{p},\boldsymbol{\Xi}}(\mathbf{X}) = & ~y_{\mathbf{p},\boldsymbol{\Xi}}(\mathbf{X}) =
\displaystyle
\sum_{\mathbf{i} \in  \mathcal{I}_{\mathbf{n}}}
C_{\mathbf{i},\mathbf{p},\boldsymbol{\Xi}} \Psi_{\mathbf{i},\mathbf{p},\boldsymbol{\Xi}}(\mathbf{X}),
\label{6.20a}\\
C_{\mathbf{i},\mathbf{p},\boldsymbol{\Xi}} := &
\int_{\mathbb{A}^N} y(\mathbf{x})
\Psi_{\mathbf{i},\mathbf{p},\boldsymbol{\Xi}}(\mathbf{x}) f_{\mathbf{X}}(\mathbf{x})d\mathbf{x},
\label{6.20b}
\end{alignat}
\end{subequations}
involving a complete set of multivariate orthonormal B-splines $\{\Psi_{\mathbf{i},\mathbf{p},\boldsymbol{\Xi}}(\mathbf{X}): \mathbf{i} \in \mathcal{I}_{\mathbf{n}}\}$ in $\mathbf{X}$ and associated coefficients $C_{\mathbf{i}}$, $\mathbf{i} \in \mathcal{I}_{\mathbf{n}}$.  The last expression in \eqref{6.20a} with the coefficients in \eqref{6.20b} represents the concomitant SCE approximation of $y(\mathbf{X})$.  Therefore, when $S=N$, the $\mathbf{p}$th-degree SDD is the same as the $\mathbf{p}$th-degree SCE.
\\

\item
For the second case, let $S<N$.  Given $k=1,\ldots,N$, $0 \le p_k < \infty$, and an interval $[a_k,b_k] \subset \mathbb{R}$, denote by
\begin{equation}
\boldsymbol{\xi}_k^{'}=\{\overset{p_k+1~\mathrm{times}}{\overbrace{a_k,\ldots,a_k}},
\overset{p_k+1~\mathrm{times}}{\overbrace{b_k,\ldots,b_k}}\}
\label{6.21}
\end{equation}
a $(p_k+1)$-open knot sequence with no internal knots, so that  $\boldsymbol{\Xi}^{'}=\{\boldsymbol{\xi}_1^{'},\ldots,\boldsymbol{\xi}_N^{'}\}$.  For the knot sequence $\boldsymbol{\xi}_k^{'}$ in \eqref{6.21}, the resulting B-splines are related to the well-known Bernstein polynomials of degree $p_k$.  Since the set of Bernstein polynomials of degree $p_k$ forms a basis of the polynomial space $\Pi_{p_k}$, the spline space $\mathcal{S}_{k,p_k,\boldsymbol{\xi}_k^{'}}=\Pi_{p_k}$.  Then, going through the same dimensionwise tensor-product construction, it is trivial to show that, indeed, the multivariate spline space $\mathcal{S}_{S,\mathbf{p},\boldsymbol{\Xi}^{'}}$ is spanned by a set of hierarchically ordered multivariate orthonormal polynomials $\{\Psi_{\mathbf{i}_u}^u(\mathbf{x}_u): \emptyset \ne u \subseteq \{1,\ldots,N\},
\boldsymbol{0} \le \mathbf{i}_u \le \mathbf{p}_u \}$ in $\mathbf{x}_u$, resulting in
\begin{subequations}
\begin{alignat}{2}
y_{S,\mathbf{p},\boldsymbol{\Xi}^{'}}(\mathbf{X}) = &
\displaystyle
~y_\emptyset +
\sum_{\substack{\emptyset \ne u \subseteq \{1,\ldots,N\} \\ 1 \le |u| \le S}}~
\sum_{\boldsymbol{0} \le \mathbf{i}_u \le \mathbf{p}_u}
C_{\mathbf{i}_u}^u \Psi_{\mathbf{i}_u}^u(\mathbf{X}_u),
\label{6.22a} \\
C_{\mathbf{i}_u}^u = &
\int_{\mathbb{A}^N} y(\mathbf{x})
\Psi_{\mathbf{i}_u}^u(\mathbf{x}_u) f_{\mathbf{X}}(\mathbf{x})d\mathbf{x}.
\label{6.22b}
\end{alignat}
\end{subequations}
Hence, the $S$-variate SDD approximation in \eqref{6.22a} with coefficients derived from \eqref{6.22b},
associated with a specified degree $\mathbf{p}$, is identical to the $S$-variate PDD approximation with the same degree $\mathbf{p}$.
\\

\item
Finally, set the condition $S=N$ in the second case.  Then the SDD approximation for a specified degree $\mathbf{p}$ becomes
\begin{subequations}
\begin{alignat}{2}
y_{N,\mathbf{p},\boldsymbol{\Xi}^{'}}(\mathbf{X}) = & ~y_{\mathbf{p},\boldsymbol{\Xi}^{'}}(\mathbf{X}) =
\displaystyle
\sum_{\boldsymbol{0} \le \mathbf{i} \le \mathbf{p}}
C_{\mathbf{i}} \Psi_{\mathbf{i}}(\mathbf{X}),
\label{6.23a} \\
C_{\mathbf{i}} = &
\int_{\mathbb{A}^N} y(\mathbf{x})
\Psi_{\mathbf{i}}(\mathbf{x}) f_{\mathbf{X}}(\mathbf{x})d\mathbf{x},
\label{6.23b}
\end{alignat}
\end{subequations}
comprising multivariate orthonormal polynomials $\{\Psi_{\mathbf{i}}(\mathbf{X}):
\boldsymbol{0} \le \mathbf{i} \le \mathbf{p} \}$ in $\mathbf{X}$. Hence, the SDD approximation in \eqref{6.23a} with coefficients attained from \eqref{6.23b} reduces to a $\mathbf{p}$th-degree PCE approximation.
\end{enumerate}

\subsection{Output statistics and other probabilistic characteristics}
The truncated SDD approximation $y_{S,\mathbf{p},\boldsymbol{\Xi}}(\mathbf{X})$ can be viewed as a surrogate of $y(\mathbf{X})$.  Therefore, relevant probabilistic characteristics of $y(\mathbf{X})$, including its first two moments and its probability density function, if it exists, can be estimated from the statistical properties of $y_{S,\mathbf{p},\boldsymbol{\Xi}}(\mathbf{X})$.

Applying the expectation operator on $y_{S,\mathbf{p},\boldsymbol{\Xi}}(\mathbf{X})$ in \eqref{6.18} and imposing Proposition \ref{p7}, its mean
\begin{equation}
\mathbb{E}\left[ y_{S,\mathbf{p},\boldsymbol{\Xi}}(\mathbf{X}) \right] = y_\emptyset =
\mathbb{E}\left[ y(\mathbf{X})\right]
\label{6.24}
\end{equation}
is independent of $S$, $\mathbf{p}$, and $\boldsymbol{\Xi}$. More importantly, the SDD approximation always yields the exact mean, provided that the expansion coefficient $y_\emptyset$ is calculated exactly.

Applying the expectation operator again, this time on $[y_{S,\mathbf{p},\boldsymbol{\Xi}}(\mathbf{X})- y_\emptyset]^2$, and employing Proposition \ref{p7} one more time results in the variance
\begin{equation}
\operatorname{var}\left[ y_{S,\mathbf{p},\boldsymbol{\Xi}}(\mathbf{X}) \right] =
\sum_{\substack{\emptyset \ne u \subseteq \{1,\ldots,N\} \\ 1 \le |u| \le S}}
\sum_{\mathbf{i}_u \in  \bar{\mathcal{I}}_{u,\mathbf{n}_u}}
{C_{\mathbf{i}_u,\mathbf{p}_u,\boldsymbol{\Xi}_u}^{u^2}}
\le \operatorname{var}[y(\mathbf{X})]
\label{6.25}
\end{equation}
of $y_{S,\mathbf{p},\boldsymbol{\Xi}}(\mathbf{X})$.  Therefore, the second-moment properties of an SDD approximation in \eqref{6.24} and \eqref{6.25} are solely determined by an appropriately truncated set of expansion coefficients.  The formulae for the mean and variance of the SDD approximation are similar to those reported for the PDD approximation, although the respective expansion coefficients involved are not.  The primary reason for this similarity stems from the use of a hierarchically ordered orthonormal basis in both expansions.

Being convergent in probability and in distribution, the probability distribution function and density function of $y(\mathbf{X})$, if it exists, can also be estimated by Monte Carlo simulation (MCS) of the SDD approximation $y_{S,\mathbf{p},\boldsymbol{\Xi}}(\mathbf{X})$.  With the expansion coefficients calibrated, the simulation of $y_{S,\mathbf{p},\boldsymbol{\Xi}}(\mathbf{X})$ entails inexpensive evaluations of simple spline functions.


\section{Numerical experiments}
Two examples involving two- and five-dimensional output functions of uniformly distributed random variables are presented. The objective is to evaluate the approximation power of SDD in terms of the second-moment statistics and probability distribution of $y(\mathbf{X})$ and compare the new SDD results with those obtained from PCE, PDD, and sparse-grid quadrature.

The coordinate degrees for the SDD approximation are identical, that is, $p_1=\cdots=p_N=p$ (say).  So are the knot sequences for SDD, that is, $\boldsymbol{\xi}_1=\cdots=\boldsymbol{\xi}_N=\boldsymbol{\xi}$ (say) with a mesh comprising the largest element sizes $h_1=\cdots=h_N=h$.  The degree $p$ and/or mesh size $h$ were varied as desired.  The basis for a $p$th-degree PCE or PDD was obtained from an appropriate set of Legendre orthonormal polynomials in input variables, whereas the basis for an SDD, given a degree $p$ and a knot sequence of mesh size $h$, was generated from the Cholesky factorization of the spline moment matrix.  Given the uniform distribution, the spline moment matrix was constructed analytically.  All knot sequences are $(p+1)$-open and consist of uniformly spaced distinct knots with even numbers of elements.  All expansion coefficients in Example 1 were calculated exactly, whereas in Example 2, the coefficients were estimated by least-squares regression. The relative error in Example 1 is defined as the absolute difference between the exact and approximate variances, divided by the exact variance.

\subsection{Example 1: a nonsmooth function}
Defined on the square $\mathbb{A}^2=[-1,1]^2$, consider a nonsmooth function of two uniformly distributed random variables $X_1$ and $X_2$, each distributed over $[-1,1]$:
\begin{equation}
y(X_1,X_2) = g(X_1)+g(X_2) + \frac{1}{5} g(X_1)g(X_2),
\label{7.1}
\end{equation}
where, for $i=1,2$,
\begin{equation}
g(x_i) =
\begin{cases}
1,              & -1 \le x_i \le 0, \\
\exp(-10 x_i),  &  0 <   x_i \le 1.
\end{cases}
\label{7.2}
\end{equation}
A graph of the function in Figure \ref{fig1}(a) indicates that $y$ has a flat region on $[-1,0]^2$, and then it falls off exponentially on both sides.  Clearly, the function is continuous, but it has discontinuous partial derivatives across the lines $x_1=0$ and $x_2=0$.  Such functions are difficult to approximate by polynomials.

Figures \ref{fig2}(b) through  \ref{fig2}(f) present graphs of several approximations of $y(x_1,x_2)$ from PCE and SDD.  The second-order PCE approximation in Figure \ref{fig2}(b) commits a relative variance error
of $0.178781$, and is inadequate. The 20th-order PCE approximation in Figures \ref{fig2}(c) show improvements by reducing the errors to $2.19198\times 10^{-3}$, but not to a magnitude expected from such an impractically high expansion order.

In contrast, the bivariate, linear ($S=2$, $p=1$) and bivariate, quadratic ($S=2$, $p=2$) SDD approximations in Figures \ref{fig1}(d) and \ref{fig1}(e), obtained for a mesh size of $h=1/10$ ($I=20$), match the exact function well, producing respective variance errors of $2.88408\times 10^{-4}$ and $1.28264\times 10^{-3}$.  Clearly, the approximation quality of SDD, whether linear or quadratic, surpasses that of the 20th-degree PDD approximations, yet the quality in Figure \ref{fig2}(e) is inferior to that in \ref{fig2}(d).  This apparent anomaly of a linear SDD approximation producing results cut above a quadratic SDD approximation can be explained by examining the knot sequences used.  Due to even numbers of elements, there exists a central knot in each coordinate direction for the cases of $p=1$ and $p=2$.  However, for $p=2$, the first-order derivatives are continuous across the central knot in both directions.  This is why the quadratic SDD approximations are smoother than the linear SDD approximations or the exact function.  In this case, as $y(x_1,x_2)$ is not differentiable at the central knot, the linear approximation outperforms the quadratic approximation.  Furthermore, if the central knot is repeated (multiplicity of two) in the knot sequences, the quadratic SDD approximation, exhibited in Figure1 \ref{fig2}(f), is even better than the linear SDD approximations, resulting in the variance error of $3.31017\times 10^{-6}$. Any distinction between the quadratic SDD approximation in \ref{fig2}(f) and the exact function $y$ in \ref{fig1}(a) is impalpable to the naked eye.

\begin{figure}[htbp!]
\begin{centering}
\includegraphics[scale=0.71]{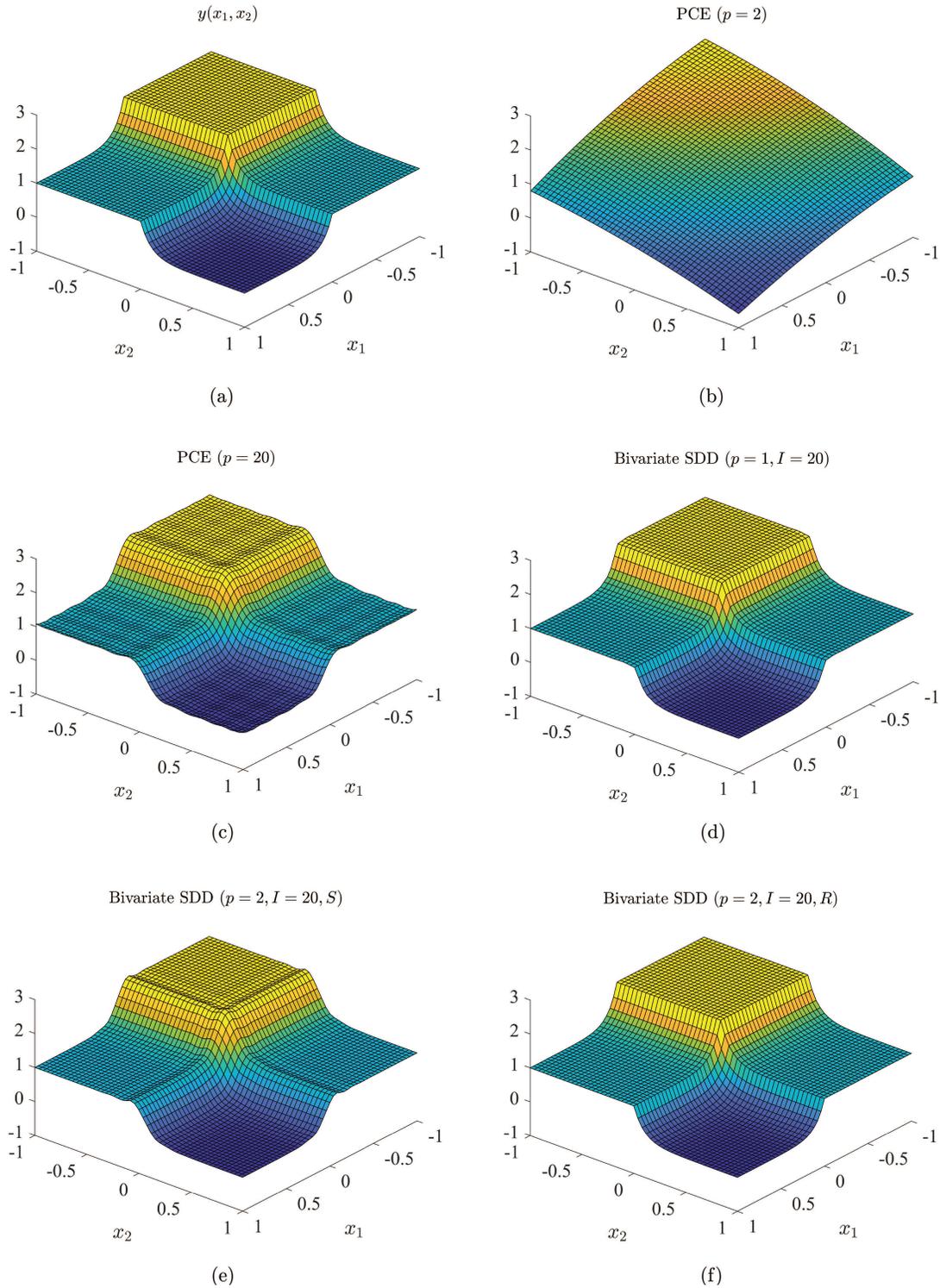}
\par\end{centering}
\caption{Graphs of functions in Example 1;
(a) exact;
(b) second-order PCE;
(c) 20th-order PCE;
(d) bivariate, linear SDD;
(e) bivariate, quadratic SDD with simple (``$S$'') knots;
(f) bivariate, quadratic SDD with repeated (``$R$'') central knots.}
\label{fig2}
\end{figure}

Although SDD enables a greater flexibility than PCE in exploiting low expansion orders, a comparison between PCE and SDD approximations, including the well-studied sparse-grid quadrature, pertaining to their computational efforts is warranted.  Theoretically, such a comparison can be made by examining the total numbers of requisite basis functions from these methods.  This was achieved by assessing (1) PCE approximations for ten distinct values of $p=1,2,4,6,8,10,12,14,16,20$; (2) bivariate SDD approximations for two distinct values of $p=1,2$, and ten distinct mesh sizes of $h=2,1,1/2,1/3,1/4,1/5,1/6,1/7,1/8,1/10$; and (3) sparse grids with the Clenshaw-Curtis quadrature rule for levels varying from one through seven. Note that the number of bases for the sparse-grid method is equivalent to the number of integration points.

Figure \ref{fig3} depicts how the relative error in variance, calculated by various methods, decays against the number of basis functions. From that figure, the sparse grids and PCE methods struggle to provide results as accurate as those obtained by SDD methods of order ($p$) only up to two. This is largely due to the nonsmoothness in the original function $y$.  As explained earlier, the errors are larger for quadratic SDD than for linear SDD, when simple knots are used.  If, however, repeated knots are placed, then the quadratic SDD method becomes markedly more accurate than any other methods. Moreover, the convergence is steeper for quadratic SDD with repeated knots than linear SDD. Overall, the numerical evidence from this example reveals significant contribution of the proposed SDD method in terms of both efficiency and accuracy, while the rate of convergence is also substantially higher for functions with harsh regularities.

\begin{figure}[htbp!]
\begin{centering}
\includegraphics[scale=0.6]{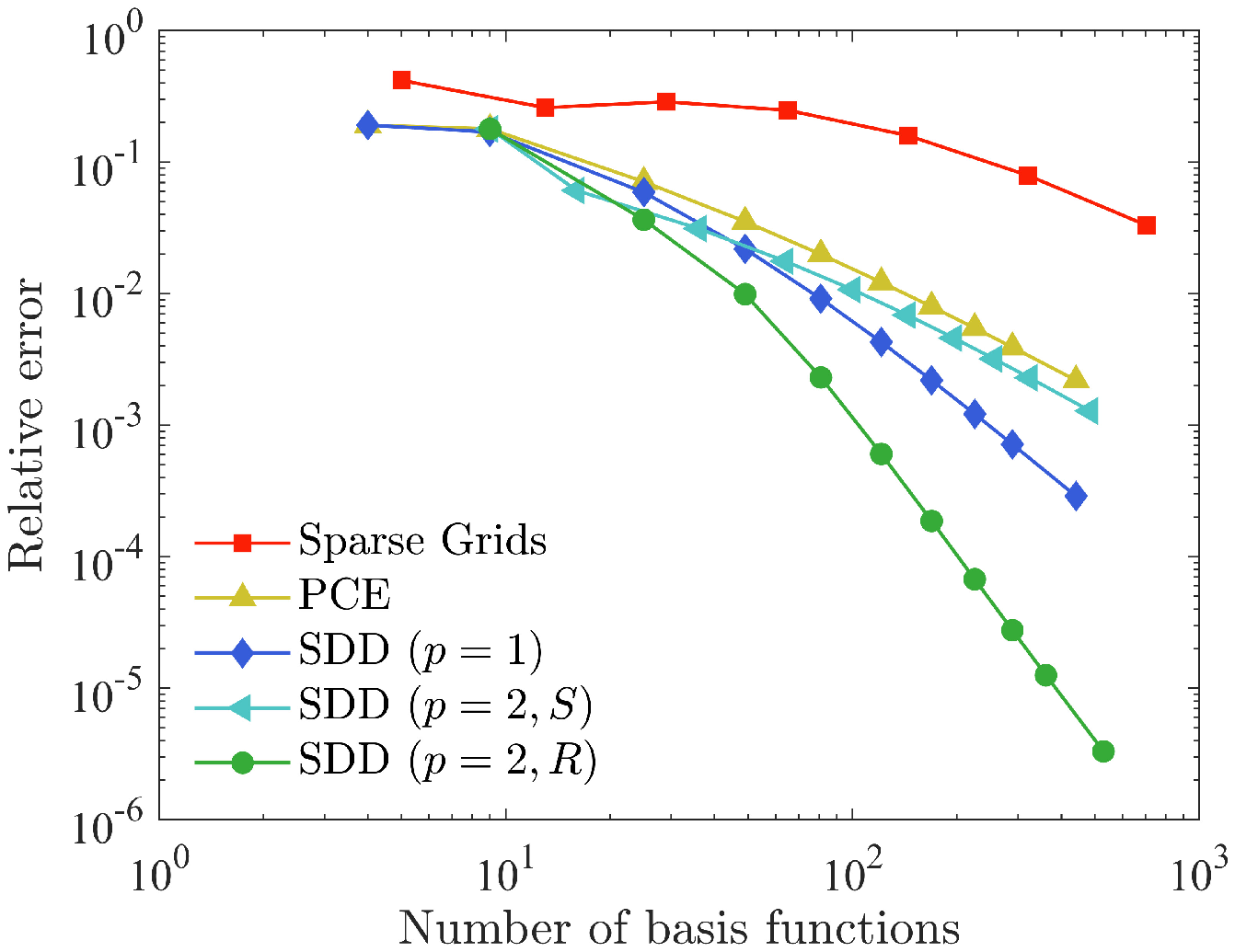}
\par\end{centering}
\caption{Relative errors in the variances from PCE, sparse-grid, and SDD approximations in Example 1. Note: the relative error is defined as the absolute difference between the exact and approximate variances, divided by the exact variance.}
\label{fig3}
\end{figure}

\subsection{Example 2: a cylinder under a pair of concentrated forces}
The second example is concerned with solving a stochastic partial differential equation (PDE) commonly encountered in stochastic mechanics of solids.  A 4-unit-long ($L=4$) open cylinder with both ends fixed in the longitudinal direction ($z$-axis) is subjected to a pair of collinear, concentrated pulling or pushing forces $F$ in the $y$ direction, as shown in Figure \ref{fig4}(a). A positive or negative value of $F$ represents pulling or pushing forces, respectively. There are five independent and uniformly distributed random variables in this problem: mean radius $R$, thickness $t$, Young's modulus $E$, Poisson's ratio $\nu$, and concentrated force $F$, with their statistical properties described in Table \ref{table1}.  Given the input random vector $\mathbf{X}=(R,t,E,\nu,F)$, the underlying stochastic PDE calls for finding the displacement $\mathbf{u}(\mathbf{z};\mathbf{X})$ and stress $\boldsymbol{\sigma}(\mathbf{z};\mathbf{X})$ solutions at a spatial coordinate $\mathbf{z}=(z_1,z_2,z_3) \in \mathcal{D} \subset \mathbb{R}^3$, satisfying $P$-almost surely
\begin{equation}
\begin{array}{rcl}
\mathbf{\nabla} \cdot \boldsymbol{\sigma}(\mathbf{z};\mathbf{X})
&  =  &  \boldsymbol{0}~\text{in}~\mathcal{D} \subset \mathbb{R}^3, \\
\boldsymbol{\sigma}(\mathbf{z};\mathbf{X}) \cdot \mathbf{n}(\mathbf{z};\mathbf{X}),
&  =  &  \bar{\mathbf{t}}(\mathbf{z};\mathbf{X})~\text{on}~\partial \mathcal{D}_t, \\
\mathbf{u}(\mathbf{z};\mathbf{X})
&  =  &  \bar{\mathbf{u}}(\mathbf{z};\mathbf{X})~\text{on}~\partial \mathcal{D}_u,
\end{array}
\label{7.5}
\end{equation}
such that
$
\partial \mathcal{D}_t \cup \partial \mathcal{D}_u = \partial \mathcal{D},~
\partial \mathcal{D}_t \cap \partial \mathcal{D}_u = \emptyset.
$
Here, $\mathbf{\nabla}:=(\partial/\partial z_1,\partial/\partial z_2,\partial/\partial z_3)$,
$\bar{\mathbf{t}}(\mathbf{z};\mathbf{X})$ is prescribed traction on $\partial \mathcal{D}_t$,
$\bar{\mathbf{u}}(\mathbf{z};\mathbf{X})$ is prescribed displacement on $\partial \mathcal{D}_u$, and
$\mathbf{n}(\mathbf{z};\mathbf{X})$ is unit outward normal vector.  A finite-element analysis (FEA) model, comprising 1152 eight-noded, linear, hexahedral elements (two elements in the radial direction, 48 elements in the circumferential direction, and 12 elements in the longitudinal direction) illustrated in Figure \ref{fig4}(b), was developed to solve the associated Galerkin weak form of \eqref{7.5}.  The objective of this example is to estimate, using the FEA computational model, various probabilistic characteristics of the displacement field of the cylinder.

\begin{figure}[htbp]
\begin{centering}
\includegraphics[scale=0.72, clip=true]{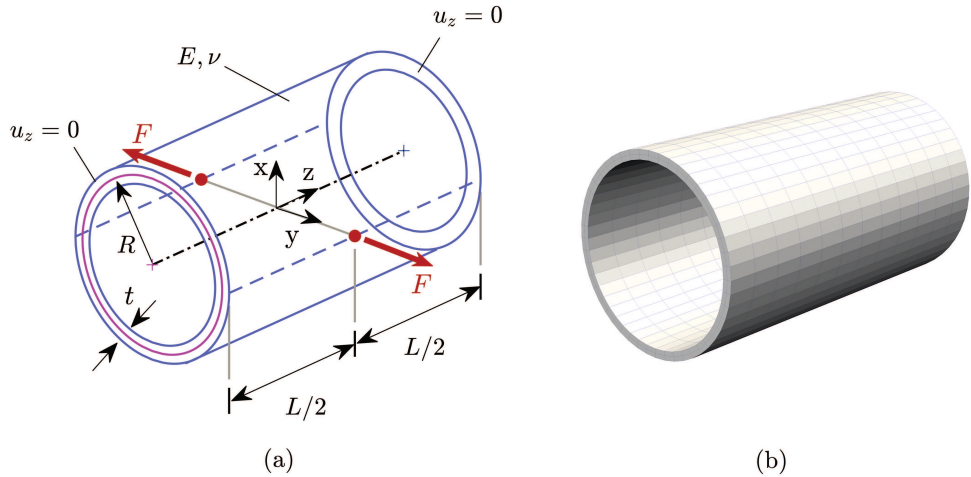}
\par\end{centering}
\caption{A cylinder subject to a pair of pulling or pushing forces;  (a) geometry and loads;
(b) FEA model.}
\label{fig4}
\end{figure}

\begin{table}[htbp]
    \begin{minipage}{.5\linewidth}
\caption{Statistical properties of random input in Example 2.}
  \begin{tabular}{cccc}
\hline
{\footnotesize{}$\begin{array}{c}
\mathrm{Random}\\
\mathrm{variable}
\end{array}$} & {\footnotesize{}Mean} & {\footnotesize{}$\begin{array}{c}
\mathrm{St.}\\
\mathrm{dev.}
\end{array}$} & {\footnotesize{}$\begin{array}{c}
\mathrm{Bounds\,of}\\
\mathrm{distribution}
\end{array}$}\tabularnewline
\hline
{\footnotesize{}$R$} & {\footnotesize{}1} & {\footnotesize{}0.0121} & {\footnotesize{}$[0.979,1.021]$}\tabularnewline
{\footnotesize{}$t$} & {\footnotesize{}0.1} & {\footnotesize{}0.0012} & {\footnotesize{}$[0.098,0.102]$}\tabularnewline
{\footnotesize{}$E$} & {\footnotesize{}1} & {\footnotesize{}0.0577} & {\footnotesize{}$[0.9,1.1]$}\tabularnewline
{\footnotesize{}$\nu$} & {\footnotesize{} 1/3} & {\footnotesize{}0.0096} & {\footnotesize{}$[0.95/3,1.05/3]$}\tabularnewline
{\footnotesize{}$F^{(\mathrm{a})}$} & {\footnotesize{}-0.0005} & {\footnotesize{}0.00057735} & {\footnotesize{}$[-0.0015,0.0005]$}\tabularnewline
\hline
\end{tabular}{\footnotesize\par}
{\scriptsize{}(a)
Positive $F$ = pulling force; negative $F$ = pushing forces.}{\scriptsize\par}
\label{table1}
    \end{minipage}%
    \begin{minipage}{.5\linewidth}
      \centering
\caption{Statistical properties of $\Delta u_{y,F}$ by various methods in Example 2.}
\begin{tabular}{ccc}
\hline
{\footnotesize{}Method} & {\footnotesize{}Mean} & {\footnotesize{}St. dev.}\tabularnewline
\hline
{\footnotesize{}Bivariate, first-order PDD} & {\footnotesize{}0.217536503} & {\footnotesize{}0.135859716}\tabularnewline
{\footnotesize{}Bivariate, fifth-order PDD} & {\footnotesize{}0.215657135} & {\footnotesize{}0.152458926}\tabularnewline
{\footnotesize{}Bivariate, first-order SDD} & {\footnotesize{}0.215667677} & {\footnotesize{}0.152458835}\tabularnewline
{\footnotesize{}Crude MCS (50,000 samples)} & {\footnotesize{}0.215366279} & {\footnotesize{}0.151960153}\tabularnewline
\hline
\end{tabular}
\label{table2}
    \end{minipage}
\end{table}

Two PDD and one SDD methods were employed to calculate the second-moment statistics of displacements: (1) the bivariate, first-order ($S=2$, $p=1$) PDD; (2) the bivariate, fifth-order ($S=2$, $p=5$) PDD; and (3) the bivariate, linear ($S=2$, $p=1$) SDD with the eight subintervals in each coordinate direction ($I=8$). All orthogonal polynomials or splines involved were determined analytically.  However, unlike in Example 1, the expansion coefficients of PDD or SDD here cannot be calculated exactly.  Alternatively, a linear regression analysis was performed to estimate the coefficients in two steps:  (1) draw $L_r \in \mathbb{N}$ samples of random input $\mathbf{X}$ from their probability distributions and hence calculate for each input sample the corresponding output sample of displacement from the FEA model and (2) conduct a least-squares best fit of the PDD or SDD approximation to $L_r$ pairs of input-output samples.  Therefore, the PDD and SDD approximations in this example contain not only projection errors, but also numerical errors due to regression.  Nonetheless, all PDD and SDD approximations require the same computational effort and are proportional to $L_r$, as all sample calculations (FEA) require practically the same effort.  A sample size of $L_r=3000$ was chosen for regression and deemed adequate for this problem.

\subsubsection{Second-moment characteristics}
The behavior of a pulled or pushed cylinder is often evaluated by examining the relative radial displacement of the two load points.  Denote by $\Delta u_{y,F}$ the absolute value of such relative displacement.  Here, $\Delta u_{y,F}$ is an output random variable of interest that depends on the random input $\mathbf{X}$.  More importantly, since $F$ takes on positive (pull) and negative (push) values, $\Delta u_{y,F}$ is a nonsmooth function of input variables.  Table \ref{table1} lists the mean and standard deviation of $\Delta u_{y,F}$ calculated by the three aforementioned methods and a crude MCS.  Due to the computational expense of FEA, the MCS was conducted for a sample size of 50,000, which should be adequate for providing benchmark solutions of the second-moment characteristics.  The agreement between the means by PDD or SDD approximations and MCS in Table \ref{table2} is very good. However, the SDD approximation is more accurate than the first-order PDD approximation when estimating the standard deviation.  The PDD method becomes competitive only when used in conjunction with the fifth-order approximation.  This is attributed to the nonsmooth response behavior where a low-order SDD approximation can be adapted to produce more accurate results than those possible by a low-order PDD approximation.

A similar second-moment analysis was performed for the root-mean-square (RMS) values ($\sqrt{u_x^2+u_y^2+u_z^2}$) of three displacement components ($u_x, u_y, u_z$) at all finite-element nodes.  Figures \ref{fig5}(a) through \ref{fig5}(d) portray  contour plots of the standard deviations of RMS displacements obtained from MCS and three approximate methods defined in the preceding.  When comparing the PDD results in Figures \ref{fig5}(b) and \ref{fig5}(c) with the MCS-generated solution in Figure \ref{fig5}(a), the first-order approximation is unsatisfactory, although the fifth-order approximation produces the standard deviations well.  Indeed, there are discernable differences in the contour plots obtained for the first-order PDD approximation and MCS solution. In contrast, the standard deviations by the bivariate, linear SDD approximation in Figure \ref{fig5}(d) are remarkably close to the MCS result in Figure \ref{fig5}(a).  Therefore, a first-order SDD approximation with only eight subintervals ($I=8$) demonstrates the superiority of SDD over PDD approximations.

\begin{figure}[htbp]
\begin{centering}
\includegraphics[scale=0.7,clip=true]{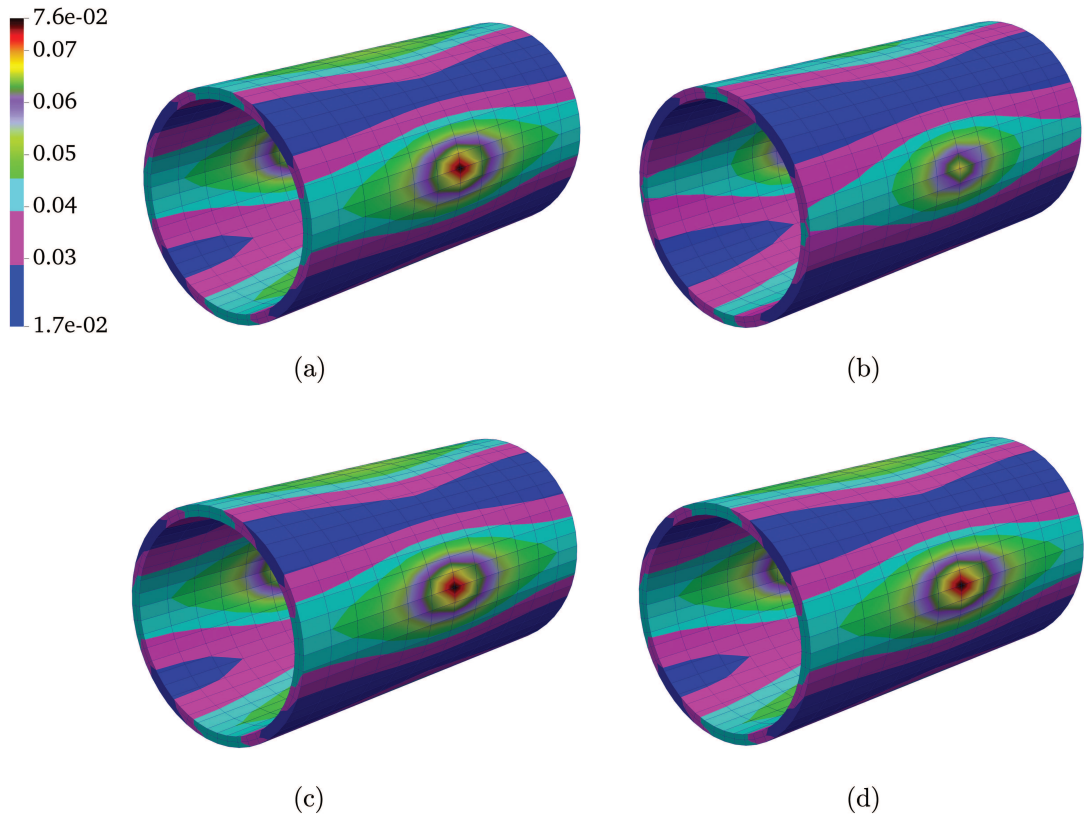}
\par\end{centering}
\caption{Contour plots of standard deviations of RMS displacements of the cylinder by various methods in Example 2;
(a) crude MCS (50,000 samples);
(b) bivariate, first-order ($S=2$, $p=1$) PDD;
(c) bivariate, fifth-order ($S=2$, $p=5$) PDD;
(d) bivariate, linear ($S=2$, $p=1$, $I=8$) SDD.}
\label{fig5}
\end{figure}

\subsubsection{Probability distribution}
Figure \ref{fig6} illustrates the distribution functions of $\Delta u_{y,F}$ obtained by the aforementioned methods, including MCS. The same 50,000 samples generated for verifying the statistics in Table \ref{table2} were utilized for MCS estimates in Figure 5. However, since a PDD or SDD yields an explicit approximation of $\Delta u_{y,F}$ in terms of multivariate polynomials or splines, a relatively large sample size -- a million in this example -- was selected to sample the approximation for estimating the corresponding distribution function.  According to Figure \ref{fig6}, the distribution functions by the linear SDD approximation and MCS match extremely well over the entire support of $\Delta u_{y,F}$.  In contrast, the first- and fifth-order PDD approximations produce satisfactory estimates of the distribution function only around the mean; however, in the tail (lower) region, there are significant discrepancies.  Additionally, the distribution function obtained from a tenth-order approximation, similarly depicted in Figure \ref{fig6}, does not bring a tangible improvement to PDD.  It is interesting to note that a fifth-order PDD approximation, while it produces satisfactory second-moment properties, may not necessarily accurately capture tails of distribution functions.  This is because higher-order moments are involved when estimating distribution functions.

\begin{figure}[htbp]
\begin{centering}
\includegraphics[scale=0.45,clip=true]{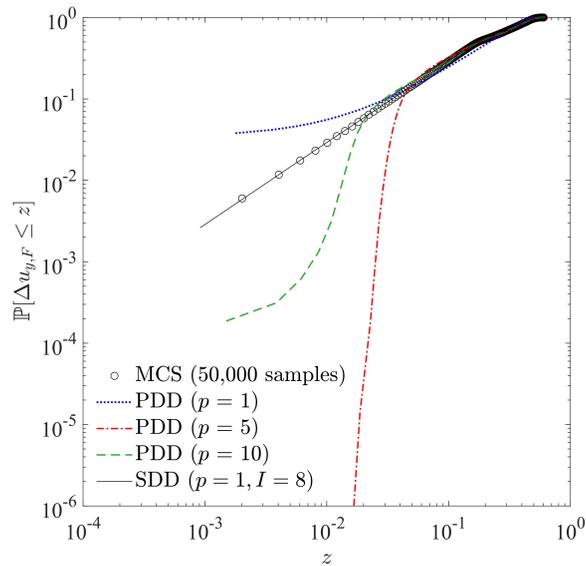}
\par\end{centering}
\caption{Cumulative probability distribution function of $\Delta u_{y,F}$ by various methods in Example 2.}
\label{fig6}
\end{figure}

It is important to clarify that the probabilistic results presented in this example should be viewed in the context of a fixed FEA discretization of the cylinder.  Moreover, as no meaningful differences were found in the results of trivariate PDD or SDD approximations, their results are not reported here.

\section{Application}
This section illustrates the effectiveness of the proposed SDD method in solving a large-scale practical engineering problem. It involves predicting the dynamic behavior of a sport utility vehicle (SUV) in terms of the statistical properties of natural frequencies and mode shapes.

\subsection{An SUV body-in-white model}
An SUV body-in-white (BIW) model, referring to the automotive design stage where only a car body's sheet metal components have been welded together, is presented in Figure \ref{fig7}(a).  It consists of the bare metal shell of the frame body, including fixed windshields. An FEA mesh comprising 127,213 linear shell elements and 794,292 active degrees of freedom is depicted in \ref{fig7}(b).  As displayed in Figure \ref{fig7}(a), the BIW model contains 17 distinct materials having random properties, including 17 Young's moduli and 17 mass densities. In total, there are 34 random variables as follows: $X_{1}$ to $X_{17}$ = Young's moduli of materials 1 to 17; and $X_{18}$ to $X_{34}$ = mass densities of materials 1 to 17. Their means, $\mu_{i}:=\mathbb{E}\left[X_{i}\right]$, $i=1,\ldots,34$, are listed in Table \ref{table3}. Each variable follows an independent, truncated Gaussian distribution with lower limit $a_{i}=0.8\mu_{i}$, upper limit $b_{i}=1.2\mu_{i}$, and coefficient of variation $v_{i}=0.1$. The deterministic Poisson's ratios are as follows: 0.28 for materials 1 through 13; 0.2 for materials 14 and 15; and 0.3 for materials 16 and 17.  This is an undamped system. Therefore, the damping factors for all materials are equal to \emph{zero}.
\begin{figure}[htbp]
\begin{centering}
\includegraphics[scale=1]{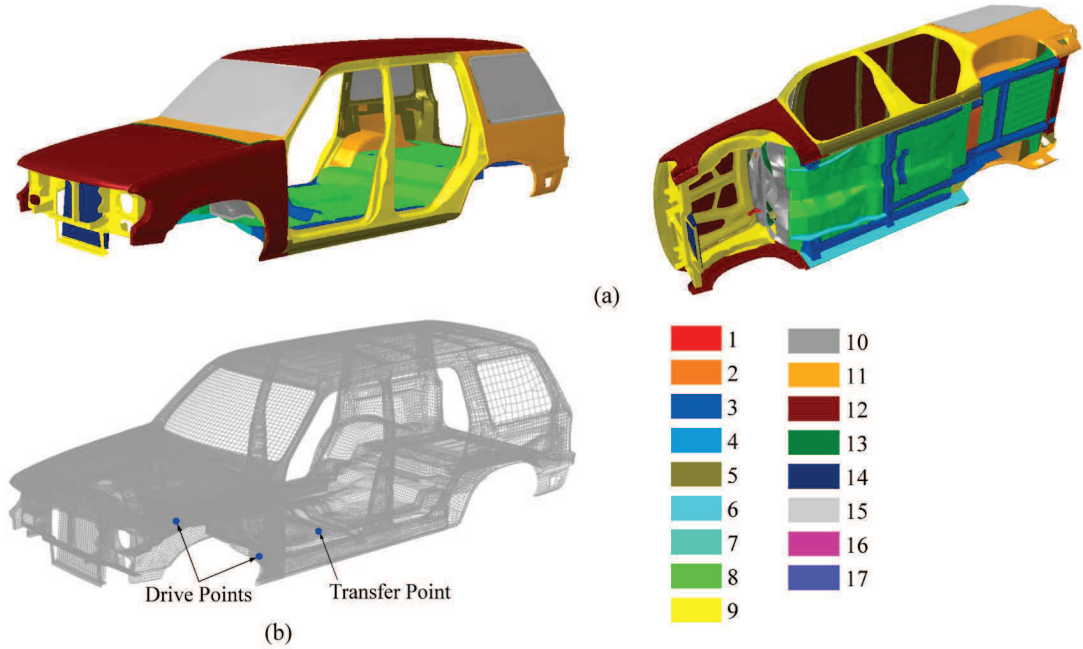}
\par\end{centering}
\caption{Steady-state dynamic analysis of an SUV \cite{yadav13}: (a) BIW model; (b) FEA mesh.}
\label{fig7}
\end{figure}

\begin{table}[htbp]
\footnotesize
\caption{Mean values of random input variables for the SUV BIW model.}
\begin{centering}
\begin{tabular}{ccccccc}
\hline
         & Young's    & Mass density,           & &           & Young's    & Mass density,       \tabularnewline
Material & mod.,\,GPa & kg/$\mathrm{m}^{3}$     & &  Material & mod.,\,GPa & kg/$\mathrm{m}^{3}$ \tabularnewline
\hline
1 & 207 & 9500   & & 10 & 207 & 29,260  \tabularnewline
2 & 207 & 9500   & & 11 & 207 & 30,930  \tabularnewline
3 & 207 & 8100   & & 12 & 207 & 37,120  \tabularnewline
4 & 207 & 29,260 & & 13 & 207 & 52,010  \tabularnewline
5 & 207 & 29,260 & & 14 & 69  & 2700    \tabularnewline
6 & 207 & 37,120 & & 15 & 69  & 2700    \tabularnewline
7 & 207 & 9500   & & 16 & 20  & 1189    \tabularnewline
8 & 207 & 8100   & & 17 & 200 & 1189    \tabularnewline
9 & 207 & 8100   & &    &     &         \tabularnewline
\hline
\end{tabular}
\par\end{centering}
\label{table3}
\end{table}

\begin{table}[htbp]
\footnotesize
\caption{Second-moment statistics of first ten natural frequencies of the BIW model by various methods.}
\begin{centering}
\begin{tabular}{cccccccccc}
\hline
 & \multicolumn{6}{c}{$\mathrm{Univariate\,SDD\,methods\,(data\,size=500\,samples)}$} &  & \multicolumn{2}{c}{}\tabularnewline
\cline{2-7}
 & \multicolumn{2}{c}{$p=1,I=2$} & \multicolumn{2}{c}{$p=1,I=4$} & \multicolumn{2}{c}{$p=2,I=4$} &  & \multicolumn{2}{c}{$\begin{array}{c}
\mathrm{Crude\,MCS}\\
\mathrm{(5000\,samples)}
\end{array}$}\tabularnewline
\cline{2-7} \cline{9-10}
 & Mean & St. dev.  & Mean & St. dev. & Mean & St. dev. &  & Mean & St. dev.\tabularnewline
Mode & (Hz) & (Hz) & (Hz) & (Hz) & (Hz) & (Hz) &  & (Hz) & (Hz)\tabularnewline
\hline
1 & 2.2964 & 0.1422 & 2.2965 & 0.1425 & 2.2965 & 0.1426 &  & 2.2970 & 0.1438\tabularnewline
2 & 6.3203 & 0.3784 & 6.3210 & 0.3785 & 6.3207 & 0.3784 &  & 6.3189 & 0.3795\tabularnewline
3 & 8.2239 & 0.4158 & 8.2186 & 0.4234 & 8.2209 & 0.4358 &  & 8.2230 & 0.4360\tabularnewline
4 & 8.3987 & 0.5015 & 8.3985 & 0.5025 & 8.3981 & 0.5033 &  & 8.3911 & 0.4989\tabularnewline
5 & 8.9711 & 0.4170 & 8.9781 & 0.4367 & 8.9762 & 0.4458 &  & 8.9657 & 0.4787\tabularnewline
6 & 10.9581 & 0.5235 & 10.9574 & 0.5247 & 10.9568 & 0.5267 &  & 10.9570 & 0.5359\tabularnewline
7 & 13.2654 & 0.7706 & 13.2654 & 0.7728 & 13.2657 & 0.7770 &  & 13.2554 & 0.7681\tabularnewline
8 & 13.3114 & 0.8055 & 13.3109 & 0.8064 & 13.3124 & 0.8080 &  & 13.3053 & 0.8051\tabularnewline
9 & 14.8789 & 0.6353 & 14.8867 & 0.6693 & 14.8842 & 0.6816 &  & 14.8899 & 0.7009\tabularnewline
10 & 15.7455 & 0.6074 & 15.7432 & 0.6210 & 15.7349 & 0.6308 &  & 15.7504 & 0.6385\tabularnewline
\hline
\end{tabular}
{\footnotesize\par}
\par\end{centering}
\label{table4}
\end{table}

\subsection{Modal dynamics and results}
A mode-based steady-state dynamic analysis was performed to obtain eigensolutions of the BIW model.  Such analysis is often required to determine the frequency response functions, \emph{e.g.}, receptance, mobility, and inertance, resulting in a vehicle body design with desired dynamic characteristics \cite{yadav13}. The Lanczos method \cite{lanczos50} available in ABAQUS (Version 2019) \cite{abaqus2019} was employed for calculating several natural frequencies and mode shapes.

Due to uncertainty in material properties, the eigensolutions of the SUV BIW model are stochastic. Three univariate SDD methods, two with linear approximations ($p=1$, $I=2,4$) and one with a quadratic approximation ($p=2$, $I=4$), and crude MCS entailing 5000 samples were employed to find their second-moment properties. The expansion coefficients of SDD were again estimated by least-squares regression from a MCS-generated experimental design consisting of only 500 samples (FEA).

Table \ref{table4} lists the means and standard deviations of natural frequencies of the BIW model associated with the first ten non-rigid-body modes, obtained using the aforementioned SDD methods and MCS.  A comparison between respective statistics of frequencies by these methods point to a very good accuracy of all three SDD methods.  For instance, the errors in the standard deviations of the tenth natural frequency calculated by the three SDD methods when compared with MCS are $4.9\%$, $2.7\%$ and $1.2\%$, respectively.  The precision of the quadratic SDD approximation with four subintervals is especially high, yet it still requires only 500 samples (FEA).

In addition, the $L^2$-norm, that is, the square root of sum of squares, of nodal displacements at each node of the BIW model representing the magnitude of mode shape was calculated. Figures \ref{fig8}(a) through \ref{fig8}(d) portray contour plots of standard deviations of the magnitude of an arbitrarily selected 15th mode shape (non-rigid-body), calculated using crude MCS and three SDD methods.  Similar results can be generated for other mode shapes if desired. Again, the SDD-generated mode shapes in Figures \ref{fig8}(b) through \ref{fig8}(d) are remarkably close to those generated by crude MCS in Figure \ref{fig8}(a).  For both natural frequencies and mode shapes, the SDD methods deliver very accurate second-moment statistics, asking for only ten percent of the computational effort mandated by crude MCS.  Indeed, the success of conducting UQ analysis for the SUV BIW model comprising 34 random variables demonstrates the viability of SDD in solving industrial-scale engineering problems.
\begin{figure}[htbp]
\begin{centering}
\includegraphics[scale=0.83]{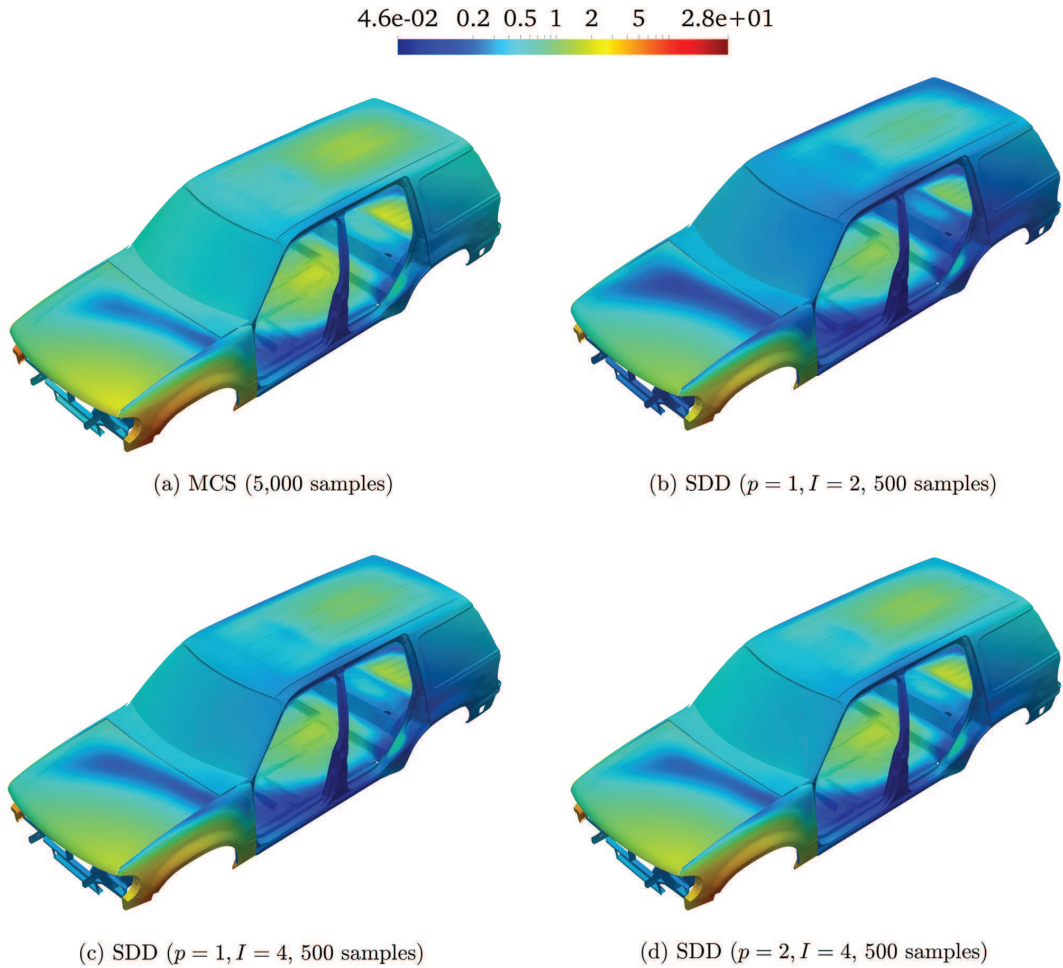}
\par\end{centering}
\caption{
Contour plots of standard deviations of the 15th mode shape of the BIW model in logarithmic scale by various methods;
(a) crude MCS (5000 samples);
(b) univariate SDD ($p=1$, $I=2$);
(c) univariate SDD ($p=1$, $I=4$);
(d) univariate SDD ($p=2$, $I=4$).
}
\label{fig8}
\end{figure}

\section{Concluding remarks}
A new dimensional decomposition, referred to as SDD, is introduced for high-dimensional uncertainty quantification analysis of complex systems, including those featuring nonsmooth functions, subject to independent but otherwise arbitrary probability measures of input random variables. From the decomposition, a square-integrable output random variable is expanded in terms of hierarchically ordered measure-consistent multivariate orthonormal B-splines in input random variables.  A dimensionwise splitting of appropriate spline spaces into orthogonal subspaces, each spanned by a reduced set of measure-consistent orthonormal B-splines, was configured, resulting in SDD of a general $L^2$-function of input variables. Under the prescribed assumptions, the set of measure-consistent orthonormal B-splines forms a basis of each subspace, leading to an orthogonal sum of such sets of basis functions, including the constant subspace, to span the space of all splines. The approximation quality of the expansion was demonstrated in terms of the modulus of smoothness of the function, resulting in the mean-square convergence of SDD to the correct limit. The weaker modes of convergence, such as those in probability and in distribution, transpire naturally. The optimality of SDD, including deriving a few existing expansions as special cases of SDD, was affirmed.  A truncated SDD, obtained by retaining terms associated with a chosen degree of interaction, contains products of univariate splines determined by the same degree of interaction.  Therefore, the computational complexity of a truncated SDD is polynomial, as opposed to exponential, thus deflating the curse of dimensionality to the magnitude possible.  Analytical formulae are proposed to calculate the mean and variance of an SDD approximation for a general output random variable in terms of the expansion coefficients.

Numerical results entailing nonsmooth functions indicate that a low-order SDD approximation equipped with an adequate mesh size is capable of estimating the second-moment properties or probability distributions that are as or more accurate than those obtained from a high-order PCE, PDD, and sparse-grid approximations. Finally, the success of conducting UQ analysis for a 34-dimensional SUV BIW system demonstrates the practicality of the SDD method in solving industrial-scale engineering problems.

\appendix
\section{Knot sequence}
Let $\mathbf{x}=(x_1,\ldots,x_N)$ be an arbitrary point in $\mathbb{A}^N$.  For the coordinate direction $k$, $k=1,\ldots,N$, define a non-negative integer $p_k \in \mathbb{N}_0$ and a positive integer $n_k \ge (p_k+1) \in \mathbb{N}$, representing the degree and total number of basis functions, respectively. In order to define B-splines, the concept of knot sequence, also referred to as knot vector, for each coordinate direction $k$ is needed.

A knot sequence $\boldsymbol{\xi}_k$ for the interval $[a_k,b_k] \subset \mathbb{R}$, given $n_k > p_k \ge 0$, is a vector comprising non-decreasing sequence of real numbers
\begin{equation}
\begin{array}{c}
\boldsymbol{\xi}_k:={\{\xi_{k,i_k}\}}_{i_k=1}^{n_k+p_k+1}=
\{a_k=\xi_{k,1},\xi_{k,2},\ldots,\xi_{k,n_k+p_k+1}=b_k\},   \\
\xi_{k,1} \le \xi_{k,2} \le \cdots \le \xi_{k,n_k+p_k+1}, \rule{0pt}{0.2in}
\end{array}
\label{A.1}
\end{equation}
where $\xi_{k,i_k}$ is the $i_k$th knot with $i_k=1,2,\ldots,n_k+p_k+1$ representing the knot index for the coordinate direction $k$. The elements of $\boldsymbol{\xi}_k$ are called knots.

According to \eqref{A.1}, there are a total of $n_k+p_k+1$ knots, which may be equally or unequally spaced. To monitor knots without repetitions, denote by $\zeta_{k,1},\ldots,\zeta_{k,r_k}$ the $r_k$ distinct knots in $\boldsymbol{\xi}_k$ with respective multiplicities $m_{k,1},\ldots,m_{k,r_k}$. Then the knot sequence in \eqref{A.1} can also be expressed by
\begin{equation}
\begin{array}{c}
\boldsymbol{\xi}_k=\{a_k=\overset{m_{k,1}~\mathrm{times}}{\overbrace{\zeta_{k,1},\ldots,\zeta_{k,1}}},
\overset{m_{k,2}~\mathrm{times}}{\overbrace{\zeta_{k,2},\ldots,\zeta_{k,2}}},\ldots,
\overset{m_{k,r_{k}-1}~\mathrm{times}}{\overbrace{\zeta_{k,r_{k}-1},\ldots,\zeta_{k,r_{k}-1}}},
\overset{m_{k,r_{k}}~\mathrm{times}}{\overbrace{\zeta_{k,r_{k}},\ldots,\zeta_{k,r_{k}}}}=b_k\}, \\
a_k=\zeta_{k,1} < \zeta_{k,2} < \cdots < \zeta_{k,r_k-1} < \zeta_{k,r_k}=b_k,  \rule{0pt}{0.2in}
\end{array}
\label{A.2}
\end{equation}
which consists of a total number of
\[
\sum_{j_k=1}^{r_k} m_{k,j_k} = n_k+p_k+1
\]
knots. As shown in \eqref{A.2}, each knot, whether interior or exterior, may appear $1 \le m_{k,j_k} \le p_k+1$ times, where $m_{k,j_k}$ is referred to as its multiplicity. The multiplicity has important implications on the regularity properties of B-spline functions. A knot sequence is called open if the end knots have multiplicities $p_k+1$. In this case, definitions of more specific knot sequences are in order.

A knot sequence is said to be $(p_k+1)$-open if the first and last knots appear $p_k+1$ times, that is, if
\begin{equation}
\begin{array}{c}
\boldsymbol{\xi}_k=\{a_k=\overset{p_k+1~\mathrm{times}}{\overbrace{\zeta_{k,1},\ldots,\zeta_{k,1}}},
\overset{m_{k,2}~\mathrm{times}}{\overbrace{\zeta_{k,2},\ldots,\zeta_{k,2}}},\ldots,
\overset{m_{k,r_{k}-1}~\mathrm{times}}{\overbrace{\zeta_{k,r_{k}-1},\ldots,\zeta_{k,r_{k}-1}}},
\overset{p_k+1~\mathrm{times}}{\overbrace{\zeta_{k,r_{k}},\ldots,\zeta_{k,r_{k}}}}=b_k\}, \\
a_k=\zeta_{k,1} < \zeta_{k,2} < \cdots < \zeta_{k,r_k-1} < \zeta_{k,r_k}=b_k. \rule{0pt}{0.2in}
\end{array}
\label{A.3}
\end{equation}

A knot sequence is said to be $(p_k+1)$-open with simple knots if it is $(p_k+1)$-open and all interior knots appear only once, that is, if
\begin{equation}
\begin{array}{c}
\boldsymbol{\xi}_k=\{a_k=\overset{p_k+1~\mathrm{times}}{\overbrace{\zeta_{k,1},\ldots,\zeta_{k,1}}},
\zeta_{k,2},\ldots,
\zeta_{k,r_{k}-1},
\overset{p_k+1~\mathrm{times}}{\overbrace{\zeta_{k,r_{k}},\ldots,\zeta_{k,r_{k}}}}=b_k\}, \\
a_k=\zeta_{k,1} < \zeta_{k,2} < \cdots < \zeta_{k,r_k-1} < \zeta_{k,r_k}=b_k. \rule{0pt}{0.2in}
\end{array}
\label{A.5}
\end{equation}

A $(p_k+1)$-open knot sequence with or without simple knots is commonly found in applications \cite{cottrell09}.

\subsection*{Acknowledgments}
The authors thank the anonymous reviewers and the associate editor for providing suggestions to improve the previous version of the paper.

\bibliographystyle{siamplain}
\bibliography{sdd}

\begin{thebibliography}{10}

\bibitem{bellman57}
{\sc R.~Bellman}, {\em Dynamic Programming}, Princeton University Press:
  Princeton, NJ, 1957.

\bibitem{cameron47}
{\sc R.~H. Cameron and W.~T. Martin}, {\em The orthogonal development of
  non-linear functionals in series of {Fourier}-{Hermite} functionals}, Ann.
  Math., 48 (1947), pp.~385--392.

\bibitem{cottrell09}
{\sc J.~A. Cottrell, T.~J.~R. Hughes, and Y.~Bazilevs}, {\em Isogeometric
  Analysis: Toward Integration of CAD and FEA}, John Wiley \& Sons, 2009.

\bibitem{cox72}
{\sc M.~G. Cox}, {\em The numerical evaluation of {B}-splines}, Journal of
  Institute of Mathematics and its Applications, 10 (1972), pp.~134--149.

\bibitem{abaqus2019}
{\sc Dassault Systemes Simulia Corp.}, {\em Abaqus Standard, Version 2019},
  2019.

\bibitem{deboor72}
{\sc C.~De~Boor}, {\em On calculation with {B}-splines}, Journal of
  Approximation Theory, 6 (1972), pp.~50--62.

\bibitem{dixler21}
{\sc S.~Dixler, R.~Jahanbin, and S.~Rahman}, {\em Uncertainty quantification by
  optimal spline dimensional decomposition}, International Journal for
  Numerical Methods in Engineering, 122 (2021), pp.~5898--5934.

\bibitem{efron81}
{\sc B.~Efron and C.~Stein}, {\em The jackknife estimate of variance}, The
  Annals of Statistics, 9 (1981), pp.~pp. 586--596.

\bibitem{foo08}
{\sc J.~Foo, X.~Wan, and G.~Karniadakis}, {\em The multi-element probabilistic
  collocation method {(ME-PCM)}: error analysis and applications}, Journal of
  Computational Physics, 227 (2008), pp.~9572--9595.

\bibitem{gerstner98}
{\sc T.~Gerstner and M.~Griebel}, {\em Numerical integration using sparse
  grids}, Numerical Algorithms, 18 (1998), pp.~209--232.

\bibitem{golub96}
{\sc G.~H. Golub and C.~F. van Loan}, {\em Matrix computations}, The John
  Hopkins University Press, third~ed., 1996.

\bibitem{griebel10}
{\sc M.~Griebel and M.~Holtz}, {\em Dimension-wise integration of
  high-dimensional functions with applications to finance}, J. Complex., 26
  (2010), pp.~455--489.

\bibitem{hoeffding48}
{\sc W.~Hoeffding}, {\em A class of statistics with asymptotically normal
  distribution}, The Annals of Mathematical Statistics, 19 (1948), pp.~pp.
  293--325.

\bibitem{jahanbin20}
{\sc R.~Jahanbin and S.~Rahman}, {\em Stochastic isogeometric analysis in
  linear elasticity}, Computer Methods in Applied Mechanics and Engineering,
  364 (2020), pp.~1--38.
\newblock Article 112928.

\bibitem{kessy16}
{\sc A.~Kessy, A.~Lewin, and K.~Strimmer}, {\em Optimal whitening and
  decorrelation}, The American Statistician, DOI: 10.1080/00031305.2016.1277159
  (2018).

\bibitem{kuo10}
{\sc F.~Y. Kuo, I.~H. Sloan, G.~W. Wasilkowski, and H.~Wozniakowski}, {\em On
  decompositions of multivariate functions}, Mathematics of Computation, 79
  (2011), pp.~953--966.

\bibitem{lanczos50}
{\sc C.~Lanczos}, {\em An iteration method for the solution of the eigenvalue
  problem of linear differential and integral operators}, Journal of Research
  of the National Bureau of Standard, 45 (1950), pp.~255--282.

\bibitem{owen97}
{\sc A.~B. Owen}, {\em Monte {Carlo} variance of scrambled net quadrature},
  SIAM Journal on Numerical Analysis, 34 (1997), pp.~pp. 1884--1910.

\bibitem{piegl97}
{\sc L.~A. Piegl and W.~Tiller}, {\em The NURBS Book, Second Edition},
  Springer-Verlag: Berlin, 1997.

\bibitem{rabitz99}
{\sc H.~Rabitz and O.~Alis}, {\em General foundations of high dimensional model
  representations}, Journal of Mathematical Chemistry, 25 (1999), pp.~197--233.
\newblock 10.1023/A:1019188517934.

\bibitem{rahman14}
{\sc S.~Rahman}, {\em Approximation errors in truncated dimensional
  decompositions}, Mathematics of Computation, 83 (2014), pp.~2799--2819.

\bibitem{rahman14b}
{\sc S.~Rahman}, {\em A generalized {ANOVA} dimensional decomposition for
  dependent probability measures}, SIAM/ASA Journal on Uncertainty
  Quantification, 2 (2014), pp.~670--697.

\bibitem{rahman18}
{\sc S.~Rahman}, {\em Mathematical properties of polynomial dimensional
  decomposition}, SIAM/ASA Journal on Uncertainty Quantification, 6 (2018),
  pp.~816--844.

\bibitem{rahman18b}
{\sc S.~Rahman}, {\em A polynomial chaos expansion in dependent random
  variables}, Journal of Applied Mathematics and Applications, 4 (2018),
  pp.~1--26.

\bibitem{rahman20}
{\sc S.~Rahman}, {\em A spline chaos expansion}, SIAM/ASA Journal on
  Uncertainty Quantification, 8 (2020), pp.~27--57.

\bibitem{schumaker07}
{\sc L.~J. Schumaker}, {\em Spline Functions: Basic Theory}, Cambridge
  University Press: Cambridge, third~ed., 2007.

\bibitem{smith13}
{\sc R.~Smith}, {\em Uncertainty Quantification: Theory, Implementation, and
  Applications}, SIAM: New York, 2013.

\bibitem{smolyak63}
{\sc S.~Smolyak}, {\em Quadrature and interpolation formulas for tensor
  products of certain classes of functions}, Dokl. Akad. Nauk SSSR, 4 (1963),
  pp.~240--243.

\bibitem{sobol03}
{\sc I.~M. Sobol}, {\em Theorems and examples on high dimensional model
  representation}, Reliability Engineering \& System Safety, 79 (2003), pp.~187
  -- 193.

\bibitem{sullivan15}
{\sc T.~J. Sullivan}, {\em Introduction to Uncertainty Quantification},
  Springer: New York, 2015.

\bibitem{wiener38}
{\sc N.~Wiener}, {\em The homogeneous chaos}, American Journal of Mathematics,
  60 (1938), pp.~897--936.

\bibitem{yadav13}
{\sc V.~Yadav and S.~Rahman}, {\em Uncertainty quantification of
  high-dimensional complex systems by multiplicative polynomial dimensional
  decompositions}, International Journal for Numerical Methods in Engineering,
  94 (2013), pp.~221--247.

\end{thebibliography}

\end{document}